\documentclass[12pt]{article}
\usepackage[active]{srcltx}
\usepackage[utf8]{inputenc}
\usepackage[english]{babel}
\usepackage[T1]{fontenc}
\usepackage{lmodern}
\usepackage{amsfonts}
\usepackage{amsmath}
\usepackage{graphicx,epsfig}
\usepackage{amssymb}
\usepackage{theorem,subfigure}
\usepackage[usenames,dvipsnames]{color} 
\usepackage{colortbl}
\definecolor{gray73}{RGB}{186,186,186}
\usepackage{graphicx}
\usepackage{array}
\usepackage{tikz}
\usetikzlibrary{matrix,arrows}
\usepackage{verbatim}
\usepackage{geometry}
\usepackage{multirow}

\usepackage{capt-of}
\usepackage{caption}

\DeclareCaptionLabelFormat{mylabelformat}{#1 #2:}
\captionsetup{aboveskip=0pt, justification=justified, labelsep=space, labelfont=bf, labelformat=mylabelformat, font=footnotesize, skip=1pt, width=150mm}

%
%

\usepackage{hyperref} 
\usepackage[capitalise]{cleveref}
\newcommand\mycom[2]{\genfrac{}{}{0pt}{}{#1}{#2}}
\newtheorem{theorem}{Theorem}[section]

\newtheorem{remark}[theorem]{Remark}

\newtheorem{algorithm}[theorem]{Algorithm}

\newenvironment{proof}[1][Proof]{\noindent\textbf{#1:} }{\ \rule{0.5em}{0.5em}}

\numberwithin{equation}{section}

\newcommand{\kk}{\mathbf k} 

\newcommand{\elll}{\boldsymbol \ell}
\newcommand{\nuu}{\boldsymbol \nu}

\newcommand{\jj}{\mathbf j}

\newcommand{\w}{\mathsf{w}}

\newcommand{\rank}{\mathrm{rank}}

\newcommand{\diag}{\mathrm{diag}}
\renewcommand{\vec}{\mathrm{vec}}

\newcommand{\argmin}{\mathop{\mathrm{argmin}}}

 at12pt
 at9pt


\textwidth17cm
\textheight23cm
\normalbaselineskip=13pt
\hoffset-1cm
\voffset-1cm

\begin{document}
\title{Image reconstruction from structured subsampled 2D Fourier  data}
\author {Gerlind Plonka\footnote{University of G\"ottingen, Institute for Numerical and Applied Mathematics, Lotzestr.\ 16-18,  37083 G\"ottingen, Germany. Email: (plonka,a.riahi)@math.uni-goettingen.de, Corresponding Author: Gerlind Plonka (Orcid: https://orcid.org/0000-0002-3232-0573)} \qquad and \qquad 
Anahita Riahi$^*$
}

\maketitle

\abstract{
In this paper we study the performance of image reconstruction methods from incomplete samples of the 2D discrete Fourier transform. Inspired by requirements in parallel MRI, we  focus on a special sampling pattern with a small number of acquired rows of the Fourier transformed image $\hat{\mathbf A}$. We show the importance of the low-pass set of acquired rows around zero in the Fourier space for image reconstruction. A suitable choice of the width $L$ of this index set depends on the image data and  is crucial to achieve optimal reconstruction results.
We prove that non-adaptive reconstruction approaches cannot lead to satisfying recovery results. We propose a new hybrid algorithm which connects the TV minimization technique based on primal-dual optimization with a recovery algorithm which exploits properties of the special sampling pattern for reconstruction. Our method shows very good performance for natural images as well as for cartoon-like images for a data reduction rate up to $8$ in the complex setting and even $16$ for real images.
}
\smallskip

\noindent
\textbf{Key words.} discrete Fourier transform, interpolation methods, total variation minimization, primal-dual algorithm, local  total variation, incomplete Fourier data\\
\textbf{AMS Subject classifications.} 65T50, 42A38, 42B05, 49M27, 68U10

\section{Introduction}
\label{sec1}
In this paper we study the problem of how to efficiently reconstruct a discrete 
image ${\mathbf A} \in {\mathbb C}^{N \times M}$ from structured undersampled 2D  discrete Fourier transform (DFT) data.

Incomplete Fourier data arise in different application fields, as for example in magnetic resonance imaging (MRI), see 
 e.g.\ \cite{Block07,Feng14,grappa,Keeling,spirit,ESPIRIT,Zhang22}, seismic imaging \cite{Wu21}, or computerized tomography \cite{Beinert22,Karp88,Sezan84}.
Depending on the applications, the patterns of incomplete Fourier measurements have different structures. Discrete subsampling is mostly performed on a two-dimensional  Cartesian grid or on a polar grid in frequency domain.  
Several recovery algorithms focus on the polar grid, see e.g.\ \cite{Block07, Feng14, Knoll11,Lustig07,Yang10} or on (structured) random undersampling on the  Cartesian grid \cite{Gelb16,Candes06,Krahmer14,Muck15,Xiao22}, where in both cases the Fourier sampling pattern is denser for low frequencies.

However, in most applications, a random undersampling is not possible or technically too expensive. For example, in MRI only samples along a line or a smooth curve can be efficiently acquired, and every new line takes additional acquisition time. Moreover, in parallel MRI, one acquires simultaneously subsampled Fourier measurements along lines corresponding to different coils, and the 
challenge is to reconstruct the complete magnetization image from the incomplete Fourier samples of these  coil images, see e.g.\ \cite{Block07,grappa,Hyun18,spirit,MOCCA,ESPIRIT}.

\smallskip

Recovery methods in MRI based on interpolation or approximation of non-aquired data in the Fourier domain as \cite{grappa,spirit}, or on subspace methods \cite{ESPIRIT}, are particularly based on  special sampling patterns  consisting of a bounded number of horizontal (or vertical) lines. 

In the discrete setting, we assume that an image ${\mathbf A}$ has to be recovered  from  a subsampled set of components of its discrete Fourier transform $\hat{\mathbf A}= {\mathbf F}_{N} {\mathbf A}{\mathbf F}_{M}$.
This reconstruction problem is ill-posed, since the Fourier basis is orthonormal, and the reconstruction is therefore not unique. 
In order to still achieve suitable reconstruction results, certain a priori assumptions on the image to be recovered are essential.

Considering  the image in a  vectorized form $\vec({\mathbf A})$, the reconstruction problem can also be rewritten as the problem to find a solution vector of an incomplete linear system, where only a subset of the equations of the system 
$({\mathbf F}_{N} \otimes {\mathbf F}_{M}) \vec({\mathbf A}) = \vec(\hat{\mathbf A})$ is available.
Here, one cannot assume that $\vec({\mathbf A})$ is a sparse vector or has a particularly small $1$-norm. Instead, one natural a priori assumption on the image ${\mathbf A}$ is that it is piecewise smooth.
In most reconstruction methods, this is transferred to a constraint that ${\mathbf A}$ has a small total variation and/or a sparse representation in some wavelet frame. These constraints can be incorporated into a minimization problem to cope for the missing Fourier data. Since the total variation constraint often  produces staircasing artifacts, many papers either focus on numerical examples with piecewise constant images (as ``phantom'' images) or try  to extend or to generalize the constraints on the image, for example by using generalized TV \cite{Knoll11}, special filters \cite{Gelb16,Xiao22}, or sparsity in wavelet frames \cite{Yang10}. 
\smallskip

Inspired by the special requirements  for MRI reconstructions, 
 we study in this paper the reconstruction from subsampled 2D DFT measurements for the special sampling grid which consists  of a fixed number of horizontal lines, i.e., we assume that only a certain number of rows of $\hat{\mathbf A}$ is available to recover ${\mathbf A}$.
We will investigate, how these rows should be taken, in order to achieve very good recovery results based on a suitable reconstruction method.
In particular, we will examine, how the special structure of the considered sampling pattern influences the reconstruction, and how the pattern can be exploited to improve the image recovery. 
We will  survey some currently used methods for image reconstruction based on our sampling pattern and propose a new hybrid method, which outperforms other approaches for natural images as well as cartoon-like images.

\paragraph{Notation and problem description.}
 Throughout the paper we will use  the following notations for index sets.  For $N \in {\mathbb N}$  we consider one-dimensional centered index sets 
$$ \Lambda_N := \left\{ \begin{array}{ll} \{-\frac{N}{2}, -\frac{N}{2}+1, \ldots , \frac{N}{2}-1 \} &  N \, \text{even}, \\[1ex]
\{-\frac{N-1}{2}, -\frac{N-1}{2}+1, \ldots , \frac{N-1}{2} \} & N \, \text{odd}, \end{array} \right. $$
with  $N$ indices
as well as, for even $K \in {\mathbb N}$, index sets containing only every second index, 
\begin{align}\label{setK} \Lambda_K^{(2)} := \{ -K+1, -K+3, \ldots , K-3, K-1 \}, \end{align}
with $K$ indices.
Further, 
$ \Lambda_{N,M} := \Lambda_N \times \Lambda_M$ denotes a two-dimensional index set  with $NM$ indices, 
and  $\Lambda_K^{(2)} \times \Lambda_M$ is a subset of $\Lambda_{2K,M}$ with $KM$ indices, where even-indexed rows of the full set $\Lambda_{2K,M}$ are omitted.
We will use the 2D index notation $\nuu=(\nu_{1}, \nu_{2}) \in \Lambda_{N,M}$ with $\nu_1\in \Lambda_N$, $\nu_2\in \Lambda_M$.

For a given discrete image $
{\mathbf A}=(a_{\mathbf k})_{{\mathbf k} \in \Lambda_{N,M}} = (a_{k_{1},k_{2}})_{k_{1}=-n,k_{2}=-m}^{n-1,m-1} \in {\mathbb C}^{N \times M}$ 
with $N,M$ being even positive integers with  $N=2n$ and $M= 2m$, the discrete two-dimensional Fourier transform is given by
$$ \hat{\mathbf A} = {\mathbf F}_{N} \, {\mathbf A} \, {\mathbf F}_{M}, $$
where ${\mathbf F}_{N} = \frac{1}{\sqrt{N}}(\omega_{N}^{jk})_{j,k=-n}^{n-1}$ and ${\mathbf F}_{M} = \frac{1}{\sqrt{M}} (\omega_{M}^{jk})_{j,k=-m}^{m-1}$ denote the centered unitary Fourier matrices, and $\omega_{N}:= {\mathrm e}^{-2\pi {\mathrm i}/N}$. 
The components $ \hat{a}_{\nuu}=\hat{a}_{\nu_{1},\nu_{2}}$ of $\hat{\mathbf A}$ have the form 
$$  \textstyle \hat{a}_{\nuu} = \hat{a}_{\nu_{1},\nu_{2}} = 
\frac{1}{\sqrt{MN}}\sum\limits_{k_{1}=-n}^{n-1} \sum\limits_{k_{2}=-m}^{m-1} a_{k_{1},k_{2}}\, \omega_{N}^{k_{1}\nu_{1}} \omega_{M}^{k_{2}\nu_{2}} . $$
The inverse two-dimensional discrete Fourier transform is  given by 
$ {\mathbf A} =  \overline{\mathbf F}_{N} \, \hat{\mathbf A} \, \overline{{\mathbf F}}_{M}$, 
since we have ${\mathbf F}_{N}^{-1} = \overline{\mathbf F}_{N}$.
Throughout the paper, we assume for simplicity that $N$ is a multiple of $8$, such that $\frac{n}{2}= \frac{N}{4}$ and $\frac{n}{4}  = \frac{N}{8}$  are integers.

Further, 
$\vec({\mathbf A})$ denotes the columnwise vectorization of a matrix ${\mathbf A}$, and ${\mathbf A} \otimes {\mathbf B}$ is the Kronecker product of ${\mathbf A} \in {\mathbb C}^{2n \times 2m}$, ${\mathbf B}\in {\mathbb C}^{N_{2} \times M_{2}}$ given by 
$${\mathbf A} \otimes {\mathbf B} = (a_{k_{1},k_{2}} {\mathbf B})_{k_{1}=-n,k_{2}=-m}^{n-1,m-1} \in {\mathbb C}^{2nN_{2} \times 2m M_{2}}.$$ 
The identity matrix is always denoted by ${\mathbf I}$, and its dimension follows from the context. 

\medskip

We will study the problem of how to reconstruct the image ${\mathbf A}$ from structured incomplete 2D DFT data 
$\hat{a}_{\nuu}$, $\nuu \in \Lambda \subset \Lambda_{N,M}$.
In particular, we assume that  only  $\lfloor \frac{N}{r} \rfloor$  rows of $\hat{\mathbf A}$ are available.
Here, $r \in {\mathbb N}$ is the \textit{reduction rate}. 
Since the low-pass data contain 
the most important image information, we will focus on a sampling mask given by 
\begin{equation}\label{set} \Lambda  \times \Lambda_M := (\Lambda_{L} \cup \Lambda_{K}^{(2)}) \times \Lambda_M,
\end{equation}
where $L=2\ell+1 \le \lfloor\frac{N}{r}\rfloor$ is odd, and $K$ is even. 
Here, $\Lambda: =\Lambda_{L} \cup \Lambda_{K}^{(2)}$ denotes the one-dimensional index set of acquired rows, where the \textit{low-pass set} $\Lambda_{L}$ contains the indices of centered rows.
Depending on a fixed reduction rate $r$, we assume that the number of all
acquired rows, i.e., the number of indices in $\Lambda$, is $\lfloor \frac{N}{r} \rfloor$ (or  $\lfloor \frac{N}{r} \rfloor-1$), i.e., we fix $K$ such that $L+ (K-\ell) = \lfloor \frac{N}{r} \rfloor$ (or $L+ (K-\ell) = \lfloor \frac{N}{r} \rfloor-1$).
In other words, outside the low-pass rows, $\lfloor \frac{N}{r} \rfloor-L$ further rows are acquired, namely only every second row symmetrically with respect to the image center, see Figure \ref{fig1} for $N=M=128$ and $L=11$. 
For $r=2$, the mask has $63$ rows, 
for $r=4$, it has $31$ rows, for $r=6$, $21$ rows and for $r=8$ only $15$ rows acquired.  
We will show in Section 5 that this mask gives better reconstruction results than taking different rows despite the low-pass set.
{\small
\begin{figure}[t]
\begin{center}
	\includegraphics[width=0.20\textwidth,height=0.20\textwidth]{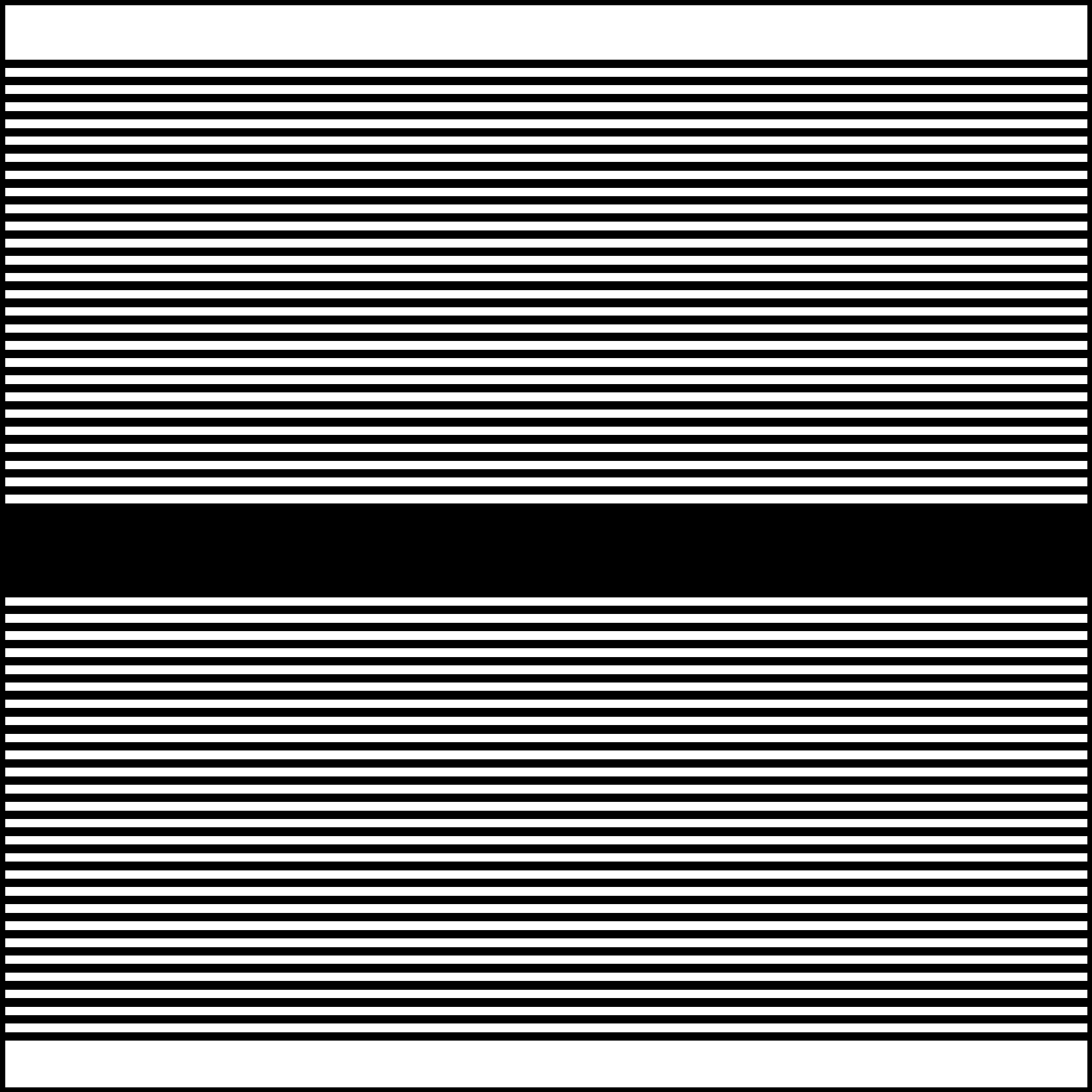} \hspace{1mm}
	\includegraphics[width=0.20\textwidth,height=0.20\textwidth]{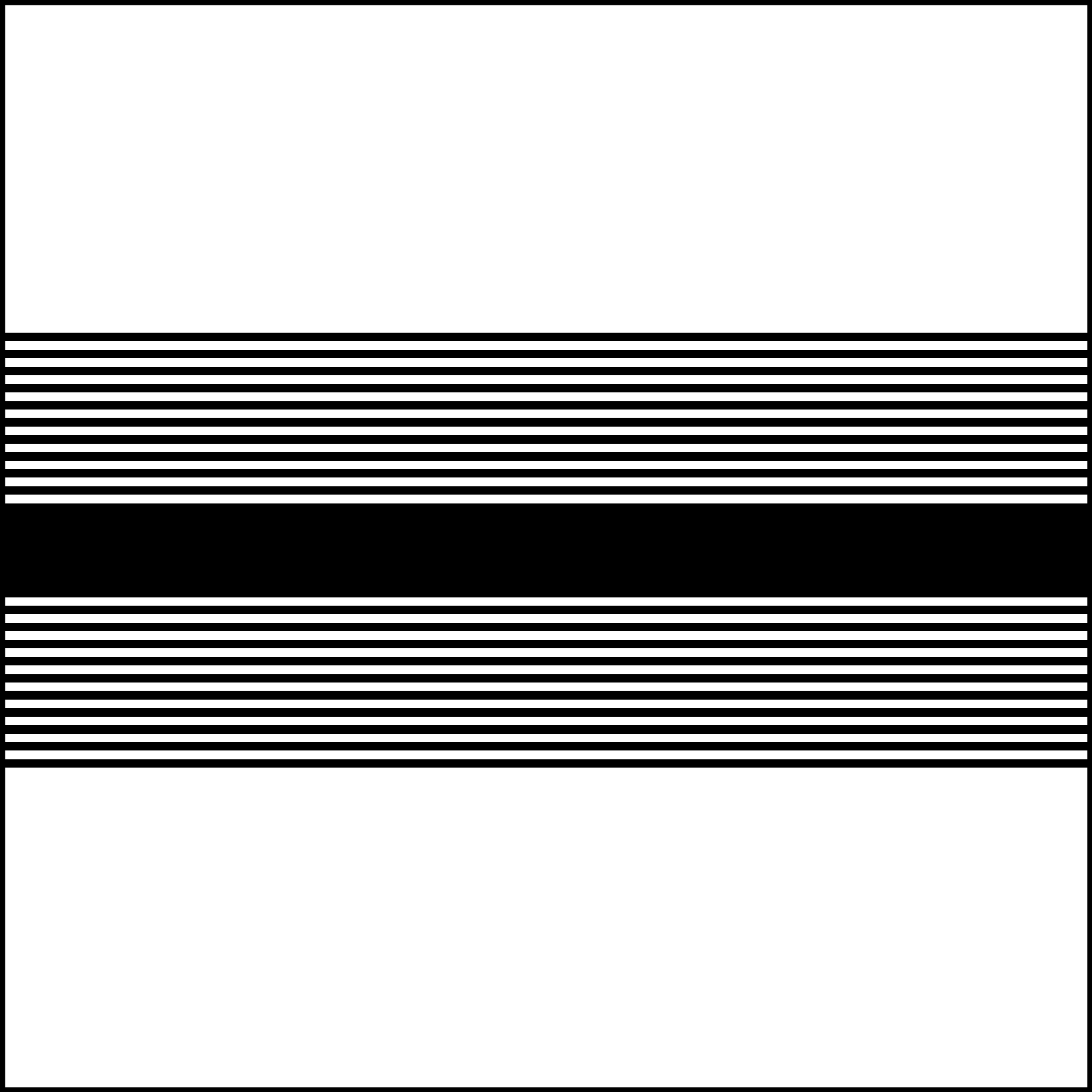} \hspace{1mm}
	\includegraphics[width=0.20\textwidth,height=0.20\textwidth]{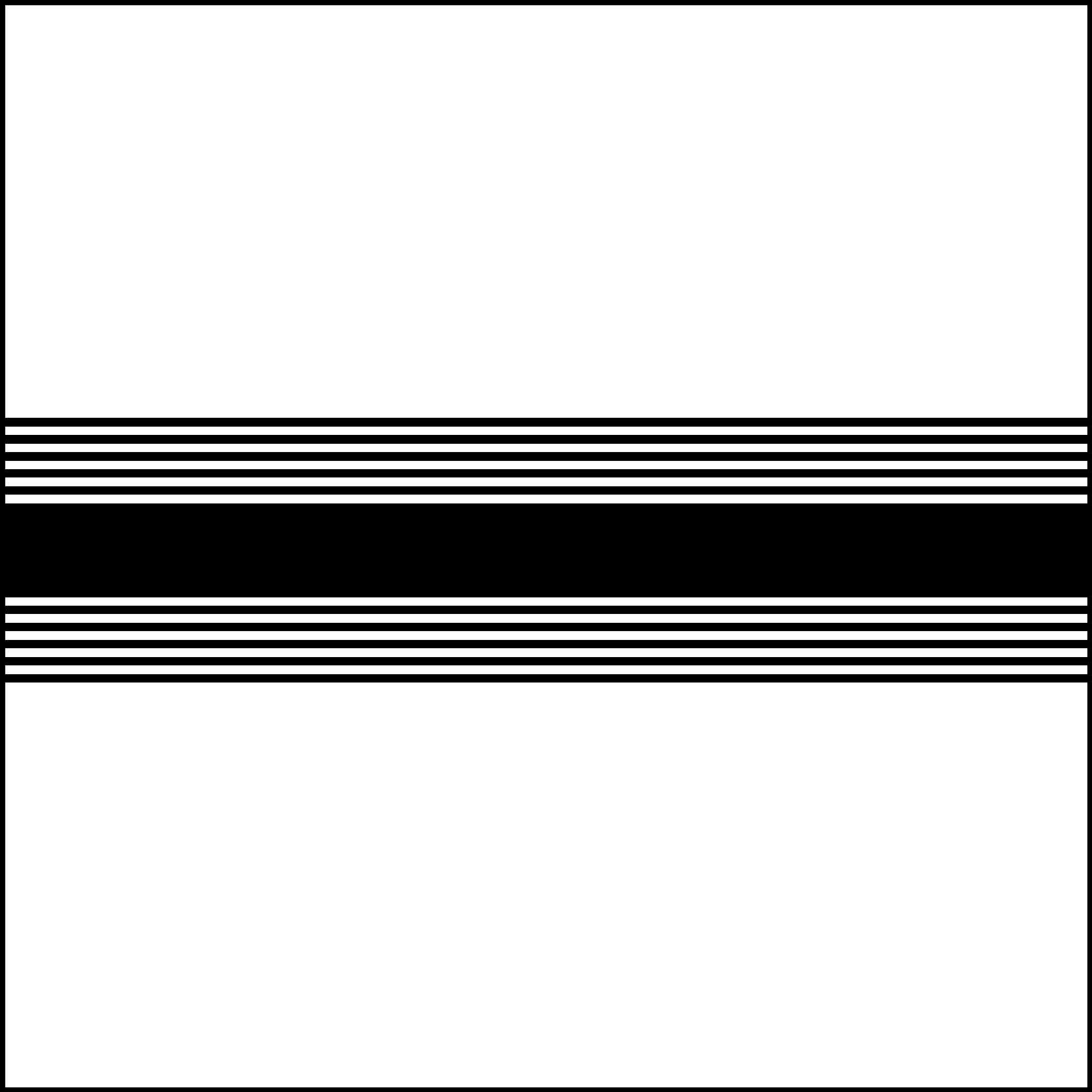} \hspace{1mm}
	\includegraphics[width=0.20\textwidth,height=0.20\textwidth]{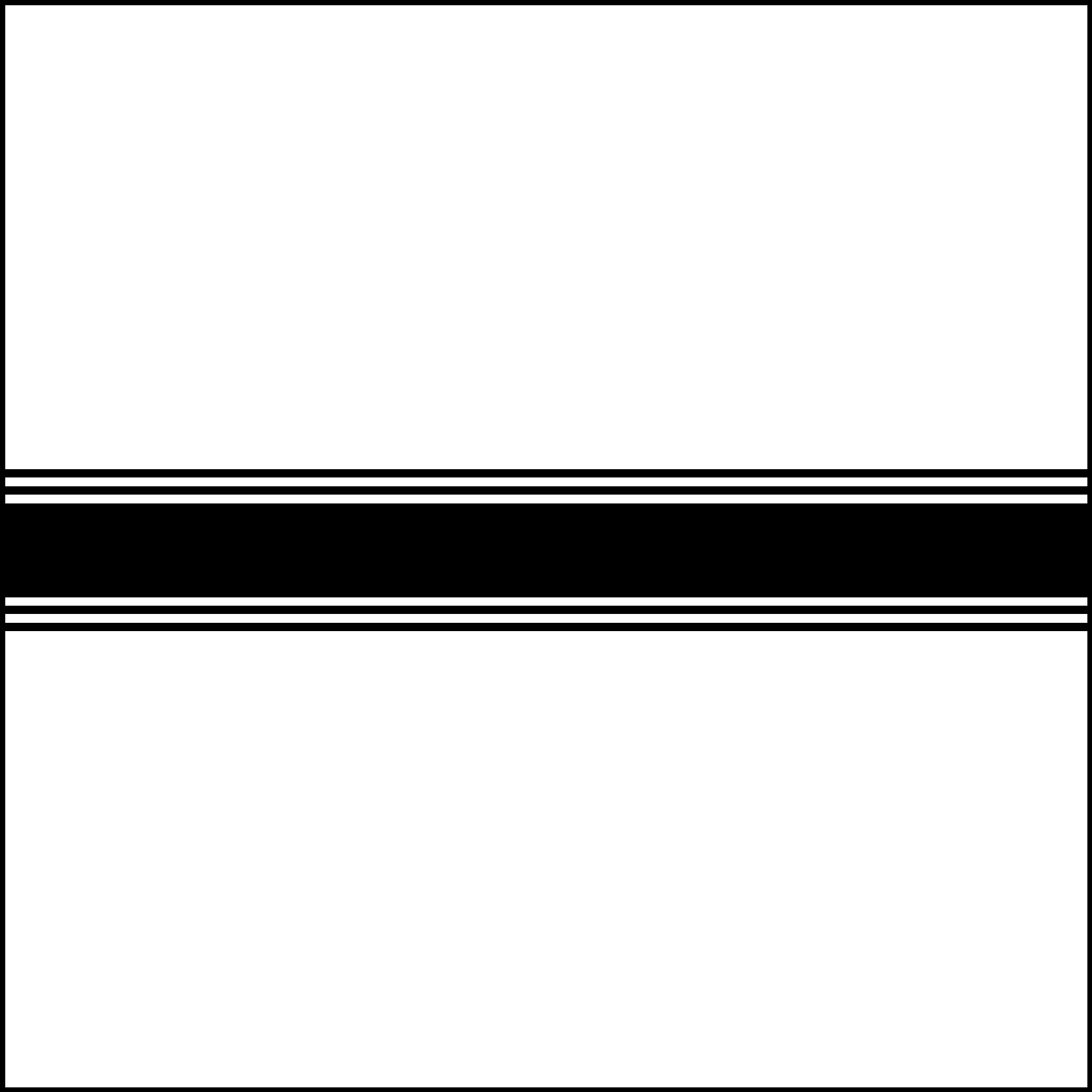}
	\end{center}
\caption{
Masks for acquired Fourier data for an $128 \times 128$ image with width $L=11$ of the low-pass set and reduction rates $r=2,4,6,8$, where black lines illustrate the acquired rows.}
\label{fig1}
\end{figure}
}

\noindent
We denote with ${\mathbf P}^{(\Lambda)}$ the \textit{projection matrix} (mask)  to the sampling set $\Lambda$ in (\ref{set}), i.e.,
\begin{align}\label{PL}
 {\mathbf P}^{(\Lambda)} =(p_{\nuu}^{(\Lambda)})_{\nuu \in \Lambda_{N,M}} = {\mathbf p}^{(\Lambda)} \, {\mathbf 1}^T
  \qquad \text{with} \qquad 
  p_{\nuu}^{(\Lambda)}=p_{\nu_1,\nu_2}^{(\Lambda)} := \left\{ \begin{array}{ll}1 &  \nu_1 \in \Lambda, \\
0 &  \nu_1 \in  \Lambda_N \setminus \Lambda. \end{array} \right.  
  \end{align}
In particular, $p_{\nu_1,\nu_2}^{(\Lambda)} =p_{\nu_1,1}^{(\Lambda)}$ for all $\nu_2 \in \Lambda_M$ and 
 ${\mathbf p}^{(\Lambda)} = (p_{\nu_1,1}^{(\Lambda)})_{\nu_1=-n}^n$. 
Thus, ${\mathbf P}^{(\Lambda)}$ contains rows of ones and zero rows,  see Figure \ref{fig1}.
Then we can formulate the reconstruction problem as follows:\\ Find an optimal reconstruction $\tilde{\mathbf A}$ of ${\mathbf A}$ from the incomplete 2D DFT data, i.e., with 
\begin{align} \label{aufgabe}
{\mathbf P}^{(\Lambda)} \circ \hat{\tilde{\mathbf A}}= {\mathbf P}^{(\Lambda)} \circ \hat{\mathbf A} = {\mathbf P}^{(\Lambda)} \circ ({\mathbf F}_{N} {\mathbf A} {\mathbf F}_{M}), 
\end{align}
where $\circ$ denotes the pointwise product.
The reconstruction quality will be measured using the peak signal to noise ratio PSNR$=10 \log_{10}\big( \frac{NM}{\|\tilde{\mathbf A} - {\mathbf A}\|_{2}^{2}}\big)$, where 
$\|\tilde{\mathbf A} - {\mathbf A}\|_{2} := 
\sum\limits_{\kk \in \Lambda_{N,M}} |\tilde{a}_{\kk} - a_{\kk}|^{2})^{1/2}
$.
\smallskip

\noindent
For complex images ${\mathbf A}= {\mathbf A}_{R} + {\mathrm i} {\mathbf A}_{I} = (a_{R,\kk}+ {\mathrm i}a_{I,\kk})_{\kk\in \Lambda_{N,M}}$  with ${\mathbf A}_{R}, {\mathbf A}_{I} \in {\mathbb R}^{N \times M}$,
we can reconstruct ${\mathbf A}_{R}$ and  ${\mathbf A}_{I}$ separately, since the sampling set is taken symmetrically, i.e., for acquired $\hat{a}_{\nuu}=\hat{a}_{\nu_{1},\nu_{2}}$, we also have acquired
 $\hat{a}_{-\nuu}=\hat{a}_{-\nu_{1},-\nu_{2}}$. For all $\nuu \in \Lambda_{{N,M}}$ we have
 \begin{align*}
\textstyle \frac{1}{2} (\hat{a}_{\nuu}+ \overline{\hat{a}_{-\nuu}}) &= \textstyle \frac{1}{2} \Big( \sum\limits_{\kk \in \Lambda_{N,M}} \!\!\!\!\!(a_{R,\kk}+ {\mathrm i} a_{I,\kk})  \omega^{\nu_1k_1}\omega_{M}^{\nu_2k_2}+ \sum\limits_{\kk \in \Lambda_{N,M}} \!\!\!\!\!\overline{(a_{R,\kk}+ {\mathrm i} a_{I,\kk})}    \omega^{\nu_1k_1}\omega_{M}^{\nu_2k_2} \Big)
 = \hat{a}_{R,\nuu}, \\
\textstyle \frac{1}{2} (\hat{a}_{\nuu}- \overline{\hat{a}_{-\nuu}}) &= \textstyle \frac{1}{2} \Big(\sum\limits_{\kk \in \Lambda_{N,M}} \!\!\!\!\!(a_{R,\kk}- {\mathrm i} a_{I,\kk}) \omega^{\nu_1k_1}\omega_{M}^{\nu_2k_2}+ \sum\limits_{\kk \in \Lambda_{N,M}} \!\!\!\!\!\overline{(a_{R,\kk}- {\mathrm i} a_{I,\kk})} \omega^{\nu_1k_1}\omega_{M}^{\nu_2k_2} \Big)
 = \hat{a}_{I,\nuu}. 
 \end{align*}
 In other words, to reconstruct a real image ${\mathbf A}$ we can reduce the index set  $\Lambda \times \Lambda_M$ in (\ref{set}) of acquired data to 
 $$   \Lambda'  \times \Lambda_M= \Big(\{0, 1, \ldots , \ell\} \cup \{1, 3, \ldots ,{K}-1\} \Big) \times \Lambda_{M}$$
 such that a reduction rate $r$ considered in this paper corresponds  in the real case to a reduction rate of almost $2r$.

\paragraph{Outline of this paper.}
In Section 2, we will explain in detail, why this reconstruction problem is indeed challenging. 
We will show, why a suitable choice of the width $L$ of the low-pass  set is crucial  to achieve a desired reconstruction quality.

\noindent
Observe that  the inverse Fourier transform of rows, 
 $({\mathbf P}^{(\Lambda)}\circ\hat{\mathbf A}) {\mathbf F}_M^{-1} = 
 {\mathbf P}^{(\Lambda)}  \circ  ({\mathbf F}_N  {\mathbf A})$, provides
 the incomplete columns  $ {\mathbf p}^{(\Lambda)} \circ  \hat{\mathbf a}_{\nu_2}$, $\nu_2=-m, \ldots , m$,  of ${\mathbf F}_N {\mathbf A}$, where ${\mathbf p}^{(\Lambda)}$ denotes one column of ${\mathbf P}^{(\Lambda)}$. 
Therefore, one may be tempted to simplify the reconstruction problem into separated one-dimensional reconstruction problems for every column of ${\mathbf A}$. 
We discuss the limitations of such 1D reconstruction approaches, particularly zero refilling and filtering with 1D  kernels, in Section 2.2.

\noindent
Also adaptive interpolation methods, such as GRAPPA  \cite{grappa} and SPIRiT \cite{spirit}, which  are  frequently applied in parallel MRI, fail to produce satisfactory reconstruction results, see Sections  2.3--2.4.

In Section 3, we propose an iterative reconstruction algorithm based on total variation minimization of the resulting image. 
The arising functional is minimized using the primal dual algorithm \cite{CP10}.
In Section 4, we present a new hybrid algorithm, which connects the TV-minimization reconstruction with an adaptive reconstruction improvement by exploiting the data knowledge that can be extracted from the special sampling pattern.

In Section 5, we show several examples of image reconstruction and show the very good performance of our hybrid algorithm for data reduction rates of up to 8 (that is,  $16$ in the real case). In particular, our hybrid algorithm always  improves all other reconstruction methods for the acquired set of Fourier data. We also show numerically that other sampling schemes, where besides the low-pass area every third or every fourth row of $\hat{\mathbf A}$ is acquired until the upper bound of $\lfloor \frac{N}{r} \rfloor$ is reached, do not lead to better reconstruction results. 
 Our approach can be extended to other structured sampling patterns.

\section{Challenges of the reconstruction problem}
\label{sec2}

 In this section, we first summarize  some facts  about the reconstruction challenges  from incomplete  2D-DFT data for the described sampling pattern. Then we  will give a short overview  of simple reconstruction ideas and show their weaknesses  for the reconstruction problem at hand.
 
\subsection{Importance of the low-pass set}
As introduced in (\ref{set}), we assume that acquired Fourier data are given with respect to the index set 
$\Lambda \times \Lambda_M$, which consists of the union of the low-pass set 
$\Lambda_{L,M}= \Lambda_L \times \Lambda_M$ containing the rows of $\hat{\mathbf A}$ with indices $-\ell, \ldots, \ell$ and the set $\Lambda_{K}^{(2)} \times \Lambda_M$ containing each second (odd-indexed) row of 
$\hat{\mathbf A}$ until the bound of $\lfloor N/r\rfloor$  acquired rows is reached. Assume for a moment that $L=0$ and $r=2$, i.e., $\Lambda= \Lambda_N^{(2)}$,
such that the index set corresponding to acquired entries is given by  $\Lambda_{N}^{(2)} \times \Lambda_M$.
In this case,  every second row of $\hat{\mathbf A}$ is given, i.e., 
$\hat{a}_{2\nu_{1}+1,\nu_{2}}$ for $\nu_{1}=-\frac{n}{2}, \ldots , \frac{n}{2}-1$, $\nu_{2}=-m, \ldots , m$.
With this knowledge, we can recover the differences $a_{k_{1},k_{2}}- a_{k_{1}+n,k_{2}}$ of the components in ${\mathbf A}= (a_{k_{1},k_{2}})_{k_{1}=-n,k_{2}=-m}^{n-1,m-1}$ exactly for $k_{1}=0, \ldots , n-1$, $k_{2}=-m, \ldots , m$, since 
\begin{align}
\nonumber
\textstyle \frac{1}{2} (a_{k_{1},k_{2}}- a_{k_{1}-n,k_{2}})
&= \textstyle  \frac{1}{2\sqrt{MN}} \sum\limits_{\nu_{2}=-m}^{m-1} \Big(\sum\limits_{\nu_{1} = -n}^{n-1}  \hat{a}_{\nu_{1},\nu_{2}} (\omega_{N}^{-k_{1}\nu_{1}}   - \omega_{N}^{-(k_{1}-n) \nu_{1}})\Big) \, \omega_{M}^{-k_{2}\nu_{2}}\\
\nonumber
&= \textstyle  \frac{1}{2\sqrt{MN}} \sum\limits_{\nu_{1} = -n}^{n-1} \sum\limits_{\nu_{2}=-m}^{m-1} \hat{a}_{\nu_{1},\nu_{2}} (\omega_{N}^{-k_{1}\nu_{1}} - (-1)^{\nu_{1}} \omega_{N}^{-k_{1}\nu_{1}}) \, \omega_{M}^{-k_{2}\nu_{2}} \\
&= \textstyle 
\frac{1}{\sqrt{MN}} \sum\limits_{\nu_{1} = -n/2}^{n/2-1} \sum\limits_{\nu_{2}=-m}^{m-1} \hat{a}_{2\nu_{1}+1,\nu_{2}} \omega_{N}^{-k_{1}(2\nu_{1}+1)} \omega_{M}^{-k_{2}\nu_{2}}.
\label{two}
\end{align}
In other words, considering the upper and the lower half ${\mathbf A}_1, {\mathbf A}_2 \in {\mathbb R}^{n \times M}$ of ${\mathbf A}= \Big( \!\!\begin{array}{c} {\mathbf A}_1 \\  {\mathbf A}_2 \end{array} \!\! \Big)$, the given Fourier samples provide the difference ${\mathbf A}_1 - {\mathbf A}_2$.
Unfortunately, the remaining information needed to recover ${\mathbf A}$, namely the sums $(a_{k_{1},k_{2}}+ a_{k_{1}+n,k_{2}})$
for $k_{1}=0, \ldots , n-1$, $k_{2}=-m, \ldots , m$,  or equivalently ${\mathbf A}_1 + {\mathbf A}_2$, only depends on the non-acquired even-indexed Fourier data $\hat{a}_{2\nu_{1},\nu_{2}}$, and cannot be reconstructed.

\noindent
 We investigate what can be gained by a local interpolation scheme. Let 
${\mathbf W} =(\w_{\nuu})_{\nuu \in \Lambda_{n,M}} \in {\mathbb C}^{n \times M}$
be a matrix of arbitrary complex weights and 
$\check{\mathbf W} := (\check{\w}_{\kk})_{\kk \in \Lambda_{n,M}} = \sqrt{Mn} \, {\mathbf F}_{n}^{-1} {\mathbf W} {\mathbf F}_{M}^{-1}$
its $2D$ inverse Fourier transform multiplied  with $\sqrt{Mn}$.\\
Using a local interpolation scheme,  one wants to approximate a missing Fourier value from the acquired Fourier values in a local neighborhood. 
Assume that the interpolation scheme is of the form 
\begin{align} \nonumber
\hat{\tilde{a}}_{2\nu_{1}+1, \nu_{2}}& :=\textstyle \hat{a}_{2\nu_{1}+1, \nu_{2}} 
\\
\label{interpol}
\hat{\tilde{a}}_{2\nu_{1}, \nu_{2}} &:=  \textstyle \!\sum\limits_{\ell_{1}=-\frac{n}{2}}^{\frac{n}{2}-1}
\sum\limits_{\ell_{2}=-m}^{m-1} \w_{\nu_{1}-\ell_{1}, \nu_{2}-\ell_{2}} \, \hat{a}_{2\ell_{1}+1,\ell_{2}}, 
\end{align}
for $\nu_{1}=-\frac{n}{2}, \ldots , \frac{n}{2}-1$, $\nu_{2}=-m, \ldots , m-1$,
where for local approximation of unknown Fourier data 
the weight matrix ${\mathbf W}$ has nonzero entries $\w_{\nu_1,\nu_2}$ in a window around  the center ${\mathbf 0}$.
In (\ref{interpol}) we assume periodicity of the weights in ${\mathbf W}$ 
with 
$\w_{\nu_{1}+ \ell_{1}n, \nu_{2}+ \ell_{2}M}= \w_{\nu_{1}, \nu_{2}}$ for all $\ell_{1}, \ell_{2} \in {\mathbb Z}$, $\nu_{1}=-\frac{n}{2}, \ldots , \frac{n}{2}-1$, $\nu_{2}= -m, \ldots , m-1$. 
We can  actually prove the following theorem for the interpolation scheme (\ref{interpol}).

\begin{theorem}\label{thm:linear}
Let 
 ${\mathbf A}=(a_{\kk})_{\kk \in \Lambda_{N,M}} \in {\mathbb C}^{N \times M}$, 
$\hat{\mathbf A}=(\hat{a}_{\nuu})_{\nuu \in \Lambda_{N,M}} \in {\mathbb C}^{N \times M}$
its $2$-D Fourier transform, and assume that half of the Fourier data, namely 
$$ \textstyle \hat{a}_{2\nu_{1}+1,\nu_{2}}, \qquad \nu_{1}=-\frac{n}{2}, \ldots , \frac{n}{2}-1, \; \nu_{2}=-m, \ldots , m-1,$$ 
are acquired. Let  
${\mathbf W} =(\w_{\nuu})_{\nuu \in \Lambda_{n,M}}$
be a matrix of arbitrary complex weights as given above.
 Then, the interpolation scheme $(\ref{interpol})$
leads to an image approximation $\tilde{\mathbf A}= (\tilde{a}_{k_{1},k_{2}})_{k_{1}=-n,k_{2}=m}^{n-1,m-1}$, where 
\begin{align}\label{prod}
\tilde{a}_{k_{1}, k_{2}} = \textstyle \frac{1}{2} ( 1 +  \check{\w}_{k_{1},k_{2}}  \, \omega_{N}^{k_{1}} ) ({a}_{k_{1}, k_{2}}- {a}_{k_{1}-n, k_{2}})
\end{align}
for all $k_{1}=0, \ldots , n-1$, $k_{2}=-m, \ldots , m-1$.
\end{theorem}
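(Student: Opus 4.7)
The plan is to compute $\tilde{a}_{k_1,k_2}$ by taking the inverse 2D DFT of the reconstructed spectrum $\hat{\tilde{\mathbf A}}$, splitting the sum over $\nu_1 \in \Lambda_N$ into odd-indexed entries (which equal the acquired values) and even-indexed entries (which are given by the interpolation rule (\ref{interpol})), and then reassembling both contributions into a common factor $\tfrac{1}{2}(a_{k_1,k_2}-a_{k_1-n,k_2})$ that was identified in (\ref{two}).

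First I would write
\begin{align*}
\tilde{a}_{k_1,k_2}
= \tfrac{1}{\sqrt{NM}}\!\sum_{\nu_1=-n/2}^{n/2-1}\sum_{\nu_2=-m}^{m-1}\!\!\Big[\hat{\tilde{a}}_{2\nu_1+1,\nu_2}\,\omega_N^{-k_1(2\nu_1+1)} + \hat{\tilde{a}}_{2\nu_1,\nu_2}\,\omega_N^{-2k_1\nu_1}\Big]\omega_M^{-k_2\nu_2}.
\end{align*}
The odd-indexed part is handled directly by (\ref{two}): since $\hat{\tilde{a}}_{2\nu_1+1,\nu_2}=\hat{a}_{2\nu_1+1,\nu_2}$, it equals $\tfrac{1}{2}(a_{k_1,k_2}-a_{k_1-n,k_2})$. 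So the only nontrivial work is in the even-indexed contribution $S_{k_1,k_2}$.

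For $S_{k_1,k_2}$, I would substitute (\ref{interpol}), swap the order of summation, and perform the shift $p=\nu_1-\ell_1$, $q=\nu_2-\ell_2$. The periodicity assumption $\w_{\nu_1+\ell n,\nu_2+\ell' M}=\w_{\nu_1,\nu_2}$ is exactly what makes this index shift work modulo the summation range, decoupling the sum into a product
\begin{align*}
S_{k_1,k_2} = \tfrac{1}{\sqrt{NM}}\Big(\!\!\sum_{p,q}\w_{p,q}\,\omega_n^{-k_1 p}\omega_M^{-k_2 q}\Big)\Big(\!\!\sum_{\ell_1,\ell_2}\hat{a}_{2\ell_1+1,\ell_2}\,\omega_n^{-k_1\ell_1}\omega_M^{-k_2\ell_2}\Big),
\end{align*}
where I used $\omega_N^{-2k_1\nu_1}=\omega_n^{-k_1\nu_1}$. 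The first factor is precisely $\check{\w}_{k_1,k_2}$ by the definition $\check{\mathbf W}=\sqrt{Mn}\,{\mathbf F}_n^{-1}{\mathbf W}{\mathbf F}_M^{-1}$ (the $\sqrt{Mn}$ cancels the normalization in ${\mathbf F}_n^{-1}{\mathbf F}_M^{-1}$). In the second factor I would rewrite $\omega_n^{-k_1\ell_1}=\omega_N^{-k_1(2\ell_1+1)}\,\omega_N^{k_1}$, pull $\omega_N^{k_1}$ out, and recognize the remaining sum again via (\ref{two}) as $\tfrac{\sqrt{NM}}{2}(a_{k_1,k_2}-a_{k_1-n,k_2})$.

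Putting the pieces together yields $S_{k_1,k_2}=\tfrac{1}{2}\,\check{\w}_{k_1,k_2}\,\omega_N^{k_1}(a_{k_1,k_2}-a_{k_1-n,k_2})$, which combined with the odd-indexed part gives the claimed formula (\ref{prod}). The only delicate step I expect to watch carefully is the normalization bookkeeping in the factor $\sqrt{NM}$ versus the convention used for $\check{\mathbf W}$, together with a clean use of the periodicity of ${\mathbf W}$ to justify the index shift; everything else is a direct manipulation of the two identified Fourier sums.
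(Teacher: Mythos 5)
Your proposal is correct and follows essentially the same route as the paper's proof: split the inverse DFT into the odd-indexed (acquired) and even-indexed (interpolated) contributions, identify the odd part with $\tfrac{1}{2}(a_{k_1,k_2}-a_{k_1-n,k_2})$ via (\ref{two}), and in the even part swap summation order, use periodicity of ${\mathbf W}$ to shift indices and factor out $\check{\w}_{k_1,k_2}$, then extract $\omega_N^{k_1}$ to reapply (\ref{two}). The normalization bookkeeping you flag works out exactly as you describe, so no gaps remain.
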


\begin{proof}
Taking the interpolation scheme (\ref{interpol}), it follows with (\ref{two}) for the components of $\tilde{\mathbf A} = {\mathbf F}_{N}^{-1} \hat{\tilde{\mathbf A}} {\mathbf F}_{M}^{-1}$ that 
\begin{align*}
&\tilde{a}_{k_{1},k_{2}} = \textstyle \frac{1}{\sqrt{MN}}\Big(
\sum\limits_{\nu_{1} = -\frac{n}{2}}^{\frac{n}{2}-1} \sum\limits_{\nu_{2}=-m}^{m-1} \!\! \hat{a}_{2\nu_{1}+1,\nu_{2}} \omega_{N}^{-k_{1}(2\nu_{1}+1)} \omega_{M}^{-k_{2}\nu_{2}}
+ \!\!\!
\sum\limits_{\nu_{1} = -\frac{n}{2}}^{\frac{n}{2}-1} \sum\limits_{\nu_{2}=-m}^{m-1} \!\!\hat{\tilde{a}}_{2\nu_{1},\nu_{2}} \omega_{N}^{-2k_{1}\nu_{1}} \omega_{M}^{-k_{2}\nu_{2}}\Big) \\
 &= \textstyle \frac{1}{2}(a_{k_{1},k_{2}} - a_{k_{1}-n,k_{2}}) +  \frac{1}{\sqrt{MN}} \!\!\sum\limits_{\nu_{1} = -\frac{n}{2}}^{\frac{n}{2}-1} \sum\limits_{\nu_{2}=-m}^{m-1} 
\sum\limits_{\ell_{1}=-\frac{n}{2}}^{\frac{n}{2}-1}
\sum\limits_{\ell_{2}=-m}^{m-1}\!\!\!\w_{\nu_{1}-\ell_{1}, \nu_{2}-\ell_{2}} \, \hat{a}_{2\ell_{1}+1,\ell_{2}} 
 \omega_{N}^{-2k_{1}\nu_{1}} \omega_{M}^{-k_{2}\nu_{2}}\\
  &= \textstyle \frac{1}{2}(a_{k_{1},k_{2}} - a_{k_{1}-n,k_{2}}) \\
  & \textstyle  + 
  \frac{1}{\sqrt{MN}} \!\!
  \sum\limits_{\ell_{1}=-\frac{n}{2}}^{\frac{n}{2}-1}
\sum\limits_{\ell_{2}=-m}^{m-1}  \hat{a}_{2\ell_{1}+1,\ell_{2}}\, 
\omega_{N}^{-2\ell_{1}k_{1}} \omega_{M}^{-\ell_{2}k_{2}} \!\!
\sum\limits_{\nu_{1} = -\frac{n}{2}}^{\frac{n}{2}-1} \sum\limits_{\nu_{2}=-m}^{m-1} \!\!
\w_{\nu_{1}-\ell_{1}, \nu_{2}-\ell_{2}} 
 \omega_{N}^{-2k_{1}(\nu_{1}-\ell_{1)}} \omega_{M}^{-k_{2}(\nu_{2}-\ell_{2})}
 \\
  &= \textstyle \frac{1}{2}(a_{k_{1},k_{2}} - a_{k_{1}-n,k_{2}}) + 
  \frac{1}{\sqrt{MN}} 
  \Big(\sum\limits_{\ell_{1}=-\frac{n}{2}}^{\frac{n}{2}-1}
\sum\limits_{\ell_{2}=-m}^{m-1}  \hat{a}_{2\ell_{1}+1,\ell_{2}}
\omega_{N}^{-2\ell_{1}k_{1}} \omega_{M}^{-\ell_{2}k_{2}} \Big)\, 
\check{\w}_{k_{1},k_{2}} 
 \\
  &= \textstyle \frac{1}{2}(a_{k_{1},k_{2}} - a_{k_{1}-n,k_{2}}) + 
  \frac{1}{\sqrt{MN}} \check{\w}_{k_{1},k_{2}} \, \omega_{N}^{k_{1}} \, 
  \Big(\sum\limits_{\ell_{1}=-\frac{n}{2}}^{\frac{n}{2}-1}
\sum\limits_{\ell_{2}=-m}^{m-1}  \hat{a}_{2\ell_{1}+1,\ell_{2}}
\omega_{N}^{-(2\ell_{1}+1)k_{1}} \omega_{M}^{-\ell_{2}k_{2}} \Big)\, 
 \\
 &= \textstyle \frac{1}{2} ( 1 +  \check{\w}_{k_{1},k_{2}}  \, \omega_{N}^{k_{1}} ) (a_{k_{1},k_{2}} - a_{k_{1}-n,k_{2}}),\end{align*}
 where $ \check{\mathbf W}=(\check{\w}_{k_{1},k_{2}})_{k_{1}=-\frac{n}{2},k_{2}=-m}^{\frac{n}{2}-1,m-1}
 = \sqrt{Mn} {\mathbf F}_{N}^{-1} {\mathbf W} {\mathbf F}_{M}$ as defined above. \end{proof}
\medskip

Formula (\ref{prod}) shows that we are in fact unable to find a good  reconstruction of an image only from the data 
 corresponding to the index set  $\Lambda_{N}^{(2)} \times \Lambda_{M}$, i.e., for the sampling pattern, where every second row of the image is missing. Any (local) interpolation scheme does not get rid of the problem that we obtain just twice a (scaled) version of the difference of the upper and the lower half of the image.
 In other words, the low-pass set is crucial for the image reconstruction.
Therefore, in the remaining sections, we always assume that $L=2\ell+1>0$, i.e., we have a certain low-pass part of the image which is completely acquired.

\begin{remark}
Obviously, a similar observation  as in Theorem $\ref{thm:linear}$ can be shown if instead of all odd-indexed rows of $\hat{\mathbf A}$ all even-indexed rows are acquired. Then, we have for $k_{1}=0, \ldots , n-1$, $k_{2}=-m, \ldots , m-1$,
$$ \textstyle a_{k_{1},k_{2}}+ a_{k_{1}-n,k_{2}}= \frac{2}{\sqrt{MN}}
\sum\limits_{\nu_{1} = -\frac{n}{2}}^{\frac{n}{2}-1} \sum\limits_{\nu_{2}=-m}^{m-1} \hat{a}_{2\nu_{1},\nu_{2}} \omega_{N}^{-2k_{1}\nu_{1}} \omega_{M}^{-k_{2}\nu_{2}},
$$ 
and every interpolation scheme for the unacquired Fourier values of the form 
$$ \hat{\tilde{a}}_{2\nu_{1}+1, \nu_{2}} :=  \textstyle \!\sum\limits_{\ell_{1}=-\frac{n}{2}}^{\frac{n}{2}-1}\!
\sum\limits_{\ell_{2}=-m}^{m-1} \!\w_{\nu_{1}-\ell_{1}, \nu_{2}-\ell_{2}} \, \hat{a}_{2\ell_{1},\ell_{2}}, \, \nu_{1}=-\frac{n}{2}, {\ldots}, \frac{n}{2}-1, \, \nu_{2}=-m, {\ldots} , m-1,
$$ leads to a reconstruction $\tilde{\mathbf A}$, where
$\tilde{a}_{k_{1}, k_{2}} = \textstyle \frac{1}{2} ( 1 +  \omega_{N}^{-k_{1}}\check{\w}_{k_{1},k_{2}}) ({a}_{k_{1}, k_{2}}+ {a}_{k_{1}-n, k_{2}})$
for all $k_{1}=0, \ldots , n-1$, $k_{2}=-m, \ldots , m-1$. In this case, we do not get rid of the factor $({a}_{k_{1}, k_{2}}+ {a}_{k_{1}-n, k_{2}})$.
\end{remark}
\medskip

\subsection{Zero refilling  and one-dimensional low-pass filtering}
\label{ssec:zero}

\paragraph{Zero refilling.}
The simplest approach  to reconstruct ${\mathbf A}$ from the incomplete Fourier data 
${\mathbf P}^{(\Lambda)} \circ \hat{\mathbf A}$ is 
to replace the missing Fourier data by zero before applying the inverse 2D Fourier transform, i.e., 
\begin{align}\label{refill} \tilde{\mathbf A}^{(z)} := {\mathbf F}_{N}^{-1} ( {\mathbf P}^{(\Lambda)} \circ  \hat{\mathbf A} ) {\mathbf F}_{M}^{-1}  =  {\mathbf F}_{N}^{-1} ( {\mathbf P}^{(\Lambda)} \circ ( {\mathbf F}_{N} {\mathbf A})).
\end{align}
For the columns $\tilde{\mathbf a}_{k_2}^{(z)}$, $k_2=-m, \ldots, m-1$,  of the reconstruction image $\tilde{\mathbf A}^{(z)}$
we obtain 
\begin{align*} 
 \tilde{\mathbf a}_{k_2}^{(z)} = \textstyle  {\mathbf F}_N^{-1} ( {\mathbf p}^{(\Lambda)} \circ \hat{{\mathbf a}}_{k_2})
= {\mathbf F}_N^{-1} \hat{{\mathbf a}}_{k_2} +  {\mathbf F}_N^{-1} (({\mathbf p}^{(\Lambda)}  - {\mathbf 1}) \circ \hat{{\mathbf a}}_{k_2}) = {\mathbf a}_{k_2} +  {\mathbf F}_N^{-1} (({\mathbf p}^{(\Lambda)}  - {\mathbf 1}) \circ \hat{{\mathbf a}}_{k_2}) 
\end{align*}
with ${\mathbf 1}$ being the vector of ones of length $N$ and ${\mathbf p}^{(\Lambda)}$ a column of ${\mathbf P}^{(\Lambda)}$ in (\ref{PL}).
The representation of 
$ \tilde{\mathbf a}_{k_2}^{(z)}$  
can also be understood as a convolution in space domain, 
\begin{align*} \tilde{\mathbf a}_{k_2}^{(z)} &=  \textstyle 
{\mathbf F}_N^{-1} ({\mathbf p}^{(\Lambda)} \circ \hat{{\mathbf a}}_{k_2})
={\mathbf F}_N^{-1} {\mathbf p}^{(\Lambda)}  \star {\mathbf F}_N^{-1} \hat{\mathbf a}_{k_2} 
= \check{\mathbf p}^{(\Lambda)} \star {\mathbf a}_{k_2} 
= \frac{1}{\sqrt{N}} \big(\sum\limits_{\nu_1 \in \Lambda} \omega_N^{-k_1\nu_1}\big)_{k_1=-n}^n \, \star {\mathbf a}_{k_2} \\
&=  \textstyle \frac{1}{\sqrt{N}} \big(\sum\limits_{\nu_1 \in \Lambda} 
{a}_{\nu_1,k_2} \omega_N^{-\nu_1k_1} \big)_{k_1=-n}^n. 
\end{align*}
For every column $k_2$, the occurring  error has the form 
\begin{align}\label{error}  \textstyle \| \tilde{\mathbf a}_{k_2} - {\mathbf a}_{k_2}\|_2^2 = 
\| {\mathbf F}_N^{-1} ({\mathbf p}^{(\Lambda)}  - {\mathbf 1}) \circ \hat{{\mathbf a}}_{k_2}) \|_{2}^{2} =
\sum\limits_{\nu_1 \in \Lambda_N \setminus \Lambda} 
|\hat{{a}}_{\nu_1,k_2}|^{2}, 
\end{align}
i.e., the smaller the sum  of squares of all missing  Fourier values, 
the smaller the reconstruction error. Obviously, if the non-acquired Fourier values are zero, then the error vanishes.

\paragraph{One-dimensional low-pass filtering with $\Lambda= \Lambda_L$.}
Natural images  ${\mathbf A}$ usually possess some smoothness properties, which can in the discrete case be measured by local variations of pixel values  and lead to decaying Fourier values corresponding to higher frequencies.
 Therefore the acquired low-pass Fourier data contain  most of  the information and 
 the special choice $\Lambda= \Lambda_{L}$ with $L= \big\lfloor \frac{N}{r} \big\rfloor$ (or $L= \big\lfloor \frac{N}{r} \big\rfloor-1$ if $\big\lfloor \frac{N}{r} \big\rfloor$ is even), i.e., a sampling set consisting only of  centered rows seems to be favorable for the application of zero refilling.
In this case, $\check{\mathbf p}^{(\Lambda)}=\check{\mathbf p}^{(\Lambda_{L})}$ has the components 
$\sum_{j=-\ell}^{\ell} \omega_{N}^{-j k}$, $k=-n, \ldots, n-1$, which are samples of the Dirichlet kernel $D_{\ell}(\omega) = \sum_{j=-\ell}^{\ell} {\mathrm e}^{{\mathrm i}\omega j}$,  i.e., $\check{\mathbf p}^{(\Lambda_L)} = (D_{\ell}(\frac{2\pi k}{N}))_{k \in \Lambda_N}$.

The  estimate (\ref{error}) shows that 
${\mathbf p}^{(\Lambda)}$ is optimal with regard to the $2$-error compared to any other vector 
${\mathbf q}^{(\Lambda)}$ with support $\Lambda$ and arbitrary nonzero components for indices in $\Lambda$.
However, the aliasing effects, which always occur for reconstruction with the Dirichlet window vector ${\mathbf p}^{(\Lambda)} ={\mathbf p}^{(\Lambda_L)}$, can be reduced by taking other windows, as e.g.\ the Hamming window, which is defined by 
${\mathbf p}^{(Ham)} = (p_{k})_{k=-n}^{n-1}$, where
\begin{align*}
p_{k}^{(Ham)} := \left\{ \begin{array}{ll}
0.54 + 0.46 \cos(\frac{2\pi k}{2\ell}) & -\ell \le k \le \ell.\\
0 & \text{otherwise.} \end{array} \right.
\end{align*}
While the application of the Hamming window instead of the Dirichlet window avoids the special ringing artefacts, we observe a strong oversmoothing.
 Figure \ref{figure1} 
shows  the zero refilling reconstruction of the $512 \times 512$ pepper image 
for reduction rate $r=6$,  and $L=85= \big\lfloor \frac{512}{6} \big\rfloor $ (i.e., $\Lambda= \Lambda_{L} =\{-42, \ldots , 42\}$  and Dirichlet window ${\mathbf p}^{(\Lambda)}$), 
the reconstruction  with the Hamming window
${\mathbf p}^{(Ham)}$ for   $r=6$ and $L=85$ , and 
 the zero filling reconstruction with the sampling set $\Lambda$ in (\ref{set}) with $r=6$ and $L=43$, 
i.e., $\Lambda=\Lambda_{43} \cup \Lambda^{(2)}_{62}$.
The bottom row shows the corresponding  (normalized) vectors  $\frac{1}{\|\check{\mathbf p}^{(\Lambda_{L})}\|_{\infty}} \check{\mathbf p}^{(\Lambda_{L})}$,
$\frac{1}{\|\check{\mathbf p}^{(Ham)}\|_{\infty}}\check{\mathbf p}^{(Ham)}$ and $\frac{1}{\|\check{\mathbf p}^{(\Lambda)}\|_{\infty}}\check{\mathbf p}^{(\Lambda)}$, which are approximations of the ideal window $(\delta_{0,k})_{k=-n}^{n-1}
= \frac{1}{\sqrt{N}} {\mathbf F}_N^{-1} {\mathbf 1}$ that we would obtain  for complete Fourier data.
In Theorem \ref{thm:linear} we had shown that for $L=0$, a non-adaptive scheme always leads to a reconstruction, where $\tilde{a}_{k_1,k_2}$ contains the factor $a_{k_1,k_2} - a_{k_1-n,k_2}$. 
This effect is only partially reduced by employing an index set 
$\Lambda= \Lambda_L \cup \Lambda_K^{(2)}$ with $L>0$ and large $K$, where still the high magnitude of boundary coefficients of $\check{\mathbf p}^{(\Lambda)}$ can be recognized, see  Figure \ref{figure1} (right).

Using the flexibility of the sampling set $\Lambda$ in (\ref{set}), where we can fix the width $L \in \{0, \ldots , \lfloor \frac{N}{r} \rfloor\}$ of the low-pass set, we are interested in data-adaptive algorithms that significantly improve the recovery results achieved by low-pass reconstruction using zero refilling.

\begin{figure}[t]
\begin{center} \hspace*{-5mm}
	\includegraphics[width=0.29\textwidth,height=0.29\textwidth]{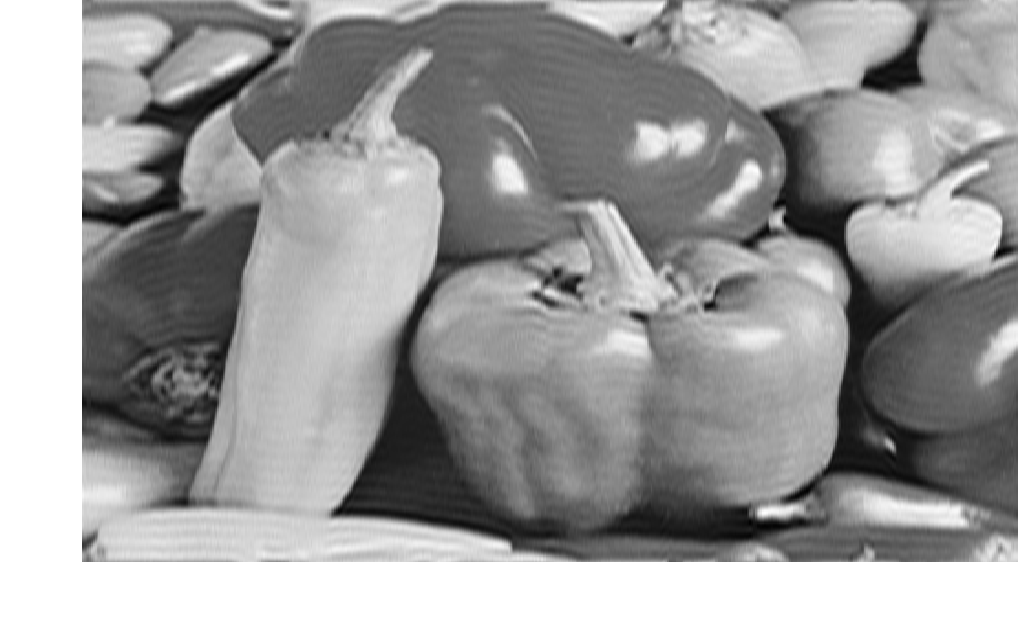} 
	\includegraphics[width=0.29\textwidth,height=0.29\textwidth]{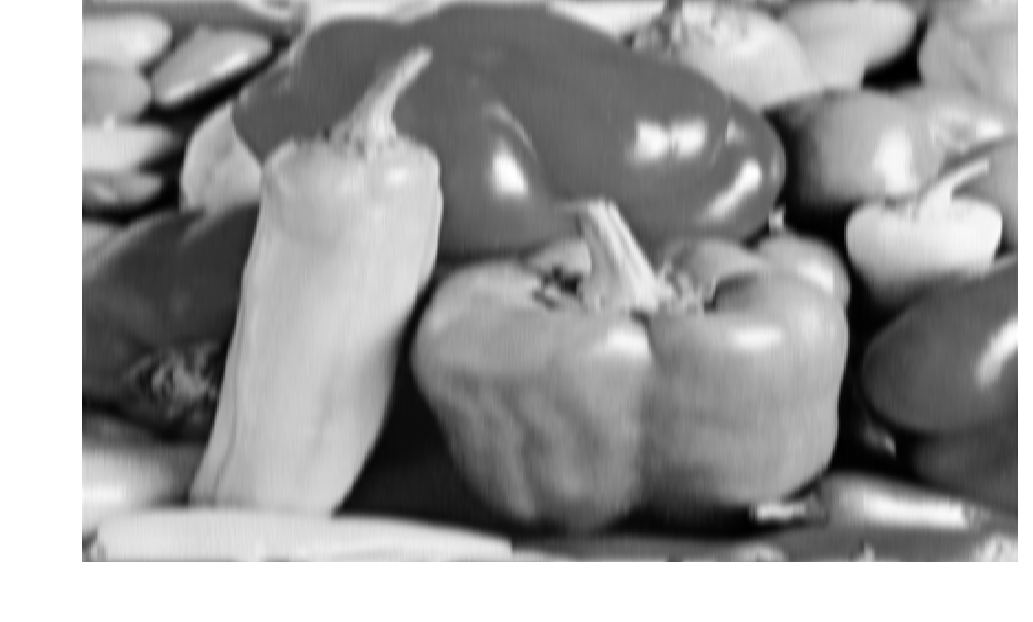} 
    \includegraphics[width=0.29\textwidth,height=0.29\textwidth]{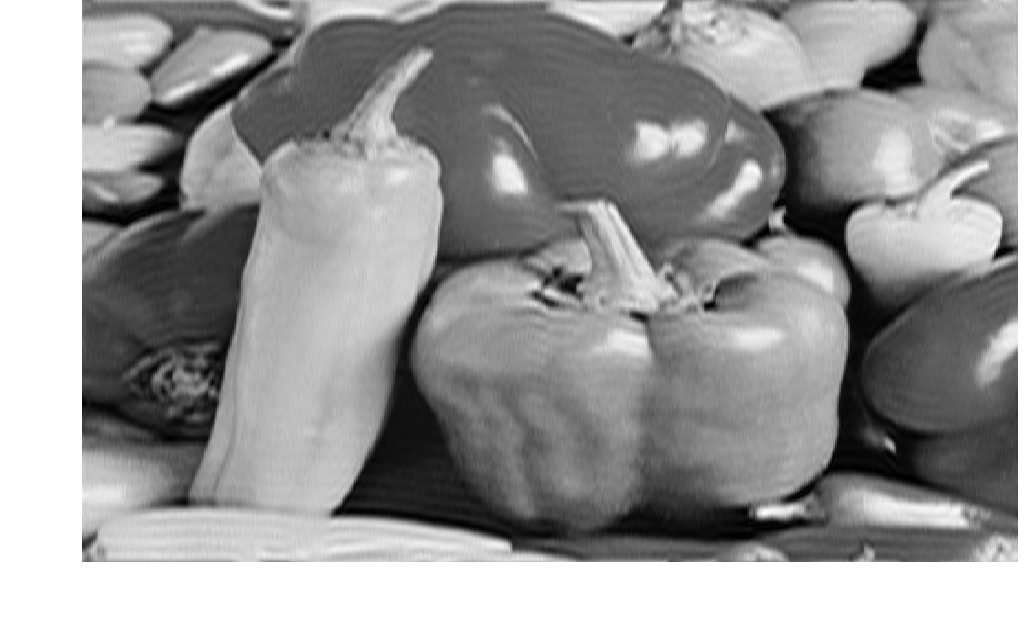}\\
       \includegraphics[scale=0.45]{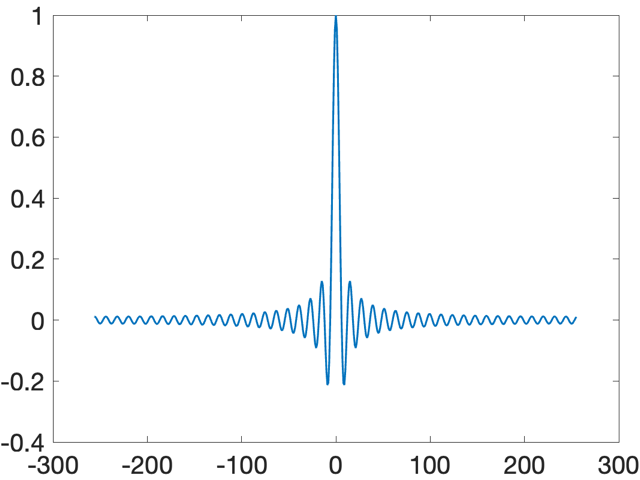}
       \includegraphics[scale=0.45]{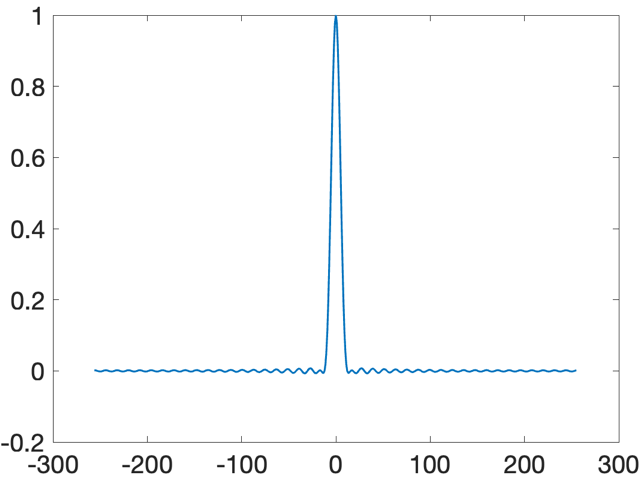}
     \includegraphics[scale=0.45]{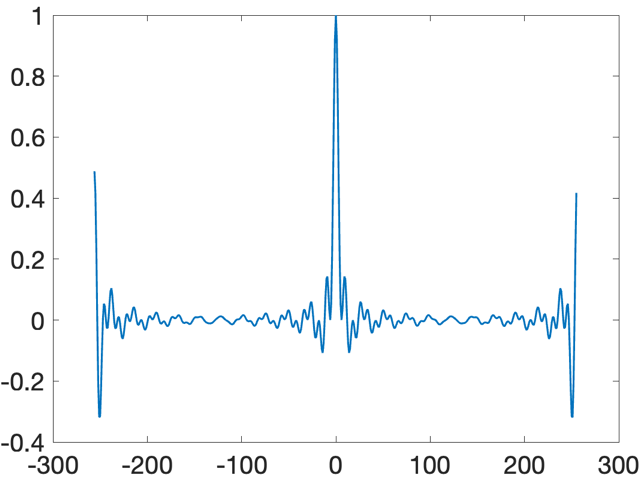}
	\end{center}
\caption{
Top: Reconstructions of the $512 \times 512$ pepper image from Fourier data with reduction rate $r=6$.
Left: 
Approximation with Dirichlet window using $L=\lfloor512/r \rfloor=85$ with PSNR $28.21$; 
Middle: Approximation with Hamming window using $L=85$  with PSNR $25.57$;
Right: Reconstruction by zero refilling with reduction rate  $r=6$ and $L=43$ with PSNR $26.75$.
Bottom: Corresponding representations of the windows $\check{\mathbf p}^{(\Lambda_{L})}$, 
$\check{\mathbf p}^{(Ham)}$  (middle), and $\check{\mathbf p}^{(\Lambda_{1})}$. }
\label{figure1}
\end{figure}

\subsection{Non-adaptive reconstruction approach}
\label{non-adaptive}
Next we shortly show, why a non-data-adaptive reconstruction approach does generally not yield recovery results which outperform the simple zero refilling procedure.

Recall that we want to find an optimal approximation $\tilde{\mathbf A}$ of ${\mathbf A}$ from ${\mathbf P}^{(\Lambda)} \circ \hat{\mathbf A}$. Equivalently, we need to approximate the non-acquired Fourier data to get an optimal approximation $\hat{\tilde{\mathbf A}}$ from ${\mathbf P}^{(\Lambda)} \circ \hat{\mathbf A}$. 

Using a model based on a linear transform, we can rewrite the (non-adaptive) reconstruction problem in a vectorized form as follows: Find  a matrix ${\mathbf Q} \in {\mathbb C}^{NM \times NM}$ to recover $\vec(\tilde{\mathbf A})$ by 
$$\vec(\hat{\tilde{\mathbf A}}) = {\mathbf Q} \, \vec({\mathbf P}^{(\Lambda)} \circ \hat{\mathbf A})
= {\mathbf Q} \, (\vec({\mathbf P}^{(\Lambda)})\circ \vec(\hat{\mathbf A})) = 
{\mathbf Q} \, \diag(\vec({\mathbf P}^{(\Lambda)})) \vec(\hat{\mathbf A}) $$ 
such that 
\begin{align*} \| \vec({\mathbf A}) - \vec(\tilde{\mathbf A})\|_{2} &= 
\| \vec(\hat{\mathbf A}) - \vec(\hat{\tilde{\mathbf A}})\|_{2}
= \| \vec(\hat{\mathbf A}) - {\mathbf Q} \, \diag(\vec({\mathbf P}^{(\Lambda)})) \vec(\hat{\mathbf A})\|_{2} \\
&= \| ({\mathbf I} - {\mathbf Q} \, \diag(\vec({\mathbf P}^{(\Lambda)}))) \vec(\hat{\mathbf A})\|_{2}
\end{align*}
is minimized for arbitrary matrices $\hat{\mathbf A}$.   
Thus, 
$\| {\mathbf I} - {\mathbf Q} \, \diag(\vec({\mathbf P}^{(\Lambda)})) \|_{2}$ has to be minimized.
Denote the components of ${\mathbf Q}$ by $q_{\kk,\elll}$, $\kk, \elll \in \Lambda_{N,M}$, and the diagonal elements of $\diag(\vec({\mathbf P}^{(\Lambda)}))$ by $p^{(\Lambda)}_{\elll}$.
Separating  the diagonal and the non-diagonal entries, we obtain 
\begin{align}\nonumber  
\|{\mathbf I}_{MN} - {\mathbf Q} \, \diag(\vec({\mathbf P}^{(\Lambda)}))\|^2_{2} & = \sum_{\kk, \elll \in \Lambda_{N,M}} |\delta_{\kk,\elll} - q_{\kk,\elll}\,  {p}^{(\Lambda)}_{\elll}|^{2} 
+ \\
&=  \sum_{\elll \in \Lambda} |1 - q_{\elll,\elll}|^{2}+ (NM-|\Lambda|)  +  \sum_{\elll \in \Lambda} \sum_{\substack{\kk \in \Lambda_{N,M}\\ \kk \ne \elll}} |q_{\kk,\elll}|^{2} , \label{na}
\end{align}
where $(NM-|\Lambda|)$ is  the number on non-acquired Fourier components.
Obviously, (\ref{na})  is minimized for ${\mathbf Q}=  \diag(\vec({\mathbf P}^{(\Lambda)}))$, since then the first and the last sum vanish.
 The matrix ${\mathbf Q}=  \diag(\vec({\mathbf P}^{(\Lambda)})$ is also an optimal solution if the Frobenius norm in (\ref{na}) is replaced by the spectral norm. Thus, regarding these norms, there is no better nonadaptive solution than the reconstruction by zero refilling.

\subsection{Low-rank approximation}

For matrix completion problems, often a low-rank constraint has been successfully employed. 
We shortly explain, why a low-rank constraint is unfortunately not helpful to solve our problem of image recovery from structured incomplete Fourier data.

Let $\tilde{\mathbf A}$  denote the wanted  image reconstruction, where we assume that the given constraint ${\mathbf P}^{(\Lambda)} \circ \hat{\tilde{\mathbf A}} = {\mathbf P}^{(\Lambda)} \circ \hat{\mathbf A}$ is satisfied. 
Then, we observe that 
the solution $\tilde{\mathbf A}^{(z)}$ in (\ref{refill}),  obtained by zero refilling, already satisfies
$$ \textstyle \rank(\tilde{\mathbf A}^{(z)}) = \rank({\mathbf F}_{N}^{-1} ( {\mathbf P}^{(\Lambda)} \circ \hat{\mathbf A}) {\mathbf F}_{M}^{-1}) = \rank( {\mathbf P}^{(\Lambda)} \circ \hat{\mathbf A}) \le \frac{N}{r} $$
since at most $\frac{N}{r}$ rows of ${\mathbf P}^{(\Lambda)} \circ \hat{\mathbf A}$ are non-zero rows. Thus, if we request for $\tilde{\mathbf A}$ a low rank being larger than or equal to $\rank( {\mathbf P}^{(\Lambda)} \circ \hat{\mathbf A})$, we obtain the solution $\tilde{\mathbf A}^{(z)}$ in (\ref{refill}), while for requesting a smaller rank, the Fourier constraints can no longer be satisfied exactly. 
Therefore the low-rank constraint seems not to be well applicable.

\subsection{Locally adaptive interpolation in Fourier domain}
\label{ssec:inter}

Some Fourier interpolation methods in MRI, like GRAPPA \cite{grappa}  and SPIRiT \cite{spirit},  use the fully sampled low-pass area to learn interpolation weights to reconstruct the unacquired  data in Fourier domain.  In this section, we summarize these two methods, which are frequently used in parallel MRI.

 Note that in parallel MRI, GRAPPA and SPIRiT make use of different \textit{coils} that gather Fourier data (so-called $k$-space data in MRI) in parallel, something that is absent from our model. Therefore, our reconstruction problem can be regarded as  a special case of GRAPPA and SPIRiT with only one coil. 
The main idea of these methods is to determine suitable weights for a local interpolation scheme in the first step and then to apply these weights to reconstruct the missing Fourier data.  

We start with the idea of \cite{grappa}. 
Let $\mathcal{N} = \Lambda_{2p_1+1, 2p_2+1}$ be a small centered index set (window), i.e., ${\mathcal N} =\{-p_1, \ldots , p_1 \} \times \{-p_2 , \ldots , p_2\}$ with $p_{1}, p_{2} \in {\mathbb N}$ and with $|{\mathcal N}| = (2p_1+1)(2p_2+1)$ indices.
Using the  acquired Fourier data  $\hat{a}_{\nuu}$ of $\hat{\mathbf A}$, 
$\nuu \in \Lambda \times \Lambda_M$, a local interpolation scheme for unacquired data $\hat{a}_{\nuu}$, $\nuu \in (\Lambda_{N} \setminus \Lambda) \times \Lambda_M$, is taken in the form 
\begin{align}\label{interpol1}   \hat{\tilde{a}}_{\nuu} 
= \sum_{\mycom{\jj \in {\mathcal N}}{\nuu + \jj \in \Lambda \times \Lambda_M}} g_{\jj} \, \hat{a}_{\nuu+\jj}
= \sum_{\jj \in {\mathcal P}(\nuu)} g_{\jj} \, \hat{a}_{\nuu+\jj},
 \end{align}
where on the right-hand side only acquired values with indices in the window $\nuu + {\mathcal N}$ come into play.
The index set ${\mathcal P}(\nuu) := \{ \jj \in {\mathcal N}, \, 
{\mathbf j} + \nuu \in \Lambda \times \Lambda_M \} \subset {\mathcal N}$ depends on the location of $\nuu$.
Figure \ref{grappa-fig} illustrates different  index sets ${\mathcal P}(\nuu)$ (green) around the index $\nuu$ of an unacquired pixel value (red) for $p_1=2$, $p_2=3$.
\begin{figure}
\begin{center}
	\includegraphics[scale=0.25]{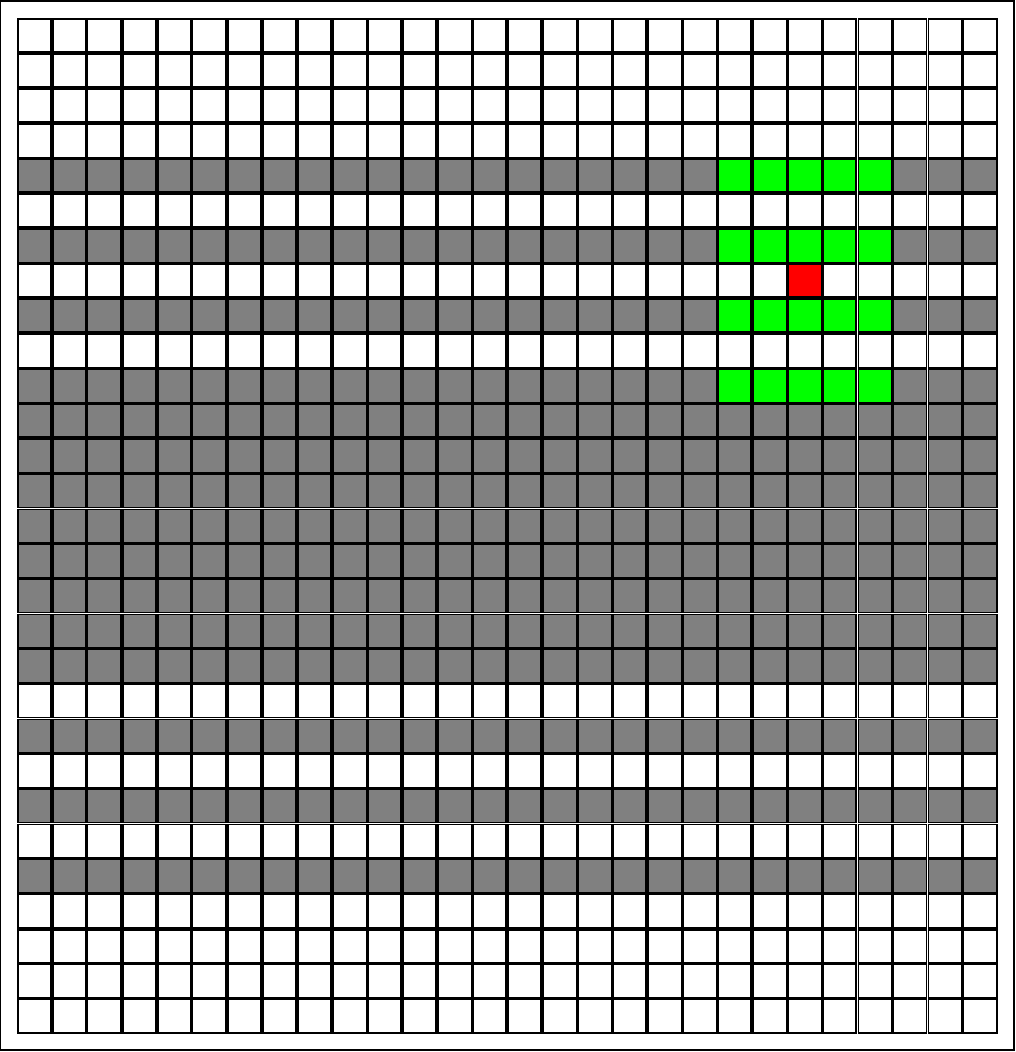}  \qquad 
	\includegraphics[scale=0.25]{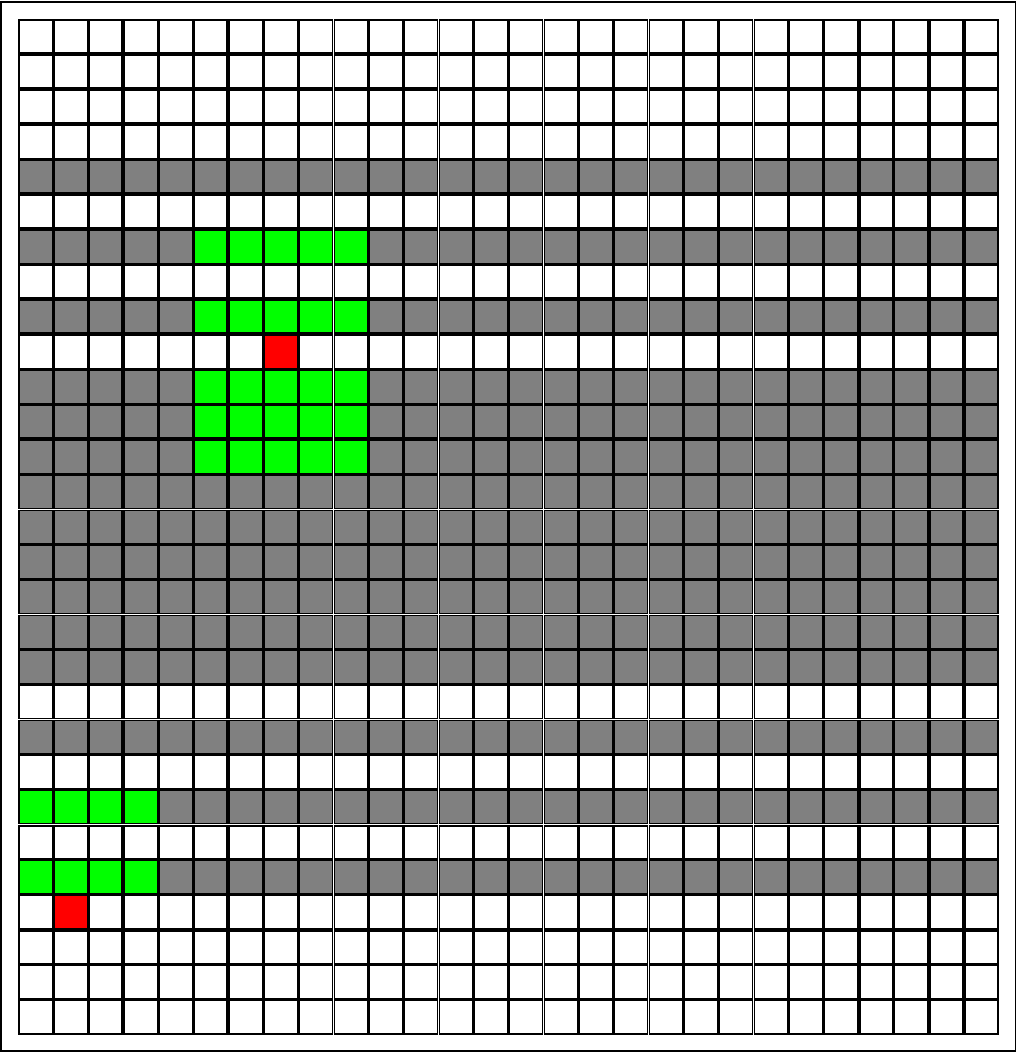} 
	\end{center}
\caption{
Example for different patterns ${\mathcal P}$  for GRAPPA interpolation using local interpolation in a $7 \times 5$ window. All gray- and green-valued pixel values are acquired. Green pixel values are employed to interpolate the red pixel value}
\label{grappa-fig}
\end{figure}

Depending on the location of the pixel values that one  wants to recover, 
a finite number of different patterns ${\mathcal P}= {\mathcal P}(\nuu)$ of acquired neighboring pixel values occurs. 
Next, for every occurring pattern ${\mathcal P} \subset {\mathcal N}$ the (complex) weights $(g_{\jj})_{ \jj \in {\mathcal P}}= (g_{\jj}^{({\mathcal P})})_{\jj \in {\mathcal P}} $ are computed from the given Fourier values in the low-pass area $\Lambda_{L,M} \subset \Lambda \times \Lambda_M$ by solving the least squares problem 
\begin{align}\label{grappa1}
 (g_{\jj}^{({\mathcal P})})_{\jj \in {\mathcal P}} := \argmin_{(g_{\jj})_{\jj \in {\mathcal P}} \in {\mathbb C}^{|{\mathcal P}|}}
  \sum_{\nuu + {\mathcal P} \in (\Lambda_L \times \Lambda_M)}  |\hat{a}_{\nuu} -  
\sum_{\jj \in {\mathcal P}} g_{\jj} \, \hat{a}_{\nuu + \jj} |^2.
\end{align}
Then, all missing pixel values $\hat{\mathbf a}_{\nuu}$ are recovered by (\ref{interpol1}).
Unacquired pixel values $\hat{\mathbf a}_{\nuu}$ with larger row index, whose ${\mathcal N}$ neighborhood does not contain acquired values, stay to be filled with zeros, since we have no neighbour information about these values.

\begin{remark}
In the original paper \cite{grappa}, it is proposed to use several interpolation schemes with small windows, which need not to be centered, and to apply an averaging procedure in the end.
\end{remark}

In \cite{spirit}, a different procedure is proposed  which can also applied to unstructured sampling. Again, it is assumed that the Fourier data $\hat{a}_{\nuu}$ can be approximated by a suitable linear combination of neighboring  Fourier data with indices in a small window $\nuu + {\mathcal N}$. This time, an interpolation scheme for all data,
regardless of being acquired or not, is derived and later taken as a constraint in a minimization problem to recover the missing data.
Using periodic boundary conditions, the weights are supposed to satisfy  the constraints
\begin{align}\label{interspirit}  
 \hat{{a}}_{\nuu}  &=  \textstyle \sum\limits_{\jj \in {\mathcal N} \setminus \{ {\mathbf 0} \} } g_{\jj} \, 
 \hat{a}_{\nuu + \jj} 
= \sum\limits_{\jj' \in \Lambda_{N,M} } g_{\jj' - \nuu} \, 
 \hat{a}_{\jj'}
 , \qquad  \nuu \in \Lambda_{M,N},
 \end{align}
where  $g_{\jj}=0$ for $\jj \not\in {\mathcal N} \setminus \{{\mathbf 0} \}$ and 
$\hat{a}_{\nuu + \jj} = \hat{a}_{\nuu + \jj \, \text{mod}\,  \Lambda_{N,M}}$, $g_{\jj' - \nuu}= g_{\jj' - \nuu \, \text{mod}\, \Lambda_{N,M}}$. 

 In a first step,  the weights $g_{\jj}$, $\jj \in {\mathcal N} \setminus \{ {\mathbf 0} \}$, are computed from the given data in the low-pass area  by solving the least squares problem in (\ref{grappa1}) for the  pattern ${\mathcal P}= {\mathcal N} \setminus \{ {\mathbf 0} \}$.
 Applying a vectorization to the constraint (\ref{interspirit}) it can be rewritten as
\begin{align}\label{eig} \vec (\hat{\mathbf A}) = {\mathbf G} \, \vec(\hat{\mathbf A}),
\end{align}
where ${\mathbf G}:=\big(g_{\jj - \nuu} \big)_{\nuu \in \Lambda_{N,M}, \, \jj \in \Lambda_{M,N}}$ is a (sparse) block Toeplitz matrix. 
To find an  approximation $\hat{\tilde{\mathbf A}}$ of $\hat{\mathbf A}$,   
one considers in \cite{spirit}  an optimization problem of the form
\begin{align}\label{spiritopt} \textstyle 
\min\limits_{\tilde{\mathbf A}}   \Big( \| 
{\mathbf P}^{\Lambda} \circ (\hat{\tilde{\mathbf A}} - \hat{\mathbf A}) \|_2^2 +  \lambda \| ({\mathbf G} - {\mathbf I}_{MN}) \vec(\hat{\tilde{\mathbf A}}) \|_2^2 \Big),
\end{align}
where the first term takes care of the approximation of the given Fourier values and the second ensures (\ref{eig}).
The first term has the vectorized form 
$ \|({\mathbf I}_M \otimes \diag({\mathbf p}^{(\Lambda)})) (\vec(\hat{\tilde{\mathbf A}}) - \vec(\hat{\mathbf A})) \|_2^2$.
The minimization problem (\ref{spiritopt}) can be directly solved to compute $\tilde{\mathbf A}$.
Calculating the gradient with respect to $\vec(\hat{\tilde{\mathbf A}})$, we obtain the large linear system
$$  \Big(({\mathbf I}_M \otimes \diag({\mathbf p}^{(\Lambda)})) + \lambda ({\mathbf G} - {\mathbf I}_{MN})^* ({\mathbf G} - {\mathbf I}_{MN}) \Big)\, 
\vec(\hat{\tilde{\mathbf A}}) =   2({\mathbf I}_M \otimes \diag({\mathbf p}^{(\Lambda)}))   \vec(\hat{\mathbf A}).
$$
This system can be iteratively solved using the fixed-point formulation 
\begin{align*}
\vec(\hat{\tilde{\mathbf A}})_{n+1} &= \textstyle 
\vec(\hat{\tilde{\mathbf A}})_{n} - \mu \Big(({\mathbf I}_M \otimes \diag({\mathbf p}^{(\Lambda)}))(\vec(\hat{\tilde{\mathbf A}})_n - \vec(\hat{\mathbf A})) \\
 & \textstyle \quad + 
\lambda ({\mathbf G} - {\mathbf I}_{MN})^* ({\mathbf G} - {\mathbf I}_{MN})\vec(\hat{\tilde{\mathbf A}})_n \Big),
\end{align*} 
which converges if $\mu>0$ is taken small enough.

Unfortunately, the two interpolation methods presented in the section usually do not provide satisfying reconstruction results in our setting compared to zero refilling and compared to the further recovering algorithms that we will consider in the next two sections.  This is not surprising, if we look back at our results in  Theorem \ref{thm:linear}
and Section \ref{non-adaptive}, since adaptivity only relies here to the data in the low-pass area and the 2D DFT data usually do not  possess small total variation  that would ensure  a good approximation  by local interpolation.

\section{Reconstruction by TV functional minimization}
\label{sec:TV}
In this section we want to adapt  a technique to our setting which is  often used in image denoising and image reconstruction, namely the application of a regularizer based on minimizing the total variation. In contrast to other approaches based on  TV regularization using randomly sampled measurements \cite{Candes06} or Fourier measurements on a polar grid \cite{Block07,Yang10,Knoll11}, our 2D-DFT measurements are acquired  on the the sampling pattern $\Lambda \times \Lambda_M$ in  (\ref{set}). We will adapt  the primal-dual algorithm of Chambolle and Pock \cite{CP10,CP16} to minimize the obtained functional, where in our case one of the fixed-point iterations is performed in Fourier domain.
We assume that the image ${\mathbf A}$ is real-valued, otherwise we reconstruct here the real and imaginary part of the ${\mathbf A}$ separately.

We start with  some notations.
Let 
\begin{align}\label{DN}
{\mathbf D}_{N}:= \left(\begin{array}{ccccc} 
-1 & 1 &  & & \\
 & -1 & 1 & & \\
  & & \ddots & \ddots &  \\
  & & & -1 & 1 \\
  & & & & 0 \end{array} \right) \in {\mathbb R}^{N \times N}
  \end{align}
  denote the difference matrix, then the discrete gradient of ${\mathbf A} \in {\mathbb R}^{N \times M}$ is defined as 
  $\nabla: {\mathbb R}^{N \times M} \to {\mathbb R}^{2N \times M}$, 
  \begin{align*} \textstyle  \nabla ({\mathbf A}):= \left( \begin{array}{c} {\mathbf D}_{N} {\mathbf A} \\ {\mathbf A} {\mathbf D}_{M}^{T} \end{array} \right).
  \end{align*}
 For  ${\mathbf A} = (a_{\kk})_{\kk \in \Lambda_{N,M}} \in {\mathbb R}^{N \times M}$
 and ${\mathbf B} = (b_{\kk})_{\kk \in \Lambda_{N,M}} \in {\mathbb R}^{N \times M}$ 
  let 
\begin{align*}
 \|{\mathbf A}\|_{2} := \textstyle  \sum\limits_{\kk \in \Lambda_{N,M}} |a_{\kk}|^2, 
\quad 
\langle {\mathbf A}, {\mathbf B} \rangle := \textstyle \sum\limits_{\kk \in \Lambda_{N,M}} {a_{\kk}} b_{\kk}. %
\end{align*}
Further, for  ${\mathbf X}= \left( \!\!\!\begin{array}{c} {\mathbf X}_{1} \\ {\mathbf X}_{2} \end{array}\!\!\! \right) \in {\mathbb R}^{2N \times M}$ with ${\mathbf X}_{1} = (x_{\kk,1})_{\kk \in \Lambda_{N,M}}$, ${\mathbf X}_{2} = (x_{\kk,2})_{\kk \in \Lambda_{N,M}}$ we define 
\begin{align} \label{inf}
 \|{\mathbf X}\|_{1} :=  \textstyle \sum\limits_{\kk \in \Lambda_{N,M}} \sqrt{x_{\kk,1}^{2} + x_{\kk,2}^{2}},  \qquad 
\|{\mathbf X}\|_{\infty} := \max\limits_{\kk \in \Lambda_{N,M}} \sqrt{x_{\kk,1}^{2} + x_{\kk,2}^{2}}. 
\end{align}

To find a good image reconstruction $\tilde{\mathbf A}$ from the incomplete data ${\mathbf P}^{(\Lambda)} \hat{\mathbf A}$ in (\ref{aufgabe})
our goal is to compute the minimizer of the functional 
\begin{align}\label{primal}
\textstyle \min\limits_{\tilde{\mathbf A} \in {\mathbb R}^{N \times M}} \Big( \frac{\lambda}{2} \|{\mathbf P}^{(\Lambda)} \circ (\hat{\tilde{\mathbf A}} - \hat{\mathbf A}) \|^{2}_{2} +  \| \nabla (\tilde{\mathbf A}) \|_{1} \Big).
\end{align}
The regularization term $ \| \nabla (\tilde{\mathbf A}) \|_{1}$ is called \textit{discrete total variation} of $\tilde{\mathbf A}$.
The regularization parameter $\lambda$ has to be taken large to ensure that $\|{\mathbf P}^{(\Lambda)} \circ (\hat{\tilde{\mathbf A}} - \hat{\mathbf A}) \|^2_{2}$ is very small. 

As in \cite{CP10}, this problem is first transferred into a saddle-point problem. For this purpose, we introduce the mappings $G:{\mathbb R}^{M \times N} \to {\mathbb R}$  and $H: {\mathbb R}^{2N \times M} \to {\mathbb R}$  given by 
 \begin{align*}
\textstyle 
 G(\tilde{\mathbf A}) := \frac{\lambda}{2} \|{\mathbf P}^{(\Lambda)} \circ (\hat{\tilde{\mathbf A}} - \hat{\mathbf A}) \|^2_{2}, \qquad
  H({\mathbf C} ) := \|{\mathbf C} \|_{1}, \qquad 
 \tilde{\mathbf A} \in {\mathbb R}^{N \times M},\quad  {\mathbf C} \in {\mathbb R}^{2N \times M},
\end{align*}
such that (\ref{primal}) can shortly be written $\min\limits_{\tilde{\mathbf A} \in {\mathbb R}^{N \times M}} (H(\nabla \tilde{\mathbf A}) + G(\tilde{\mathbf A}))$.
 With $H^{*} \!:  {\mathbb R}^{2N \times M} \to {\mathbb R}$ we denote the convex conjugate of the convex function $H$, 
and  obtain in our case
\begin{align}\label{F*}
H^{*}({\mathbf X}) &= \textstyle \max\limits_{{\mathbf C} \in {\mathbb R}^{2N \times M}} \Big(\langle {\mathbf X}, {\mathbf C} \rangle - \|{\mathbf C} \|_{1} \Big)
= \left\{ \begin{array}{ll} 0 & \text{if} \; \|{\mathbf X} \|_{\infty} \le 1, \\
 + \infty & \text{otherwise,} \end{array} \right. 
 \end{align}
with $\|{\mathbf X} \|_{\infty}$ as in (\ref{inf}), see also \cite{CP10}.  
The Fenchel-Moreau theorem, see e.g.\ \cite{Borwein}, implies  that $H^{**} = H$. Therefore, with ${\mathbf C}= \nabla (\tilde{\mathbf A})$
we also have 
\begin{align*} 
 \textstyle H({\mathbf C}) =H(\nabla(\tilde{\mathbf A})) = \|\nabla (\tilde{\mathbf A})\|_{1}= \max\limits_{{\mathbf X} \in {\mathbb R}^{2N \times M}} \Big( \langle \nabla (\tilde{\mathbf A}), {\mathbf X} \rangle - H^{*}({\mathbf X}) \Big),
\end{align*}
and (\ref{primal}) can equivalently be written as a saddle-point problem
\begin{align}\label{saddle}
\textstyle \max\limits_{{\mathbf X} \in {\mathbb R}^{2N \times M}} \min\limits_{\tilde{\mathbf A} \in {\mathbb R}^{N \times M}} \Big(  G(\tilde{\mathbf A}) + \langle \nabla(\tilde{\mathbf A}), {\mathbf X} \rangle   - H^{*}({\mathbf X}) \Big).
\end{align}
Taking the subgradients with respect to ${\mathbf X}$ and $ \tilde{\mathbf A}$, it follows that any solution $( {\mathbf X}, \tilde{\mathbf A})$ of (\ref{saddle}) necessarily satisfies 
${\mathbf 0} \in \Big(\nabla(\tilde{\mathbf A}) - \partial H^{*}({\mathbf X})\Big)$ and 
 ${\mathbf 0}  \in \Big(\nabla^{*} ({\mathbf X}) + \partial G(\tilde{\mathbf A})\Big)$, i.e.,  
\begin{align}\label{partial}
\textstyle  \nabla(\tilde{\mathbf A})  \in \partial H^{*}({\mathbf X}), \qquad - \nabla^{*} ({\mathbf X}) \in \partial G(\tilde{\mathbf A}).
\end{align}
Here, $\nabla^{*}$ is defined by $\langle \tilde{\mathbf A}, \nabla^{*}({\mathbf X}) \rangle = \langle \nabla(\tilde{\mathbf A}), {\mathbf X} \rangle $, i.e., $\nabla^{*}({\mathbf X}) = {\mathbf D}_{N} {\mathbf X}_{1} + {\mathbf X}_{2} {\mathbf D}_{M}^{T}$. 
The first condition in (\ref{partial}) yields by multiplying  with $\sigma >0$ and adding ${\mathbf X}$ at both sides
\begin{align*} \textstyle {\mathbf X} + \sigma \nabla(\tilde{\mathbf A})  \in {\mathbf X} + \sigma \partial H^{*}({\mathbf X}) = ({\mathbf I} + \sigma \partial H^{*}) ({\mathbf X}),  
\end{align*} 
 where ${\mathbf I}$ denotes the identity operator.
Applying a similar procedure to the second condition in (\ref{partial}) with $\tau >0$, we arrive at  the two fixed-point equations, 
\begin{align}\label{fix1}
 {\mathbf X}&= \textstyle ({\mathbf I} + \sigma \partial H^{*})^{-1} ({\mathbf X} + \sigma \nabla(\tilde{\mathbf A})), \\
 \label{fix2}
\tilde{\mathbf A}&= \textstyle ({\mathbf I} + \tau \partial {G})^{-1} (\tilde{\mathbf A} -\nabla^{*} ({\mathbf X})).
\end{align}
The primal-dual algorithm introduced in \cite{CP10}, which we adapt here to our setting,  is based on alternating application of the two corresponding fixed-point iterations for the primal variable $\tilde{\mathbf A}$ and the dual variable ${\mathbf X}$.
The two resolvent operators (or proximity operators) $({\mathbf I} + \sigma \partial H^{*})^{-1}$ and $({\mathbf I} + \tau \partial {G})^{-1}$ occurring in (\ref{fix1})-(\ref{fix2}) are determined  by
\begin{align*}
({\mathbf I} + \sigma \partial H^{*})^{-1}({\mathbf X}) &:= \textstyle \argmin\limits_{{\mathbf Y} \in {\mathbb R}^{2N \times M}}
\Big( \frac{1}{2\sigma} \|{\mathbf X} -{\mathbf Y}\|^2_{2} + H^{*}({\mathbf Y})\Big), \\
({\mathbf I} + \tau \partial {G})^{-1}(\tilde{\mathbf A}) &:= \textstyle \argmin\limits_{{\mathbf Z} \in {\mathbb R}^{N \times M}}
\Big( \frac{1}{2\tau} \|\tilde{\mathbf A} -{\mathbf Z}\|^2_{2} + G({\mathbf Z}) \Big),
\end{align*}
 and can be directly computed. 
Indeed, for $\tilde{\mathbf Y}= \argmin\limits_{{\mathbf Y} \in {\mathbb R}^{2N \times M}}
\Big( \frac{1}{2\sigma} \|{\mathbf X} -{\mathbf Y}\|_{F}^{2} + H^{*}({\mathbf Y})\Big)$ it follows 
$$ \textstyle {\mathbf 0} \in \partial \Big( \frac{1}{2\sigma} \|{\mathbf X} -\tilde{\mathbf Y}\|_{F}^{2} + H^{*}(\tilde{\mathbf Y})\Big)
= \frac{1}{\sigma} (\tilde{\mathbf Y}- {\mathbf X}) + \partial H^{*}(\tilde{\mathbf Y}) ,$$
i.e., ${\mathbf X} \in \tilde{\mathbf Y} + \sigma \partial H^{*}(\tilde{\mathbf Y})= ({\mathbf I} + \sigma \partial H^{*}) (\tilde{\mathbf Y})$, and thus $\tilde{\mathbf Y} = ({\mathbf I} + \sigma \partial H^{*})^{-1}({\mathbf X})$.

In our special case, we find from the definition of $H^{*}$ in (\ref{F*}) that
\begin{align*} 
({\mathbf I} + \sigma \partial H^{*})^{-1}({\mathbf X}) &= \textstyle \argmin\limits_{\mycom{{\mathbf Y} \in {\mathbb R}^{2N \times M}}{\|{\mathbf Y} \|_{\infty} \le 1}} \Big( \frac{1}{2\sigma} \|{\mathbf X} -{\mathbf Y}\|_{F}^{2} \Big), 
\end{align*}
that is, $({\mathbf I} + \sigma \partial H^{*})^{-1}({\mathbf X})$ is the projection of ${\mathbf X}$ onto the unit ball in ${\mathbb R}^{2N \times M}$ with respect to the Frobenius norm, i.e., 
$ ({\mathbf I} + \sigma \partial H^{*})^{-1}({\mathbf X}) = \tilde{\mathbf Y} = \left( \begin{array}{c} \tilde{\mathbf Y}_{1} \\ \tilde{\mathbf Y}_{2} \end{array} \right)$ with components 
$$ \textstyle \tilde{y}_{\kk,1} = \frac{x_{\kk,1}}{\max\{ 1, \sqrt{x_{\kk,1}^{2} + x_{\kk,2}^{2}} \}}, \qquad 
 \tilde{y}_{\kk,2} = \frac{x_{\kk,2}}{\max\{ 1, \sqrt{x_{\kk,1}^{2} + x_{\kk,2}^{2}} \}}, \quad \kk \in \Lambda_{N,M}. $$
To compute $({\mathbf I} + \tau \partial {G})^{-1}(\tilde{\mathbf A})$, we recall that the multiplication with an orthonormal matrix leaves the Frobenius norm invariant, such that $\|\tilde{\mathbf A} -{\mathbf Z}\|_{F}^{2}
= \|{\mathbf F}_{N}(\tilde{\mathbf A} -{\mathbf Z}) {\mathbf F}_{M} \|_{F}^{2} = \|\hat{\tilde{\mathbf A}} -\hat{\mathbf Z}\|_{F}^{2}$. Since $\tilde{\mathbf A}$ and ${\mathbf A}$ are both real matrices, we obtain
\begin{align*}
({\mathbf I} + \tau \partial {G})^{-1}(\tilde{\mathbf A}) &=
\textstyle \argmin\limits_{{\mathbf Z} \in {\mathbb R}^{N \times M}}
\Big( \frac{1}{2\tau} \|\tilde{\mathbf A} -{\mathbf Z}\|_{F}^{2} + \frac{\lambda}{2} \|{\mathbf P}^{(\Lambda)} \circ (\hat{{\mathbf Z}} - \hat{\mathbf A}) \|_{F}^{2} \Big) 
\end{align*}
where 
\begin{align*} \textstyle 
\min\limits_{{\mathbf Z} \in {\mathbb R}^{N \times M}}
\!\!\Big(\! \frac{1}{2\tau} \|\tilde{\mathbf A} -{\mathbf Z}\|_{F}^{2}\! + \!\frac{\lambda}{2}\! \|{\mathbf P}^{(\Lambda)}\! \circ \!(\hat{{\mathbf Z}} - \hat{\mathbf A}) \|_{F}^{2}\! \Big)\!\!
 = \!\! \min\limits_{\hat{\mathbf Z} \in {\mathbb C}^{N \times M}}\!\!
\Big(\! \frac{1}{2\tau} \|\hat{\tilde{\mathbf A}} -\hat{\mathbf Z}\|_{F}^{2} + \frac{\lambda}{2} \|{\mathbf P}^{(\Lambda)}\! \circ \!(\hat{{\mathbf Z}} - \hat{\mathbf A}) \|_{F}^{2} \!\Big).
\end{align*}
The solution $\hat{\mathbf Z}$ of the minimization problem in Fourier domain satisfies the necessary condition
$(\hat{\mathbf Z}- \hat{\tilde{\mathbf A}}) + \tau \lambda {\mathbf P}^{(\Lambda)} \circ (\hat{\mathbf Z}-\hat{\mathbf A}) ={\mathbf 0}$, i.e., 
\begin{align}\label{FB}
\hat{\mathbf Z} = (\hat{\tilde{\mathbf A}}  + \tau \lambda {\mathbf P}^{(\Lambda)} \circ \hat{\mathbf A}) / ({\mathbf E} + \tau\lambda {\mathbf P}^{(\Lambda)}),
\end{align}
where $/$ denotes the componentwise division and ${\mathbf E}$ is the $(N \times M)$-matrix of ones. Thus, we finally obtain 
\begin{align*}
({\mathbf I} + \tau \partial {G})^{-1}(\tilde{\mathbf A}) &= {\mathbf F}_{N}^{-1} \hat{\mathbf Z} {\mathbf F}_{M}^{-1} = {\mathbf F}_{N}^{-1}((\hat{\tilde{\mathbf A}}  + \tau \lambda {\mathbf P}^{(\Lambda)} \circ \hat{\mathbf A}) / ({\mathbf E} + \tau\lambda {\mathbf P}^{(\Lambda)})) {\mathbf F}_{M}^{-1}.
\end{align*}
The  primal-dual algorithm  \cite{CP10} can for our setting be summarized as follows, where here  one fixed-point equation is solved in the Fourier domain.

{\small
\begin{algorithm} (Reconstruction from incomplete Fourier data using TV minimization)
\\
\textbf{Input:} incomplete Fourier data ${\mathbf P}^{(\Lambda)} \circ \hat{\mathbf A}$ of ${\mathbf A} \in {\mathbb R}^{N \times M}$ with 
$N=2n$, $M=2m$,  \\
\phantom{\textbf{Input:}} $N$ multiple of $8$, $L=2\ell+1 <N$,\\
\phantom{\textbf{Input:}} $N_{I}$ number of iterations,\\
\phantom{\textbf{Input:}} parameters $\tau>0$, $\sigma >0$, $\theta \in [0,1]$, $\lambda \gg 0$. 

\noindent
\textbf{Initialization:} ${\mathbf A}^{(0)} := {\mathbf F}_{N}^{-1}({\mathbf P}^{(\Lambda)} \circ \hat{\mathbf A}) {\mathbf  F}_{M}$, ${\mathbf X}^{(0)} = \left(\! \begin{array}{c} {\mathbf X}_{1}^{(0)} \\ {\mathbf X}_{2}^{(0)} \end{array}\!\right)
:= \nabla({\mathbf A}^{(0)}) = \left(\! \begin{array}{c} {\mathbf D}_{N} {\mathbf A}^{(0)} \\
{\mathbf A}^{(0)} {\mathbf D}_{M} \end{array} \!\right)$.\\

\noindent
For $j=0:N_{I}-1$ do 
\begin{enumerate}
\item Compute $\nabla({\mathbf A}^{(j)}) = \left( \!\!\begin{array}{c} {\mathbf D}_{N} {\mathbf A}^{(j)} \\
{\mathbf A}^{(j)} {\mathbf D}_{M} \end{array} \!\! \right)$ and
apply one fixed-point iteration step to solve $(\ref{fix1})$: 
Compute ${\mathbf X}^{(j+1)} \in {\mathbb C}^{2N \times M}$, 
\begin{align*} \textstyle {\mathbf X}^{(j+1)} := ({\mathbf X}^{(j)} + (\sigma \nabla({\mathbf A}^{(j)})) / \max\{ 1, \|{\mathbf X}^{(j)} + \sigma \nabla({\mathbf A}^{(j)}) \|_{\infty} \},
\end{align*}
where $/$ denotes the pointwise division and $ \|{\mathbf X}^{(j)} + \nabla({\mathbf A}^{(j)}) \|_{\infty}$  is defined according to $(\ref{inf})$.
\item Apply one fixed-point iteration step to solve $(\ref{fix2})$:\\
Write  ${\mathbf X}^{(j+1)} = \left( \begin{array}{c} {\mathbf X}_{1}^{(j+1)} \\ {\mathbf X}_{2}^{(j+1)} \end{array} \right)$ with 
${\mathbf X}_{1}^{(j+1)},\, {\mathbf X}_{2}^{(j+1)} \in {\mathbb C}^{N \times M}$
and compute
$$ \hat{\tilde{\mathbf A}}^{(j+1)} := {\mathbf F}_{N} ({\mathbf A}^{(j)}- \nabla^{*} {\mathbf X}^{(j+1)}) {\mathbf F}_{M} = {\mathbf F}_{N} ({\mathbf A}^{(j)}- ({\mathbf D}_{N}{\mathbf X}_{1}^{(j+1)}+ {\mathbf X}_{2}^{(j+1)} {\mathbf D}_{M}^{T})) {\mathbf F}_{M}.$$
Compute $\hat{\mathbf A}^{(j+1)} := (\hat{\tilde{\mathbf A}}^{(j+1)}  + \tau \lambda {\mathbf P}^{(\Lambda)} \circ \hat{\mathbf A}) / ({\mathbf E} + \tau\lambda {\mathbf P}^{(\Lambda)})$
with notations as in $(\ref{FB})$.\\
Compute ${\mathbf A}^{(j+1)} := {\mathbf F}_{N}^{-1} \hat{\mathbf A}^{(j +1)} {\mathbf F}_{M}^{-1}$.
\item Update ${\mathbf A}^{(j+1)} := {\mathbf A}^{(j+1)} + \theta ({\mathbf A}^{(j+1)} - {\mathbf A}^{(j)})$.
\end{enumerate}
end(for) \\
\textbf{Output:} ${\mathbf A}^{(PD)} ={\mathbf A}^{(N_{I})}$.
\label{algTV}
\end{algorithm}
}
As we will see in Section \ref{sec:num}, this algorithm usually provides very good reconstruction results 
which significantly outperform the reconstruction by zero refilling and the  interpolation algorithms from Section \ref{sec2} for reduction rates $r>2$.
Moreover, it is not restricted to the special sampling scheme, which we focussed on in this paper.
The convergence of the iteration is ensured for parameters $8 \tau \sigma <1$ as shown in \cite{C04,CP10}. 
In our implementation  we always use $\theta=1$ and $\sigma = 0.01 + \frac{1}{8\tau}$ as suggested by Gilles \cite{Gilles}.

\section{Hybrid method}
\label{sec:hybrid}

While Algorithm \ref{algTV} usually provides a very good performance,  we want to improve the reconstruction result further by incorporating our knowledge on the structure of the given set of acquired Fourier data. 
Algorithm \ref{algTV} tends to provide reconstruction results  containing staircasing artifacts and with a total variation which is much smaller than that of the original image. For example, for the normalized ''cameraman'' image, see Figure \ref{figure2} and Table \ref{tab1}, the total variation, i.e., the sum of all absolute values of $\nabla({\mathbf A})$, is $9060.3$, 
while for the resulting image of Algorithm \ref{algTV}  in Figure \ref{figure2} (bottom, middle) the total variation 
 is  $5597.3$ for $\lambda=100$. For larger $\lambda$ the total variation increases, we get $6687.9$ for $\lambda=1000$.  Further, the TV reconstruction tends to have an incorrect ''distribution'' of  corresponding index values $\tilde{a}_{k_1,k_2}$ and $\tilde{a}_{k_1-n,k_2}$ at some places in  the upper and the lower half of the image. This effect is already strongly reduced compared to the zero refilling reconstruction,  see e.g.\ Figure \ref{figure4},  but it increases for larger $\lambda$. 
We recall that this issue is due to the sampling scheme that we have at hand.

Our hybrid algorithm is an iterative procedure that locally enlarges the total variation of the image.
For this purpose, we start with a smooth approximation $\tilde{\mathbf A}^{(0)}$, which is either obtained by Algorithm \ref{algTV} with a regularization parameter $\lambda$, which is not too large, or by applying a linear smoothing procedure to the obtained approximation  $\tilde{\mathbf A}^{(PD)}$ in a first step.
Then this initial image $\tilde{\mathbf A}^{(0)}$ 
 already provides some knowledge about important local total variations (as edges) of ${\mathbf A}$, but  does hardly contain undesirable artifacts caused by badly estimated sums $a_{k_1,k_2} + a_{k_1-n,k_2}$ in the upper and the lower half of the image (as it happens e.g.\ in Figure \ref{figure2} (right) for zero refilling).
 The decision, where the total variation of the image approximation should be enlarged, will be taken by 
comparing a median local total variation (MTV) for every pixel value  in the upper half of the image $\tilde{\mathbf A}^{(0)}$ with the MTV  of the corresponding pixel value in the lower half of the image.

To improve the approximation $\tilde{\mathbf A}^{(j)}$ in the $j$-th iteration step, 
we consider the difference image ${\mathbf R}^{(j)}$ given by
$$\hat{\mathbf R}^{(j)} := {\mathbf P}^{(\Lambda)} \circ (\hat{\mathbf A} - \hat{\tilde{\mathbf A}}^{(j)}). $$
Then, obviously, $\tilde{\mathbf A}^{(j)}+ {\mathbf R}^{(j)}$ satisfies ${\mathbf P}^{(\Lambda)} \circ (\hat{\tilde{\mathbf A}}^{(j)}+ \hat{\mathbf R}^{(j)} )= {\mathbf P}^{(\Lambda)} \circ \hat{\mathbf A}$. 
To update the image ${\tilde{\mathbf A}}^{(j)}$, we proceed as follows. 
If the MTV of  $\tilde{a}_{k_1,k_2}^{(0)}$  in the upper half of $\tilde{\mathbf A}^{(0)}$ is (significantly) 
larger than the MTV of  $\tilde{a}_{k_1-n,k_2}^{(0)}$ in the lower half, then 
we add the component
$r_{k_1,k_2}^{(j)}$ (or even  an amplification $\mu r_{k_1,k_2}^{(j)}$ with $\mu >1$) of  ${\mathbf R}^{(j)}$ to $a_{k_1,k_2}^{(j)}$
while leaving $\tilde{a}_{k_1-n,k_2}^{(j)}$ (almost) untouched, otherwise we add $\mu r_{k_1-n,k_2}^{(j)}$ to $\tilde{a}_{k_1-n,k_2}^{(j)}$ and (almost) do not change $\tilde{a}_{k_1,k_2}^{(j)}$. If the  MTV  for $\tilde{a}_{k_1,k_2}^{(0)}$ and $\tilde{a}_{k_1-n,k_2}^{(0)}$ is almost of the same size, we add the corresponding (weighted) components of ${\mathbf R}^{(j)}$ at both positions.
The rationale behind this procedure is the following.
If the local total variation in a neighborhood of  a pixel value almost vanishes, then this indicates that the  image is locally smooth, i.e., the local total variation should not be enlarged, and the corresponding pixel value is kept, 
whereas if the local total variation in a neighborhood of a pixel value is large, then this indicates an important feature and the local total variation should be  enlarged. 
We will show that this iteration leads to an image reconstruction $\tilde{\mathbf A}$ that satisfies the Fourier data constraints (\ref{aufgabe}).

To compute the MTV, we apply the following formulas. First we compute the local total variations of $\tilde{\mathbf A}^{(0)}$ at all pixels $(k_1,k_2)$, 
\begin{align}\label{lTV}
\textstyle  \hspace*{-3mm} \text{TV}^{(\textrm{loc})}(k_1,k_2) :=\!\!\! \sum\limits_{j_2=-1}^{1} \!|\tilde{a}_{k_1,k_2}^{(0)}- \tilde{a}_{k_1,k_2-j_2}^{(0)}| + \!\!\sum\limits_{j_1=-1}^2 \sum\limits_{j_2=-1}^{1} \!|\tilde{a}_{k_1-j_1+1,k_2-j_2}^{(0)}- \tilde{a}_{k_1-j_1,k_2-j_2}^{(0)}|,
\end{align}
where for boundary pixels only existing neighbor values are taken.
Then we compute the median of the $\text{TV}^{(\textrm{loc})}$ values in a fixed window $[-\gamma_1,\gamma_1] \times [-\gamma_2,\gamma_2]$ around $(k_1,k_2)$, 
\begin{align}\label{MTV} \textrm{MTV}(k_1,k_2) := \text{median}( (\text{TV}^{(\textrm{loc})}(k_1+j_1, k_2+j_2)_{j_1=-\gamma_1,j_2=-\gamma_2}^{\gamma_1,\gamma_2})
\end{align}
for $k_1 = -n \ldots , n-1$, $k_2=-m, \ldots , m-1$, where for boundary pixels only the remaining values of the window $[-\gamma_1, \gamma_1] \times [-\gamma_2, \gamma_2]$ are involved.
At each iteration step, we can either always take the same local TV$^{(\textrm{loc})}$ and MTV values obtained from the initial image $\tilde{\mathbf A}^{(0)}$, or update these values using $\tilde{\mathbf A}^{(j)}$. 
The image update $\tilde{{\mathbf A}}^{(j+1)}$ is then derived by 
$$ \tilde{{\mathbf A}}^{(j+1)} = \tilde{{\mathbf A}}^{(j)} + \mu ({\mathbf W} \circ {\mathbf R}^{(j)})$$
with $\mu \in [1, 2)$ and a weight matrix ${\mathbf W}= (\w_{k_1,k_2})_{k_1=-n,k_2=-m}^{n-1,m-1} \in {\mathbb R}^{N \times M}$ which is determined  according to the MTV values at every pixel, 
\begin{align} \label{ww} \w_{k_1,k_2} := \left\{ \begin{array}{ll}
1-\epsilon &  |MTV(k_1,k_2)| > 1.5 |MTV(k_1 \pm n, k_2)|, \\
\epsilon &   |MTV(k_1\pm n,k_2)| >  1.5 |MTV(k_1, k_2)|, \\
\frac{MTV(k_1,k_2)}{MTV(k_1,k_2) + MTV(k_1 \pm n,k_2)} & \text{otherwise}, 
\end{array} \right. 
\end{align}
where $\pm$ means that we take $+$ for $k_1 <0$ and $-$ for $k_1 \ge 0$.
The parameter $\epsilon>0$ is taken to be small, in the numerical experiments we have used $\epsilon= 0.05$ or $\epsilon=0.1$. 
This procedure is repeated  until the remainder ${\mathbf R}^{(j)}$ is close to the zero matrix.
The algorithm is summarized as follows.

{\small
\begin{algorithm} (Image reconstruction  improvement from incomplete Fourier data) \\
\textbf{Input:} incomplete Fourier data ${\mathbf P}^{(\Lambda)} \circ \hat{\mathbf A}$ of ${\mathbf A} \in {\mathbb R}^{N \times M}$ with 
$N=2n$, $M=2m$,  \\
\phantom{\textbf{Input:}}  $\tilde{\mathbf A}^{(PD)}$ reconstructed image of 
Algorithm $\ref{algTV}$. \\
\phantom{\textbf{Input:}} $N$ multiple of $8$, $L=2\ell+1 <N$,\\
\phantom{\textbf{Input:}} $N_{I}$ number of iterations for reconstruction,\\
\phantom{\textbf{Input:}} $N_s$ number of iterations for linear smoothing,\\
\phantom{\textbf{Input:}} $\mu \in [1,2)$ parameter for frequency reconstruction, \\
\phantom{\textbf{Input:}} $(\gamma_1,\gamma_2)$ local window size for local TV computation,\\
\phantom{\textbf{Input:}} $\epsilon>0$ (e.g.\ $0.05 \le \epsilon \le 0.1$).
\begin{enumerate}
\item (Optional) Apply a smoothing filter to  $\tilde{\mathbf A}^{(PD)} = (\tilde{a}_{k_1,k_2}^{(PD,0)})_{k_1=-n,k_2=-m}^{n-1,m-1}.$ \\ 
For $s=0:N_s-1$ \\
\null \quad For  $k_2=-m:m-1$\\
$$ \tilde{a}_{k_1,k_2}^{(PD,s+1)} := \left\{ \begin{array}{ll}
\frac{1}{4} (\tilde{a}_{k_1-1,k_2}^{(PD,s)} + 2 \tilde{a}_{k_1,k_2}^{(PD,s)} + \tilde{a}_{k_1+1,k_2}^{(PD,s)})  & -n+1\le k_1 \le n-2,\\[1ex]
\frac{1}{4} (3 \tilde{a}_{k_1,k_2}^{(PD,s)} + \tilde{a}_{k_1+1,k_2}^{(PD,s)}) & k_1=-n ,\\[1ex]
\frac{1}{4} (\tilde{a}_{k_1-1,k_2}^{(PD,s)} + 3 \tilde{a}_{k_1,k_2}^{(PD,s)} ) & k_1=n-1. \end{array} \right.
$$
Set $\tilde{\mathbf A}^{(0)}:= \tilde{\mathbf A}^{(PD,N_s)}= (\tilde{a}_{k_1,k_2}^{(0)})_{k_1=-n,k_2=-m}^{n-1,m-1}$. 
\item Compute the local total variation $\mathrm{TV}^{(\mathrm{loc})}(k_1,k_2)$  of $\tilde{\mathbf A}^{(0)} = (\tilde{a}_{k_1,k_2}^{(0)})_{k_1=-n,k_2=-m}^{n-1,m-1}$ according to $(\ref{lTV})$ for $k_{1}=-n, \ldots , n-1$,  $k_{2}=-m, \ldots , m-1$. \\
Compute the median local total variation $\mathrm{MTV}(k_1,k_2)$ in $(\ref{MTV})$
for $k_1 = -n, \ldots , n-1$, $k_2=-m, \ldots , m-1$.
\item Compute the matrix of weights ${\mathbf W}:=(\w_{k_1,k_2})_{k_1=-n, k_2=-m}^{n-1,m-1}$ 
as given in $(\ref{ww})$.
\item For $j=0:N_{I}-1$ do 
\begin{enumerate}
\item Compute $\hat{\tilde{{\mathbf A}}}^{(j)} := {\mathbf F}_N {\tilde{{\mathbf A}}}^{(j)} {\mathbf F}_M$ and 
${\mathbf R}^{(j)} := {\mathbf F}_N^{-1} ({\mathbf P}^{(\Lambda)} \circ (\hat{\mathbf A}- \hat{\tilde{{\mathbf A}}}^{(j)})) {\mathbf F}_M^{-1} $.
\item Compute the update
$$ \tilde{\mathbf A}^{(j+1)}  := \tilde{\mathbf A}^{(j)} + \mu \, {\mathbf W} \circ {\mathbf R}^{(j)}. $$
\end{enumerate}
end(do)
\end{enumerate}

\textbf{Output:} Image reconstruction $\tilde{\mathbf A}^{(H)} = \tilde{\mathbf A}^{(N_{I})}$.
\label{alghyp}
\end{algorithm}
}

\noindent
Instead of fixing the number of iterations $N_I$ in Step 4 of Algorithm \ref{alghyp}, we can also apply a stopping criteria
based on the  norm of the remainder ${\mathbf R}^{(j)}$.
Next, we show that Algorithm \ref{alghyp} always converges to an image satisfying the constraint ${\mathbf P}^{(\Lambda)}
\circ \hat{\mathbf A} = \lim\limits_{N_I \to \infty}  ({\mathbf P}^{(\Lambda)} \circ \hat{\tilde{\mathbf A}}^{(N_{I})})$.

\begin{theorem}\label{theo2}
For given Fourier data ${\mathbf P}^{(\Lambda)} \circ \hat{\mathbf A}$ with ${\mathbf P}^{(\Lambda)}$ 
in $(\ref{PL})$,
Algorithm $\ref{alghyp}$ converges for $\mu \in [1,2)$ to an image 
$\tilde{\mathbf A}= \lim\limits_{N_I \to \infty}  \tilde{\mathbf A}^{(N_{I})}$
 satisfying ${\mathbf P}^{(\Lambda)} \circ \hat{\mathbf A} = {\mathbf P}^{(\Lambda)} \circ \hat{\tilde{\mathbf A}}$.
\end{theorem}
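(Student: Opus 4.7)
The plan is to recast Step 4 of Algorithm \ref{alghyp} as a linear iteration in Fourier space and show it is a strict contraction on
$$ V := \{u \in {\mathbb C}^{N \times M} : {\mathbf P}^{(\Lambda)} \circ u = u\}. $$
Writing $e^{(j)} := \hat{\mathbf A} - \hat{\tilde{\mathbf A}}^{(j)}$ and $e_1^{(j)} := {\mathbf P}^{(\Lambda)} \circ e^{(j)}$, and applying the 2D DFT to the update $\tilde{\mathbf A}^{(j+1)} = \tilde{\mathbf A}^{(j)} + \mu\,{\mathbf W} \circ {\mathbf R}^{(j)}$ (using ${\mathbf R}^{(j)} = {\mathbf F}_N^{-1} e_1^{(j)} {\mathbf F}_M^{-1}$), one obtains
$$ e_1^{(j+1)} = ({\mathbf I} - \mu T)\, e_1^{(j)}, \qquad T(u) := {\mathbf P}^{(\Lambda)} \circ {\mathbf F}_N\bigl({\mathbf W} \circ ({\mathbf F}_N^{-1} u {\mathbf F}_M^{-1})\bigr){\mathbf F}_M. $$
Note $T$ maps ${\mathbb C}^{N \times M}$ into $V$, so in particular $T|_V : V \to V$, and everything reduces to locating the spectrum of $T|_V$.

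Next I would show $T|_V$ is self-adjoint with spectrum contained in $[\epsilon, 1-\epsilon]$. For $u,v \in V$ the identity ${\mathbf P}^{(\Lambda)} \circ v = v$ combined with Parseval's theorem yields
$$ \langle Tu, v\rangle = \sum_{\kk \in \Lambda_{N,M}} \w_\kk\, \tilde{u}_\kk\, \overline{\tilde{v}_\kk}, \qquad \tilde{u} := {\mathbf F}_N^{-1} u {\mathbf F}_M^{-1}, $$
which is Hermitian in $(u,v)$ because the weights $\w_\kk$ are real, so $T$ is self-adjoint. The key ingredient is the uniform bound $\epsilon \le \w_\kk \le 1-\epsilon$ for all $\kk$: the first two branches of (\ref{ww}) give the endpoints, while in the "otherwise" branch the factor-$1.5$ threshold forces the ratio $\mathrm{MTV}(k_1,k_2)/\mathrm{MTV}(k_1 \pm n,k_2)$ into $[2/3, 3/2]$, hence $\w_\kk \in [2/5, 3/5]$. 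Combined with $\|\tilde{u}\|_2 = \|u\|_2$, this gives
$$ \epsilon\, \|u\|_2^2 \le \langle Tu, u\rangle \le (1-\epsilon)\,\|u\|_2^2 \qquad (u \in V), $$
so every eigenvalue of $T|_V$ lies in $[\epsilon, 1-\epsilon]$.

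Therefore $\|{\mathbf I} - \mu T\|_{V \to V} \le \max\{|1 - \mu\epsilon|, |1 - \mu(1-\epsilon)|\}$, which is strictly less than $1$ for any $\mu \in [1,2)$ since $\mu(1-\epsilon) < 2$. Hence $\|e_1^{(j)}\|_2 \to 0$ geometrically, and by unitarity $\|{\mathbf R}^{(j)}\|_2 = \|e_1^{(j)}\|_2$. The pointwise bound $\|\tilde{\mathbf A}^{(j+1)} - \tilde{\mathbf A}^{(j)}\|_2 \le \mu(1-\epsilon)\,\|{\mathbf R}^{(j)}\|_2$ then makes the increments summable, so $\{\tilde{\mathbf A}^{(j)}\}$ is Cauchy and converges to some $\tilde{\mathbf A}$; passing to the limit in $e_1^{(j)} \to 0$ yields the Fourier constraint ${\mathbf P}^{(\Lambda)} \circ \hat{\tilde{\mathbf A}} = {\mathbf P}^{(\Lambda)} \circ \hat{\mathbf A}$. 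The main obstacle I foresee is exactly the uniform two-sided bound on ${\mathbf W}$: once the case analysis of (\ref{ww}) is settled (with a natural convention, e.g.\ $\w_\kk = 1/2$, for the degenerate case where both MTV values vanish), the remainder of the argument is a routine spectral / Banach-fixed-point calculation.
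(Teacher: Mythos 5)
Your proof is correct and follows essentially the same route as the paper: both derive the residual recursion $\hat{\mathbf R}^{(j+1)} = ({\mathbf I}-\mu T)\,\hat{\mathbf R}^{(j)}$ and obtain the contraction factor $1-\epsilon$ from the uniform bound $\w_{\kk}\in[\epsilon,1-\epsilon]$, the paper via spectral norms of the vectorized matrix factors and you via the spectrum of the self-adjoint operator $T|_V$. Your writeup is in fact slightly more complete, since you make explicit the summability of the increments (hence convergence of the iterates $\tilde{\mathbf A}^{(j)}$ themselves, not only of the residuals) and you flag the degenerate case where both MTV values vanish in the ``otherwise'' branch of (\ref{ww}), which the paper passes over.
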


\begin{proof}
It is sufficient to show that $\lim\limits_{j  \to \infty} {\mathbf R}^{(j)} = {\mathbf 0}$ for ${\mathbf R}^{(j)} = {\mathbf F}_N^{-1} ({\mathbf P}^{(\Lambda)} \circ (\hat{\mathbf A}- \hat{\tilde{{\mathbf A}}}^{(j)})) {\mathbf F}_M^{-1}$. 
Step 4 of Algorithm \ref{alghyp} yields  with $\hat{\mathbf R}^{(0)} = {\mathbf F}_N {\mathbf R}^{(0)} {\mathbf F}_M
= {\mathbf P}^{(\Lambda)} \circ (\hat{\mathbf A} - \hat{\tilde{\mathbf A}}^{(0)}) $  the recursion formula
\begin{align*}
\hat{\mathbf R}^{(j+1)} &= {\mathbf P}^{(\Lambda)} \circ (\hat{\mathbf A} - \hat{\tilde{\mathbf A}}^{(j+1)}) 
= {\mathbf P}^{(\Lambda)} \circ (\hat{\mathbf A} - ( \hat{\tilde{\mathbf A}}^{(j)} + \mu \, {\mathbf F}_N ( {\mathbf W}\circ {\mathbf R}^{(j)}) {\mathbf F}_M)) \\
& =  \hat{\mathbf R}^{(j)} - \mu  {\mathbf P}^{(\Lambda)} \circ ({\mathbf F}_N ( {\mathbf W}\circ {\mathbf R}^{(j)}) {\mathbf F}_M) =  {\mathbf P}^{(\Lambda)}  \circ (\hat{\mathbf R}^{(j)} - \mu ({\mathbf F}_N ( {\mathbf W}\circ {\mathbf R}^{(j)}) {\mathbf F}_M)) \\
&= {\mathbf P}^{(\Lambda)}  \circ \big({\mathbf F}_N ( {\mathbf R}^{(j)} -  \mu ( {\mathbf W}\circ {\mathbf R}^{(j)})){\mathbf F}_M \big),
\end{align*}
since $\hat{\mathbf R}^{(j)}$ has  by definition vanishing components for all 
$\nuu =(\nu_1,\nu_2) \in (\Lambda_{N} \setminus \Lambda) \times \Lambda_M$.
Taking the inverse DFT and applying vectorization 
leads to
\begin{align}\label{rj} \vec ({\mathbf R}^{(j+1)}) \!= \! ({\mathbf F}_M \otimes {\mathbf F}_N)^{-1} \diag(\vec({\mathbf P}^{(\Lambda)}))  ({\mathbf F}_M \otimes {\mathbf F}_N) \big({\mathbf I}_{MN} - \mu   \diag (\vec({\mathbf W})) \big) \vec ({\mathbf R}^{(j)}),
\end{align}
where we have used that $\vec({\mathbf F}_{N} {\mathbf R}^{(j)} {\mathbf F}_{M}) = ({\mathbf F}_{M} \otimes {\mathbf F}_{N}) \vec({\mathbf R}^{(j)})$, see e.g.\ \cite{PPST23}, Section 3.4.
We observe that  all matrix factors in (\ref{rj}) have a spectral norm smaller than or equal to one. In particular, $({\mathbf F}_M \otimes {\mathbf F}_N)$ is orthonormal, $\diag( \vec({\mathbf P}^{(\Lambda)}) )$ only contains zeros or ones in the diagonal, and $\big({\mathbf I}_{MN} - \mu \,  \diag (\vec({\mathbf W})) \big)$ contains diagonal entries $1- \mu \w_{k_1,k_2} \in (-1+2\epsilon,1-\epsilon]$, since  $\mu \in [1, 2)$, $\w_{k_1,k_2} \in [\epsilon, 1-\epsilon]$. 
Thus, the spectral radius of  $\big({\mathbf I}_{MN} - \mu \,  \diag (\vec({\mathbf W})) \big)$ is at most $1-\epsilon$, and we can directly conclude that 
$$  \|\vec ({\mathbf R}^{(j+1)})\|_2 \le (1-\epsilon)  \|\vec ({\mathbf R}^{(j)})\|_2, $$
such that convergence is ensured for $j \to \infty$.
 \end{proof}
\medskip

\section{Numerical Results}
\label{sec:num}

In this section we compare the described algorithms for reconstruction from structured incomplete Fourier data with emphasis to the sampling pattern given in (\ref{set}). 
We particularly consider different images of size $512 \times 512$ and compare the PSNR (peak signal to noise ratio) for the   reconstruction methods zero refilling in Section \ref{ssec:zero}, local interpolation using GRAPPA in Section \ref{ssec:inter}, TV functional minimization in Section \ref{sec:TV}, the hybrid method  in Section \ref{sec:hybrid}, and 
the low-pass reconstruction, obtained if for a reduction rate $r$, we simply take all middle rows with indices $-\lfloor \frac{N}{2r} \rfloor, \ldots , \lfloor \frac{N}{2r} \rfloor$. The reconstruction method SPIRiT in Section \ref{ssec:inter} requires a higher computational effort and does not give better results than GRAPPA in our setting.
All used images are ``Standard''  test images and have been taken from the open source platform \texttt{https://www.imageprocessingplace.com/root\_ \break files\_V3/image\_databases.htm}.
The MATLAB software to reproduce the results in this section can be found at 
\texttt{https://na.math.uni-goettingen.de} under \texttt{software}.

We will study reduction rates $r=2,4,6,8$, taking in the corresponding experiments $256$, $128$, $85$ or $64$
rows according to the scheme fixed in (\ref{set}).
Recall that for  the considered real images this corresponds to almost double reduction rates  up to $16$, since we have $\hat{a}_{\nuu} = \overline{\hat{a}}_{-\nuu}$.
For the width of the low-pass area we consider different values  $L < 512/r$.
For all image reconstructions, the red colored PSNR values in Tables \ref{tab1}--\ref{tabneu} indicate the best reconstruction result for the considered reduction rate.

\begin{table}[htbp]
\scriptsize
\caption{Comparison of the reconstruction performance for incomplete Fourier data for the $512 \times 512$ 
''cameraman'' image (PSNR values)}
\begin{center}
\begin{tabular}{llccccc}
\hline
reduction rate& low-pass width  &  zero refilling &  only low pass  & {GRAPPA} & TV-minimization  & hybrid\\
\hline
$r=2$ & $L=43$  &  28.4611  & {25.2711} & 28.6108 & 34.2899  & {\bf 35.6529} \\
$r=2$ &  $L=63$  &  30.4380  & {27.2604} & 30.4282 & 35.8917  & {{\bf 37.5627}} \\
$r=2$ &  $L=83$  &  31.9667 & {28.8285} & 31.9652 & {37.8554}  & {{\bf 39.0476}} \\ 
$r=2$ &  $L=103$  &  33.6619  & {30.5811} & 33.6409 & {39.4806}  & {{\bf 40.5393}}\\ 
$r=2$ &  $L=163$  &  38.4152  & {35.7034} & 38.4017 & {42.1741}  & {{\bf 44.5973}} \\ 
$r=2$ &  $L=183$  &  39.7616  & {37.3065} & 39.7681 & {42.1954}  & {\bf 45.1643}\\
$r=2$ & $L=255$ & {42.2019} & {42.1127} & {42.1749}& {41.1258} 
& {{\bf 44.3725}} 
\\
\hline
$r=4$ & $L=43$  &  28.2920  & {25.2711}& 28.4313 & 33.4018 &  {\bf 34.8641}\\
$r=4$ &  $L=63$  &  30.0766  & {27.2604}& 30.0655 & {34.5426} &  {{\bf 35.8546}}\\ 
$r=4$ &  $L=83$  &  31.2739  & {28.8285}& 31.2704 & {34.6659} &  {\color{red}{\bf 36.0348}}\\ 
$r=4$ &  $L=103$  &  32.2709  & {30.5811}& 32.2589 & {34.2138} &  {{\bf 35.5611}}\\ 
$r=4$ &  $L=127$ &  {32.6784} & {32.5435} & {32.6479} 
& {33.2822} & {{\bf 34.3394}} \\ 
\hline
$r=6$ &  $L=35$  & 26.9866 & {24.2923} & {27.0308} &  {30.9692} & {{\bf 31.8935}} \\ 
$r=6$ &  $L=43$  &  27.6986 & {25.2711} & 27.8012 & 31.1364 & {\color{red}{\bf 32.1167}} \\
$r=6$ &  $L=63$  &  28.8521  & {27.2604} & 28.8466& {30.8352} & {{\bf 31.6424}} \\ 
$r=6$ &  $L=83$  & {29.1611} & {28.8285} & {29.1612} &  {30.2491} & {{\bf 30.8172}} \\ 
\hline
$r=8$ &  $L=31$  & {25.9750} & {23.7100} & {25.9541} &  {29.2602} & {\color{red}{\bf 29.8517}} \\ 
$r=8$ & $ L=35$  &  26.3523 &  {24.2923} & 26.3809 &  29.2534 & {{\bf 29.8214}}\\
$r=8$ &  $L=43$  &  26.8194  & {25.2711} &  26.8797 &  29.0807 & {\bf 29.6174} \\
$r=8$ &  $L=55$  & {27.2679} & {26.4991} & {27.2523} &  {28.5636} & {{\bf 29.0588}} \\ 
\hline
\end{tabular}
\end{center}
\label{tab1}
\end{table}

\begin{figure}[htbp]
\begin{center}
\hspace*{-5mm}	
        \includegraphics[width=0.29\textwidth,height=0.29\textwidth]{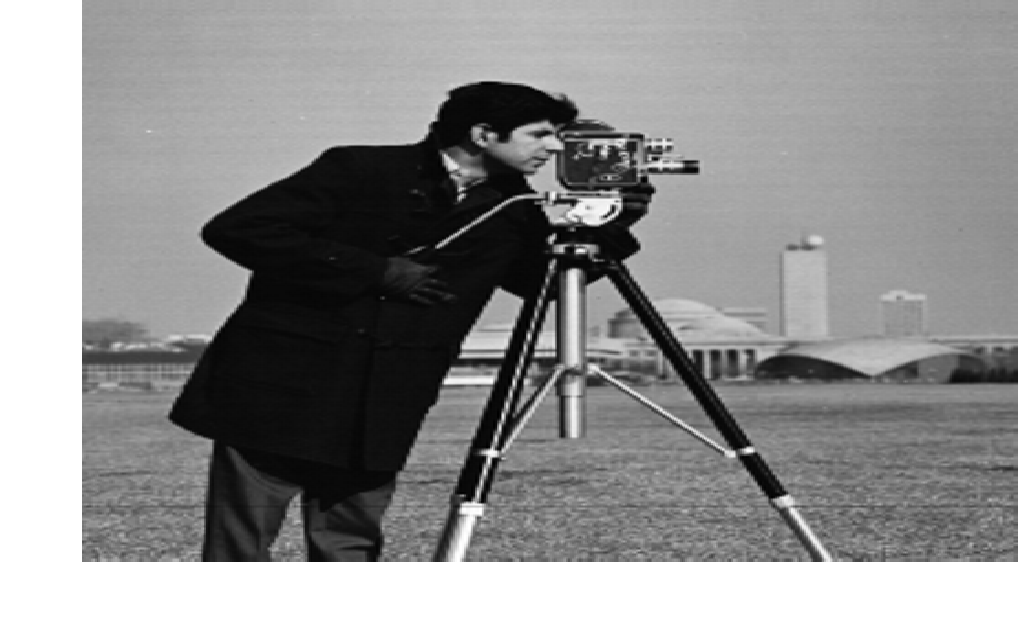}  %
	\includegraphics[width=0.29\textwidth,height=0.29\textwidth]{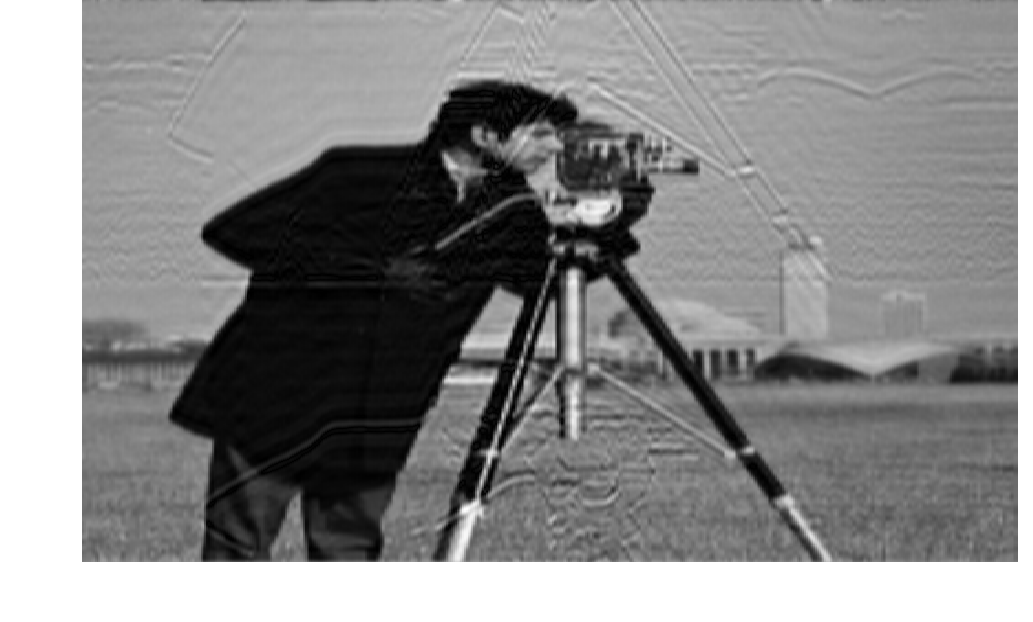}  
	\includegraphics[width=0.29\textwidth,height=0.29\textwidth]{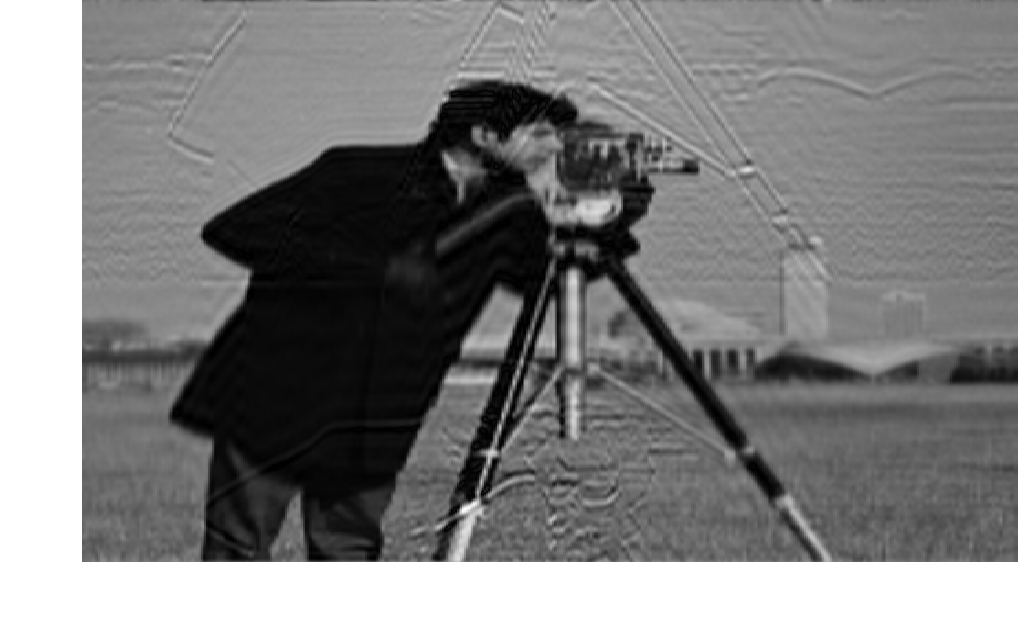} \\[1ex]
\hspace*{-5mm}	
        \includegraphics[width=0.29\textwidth,height=0.29\textwidth]{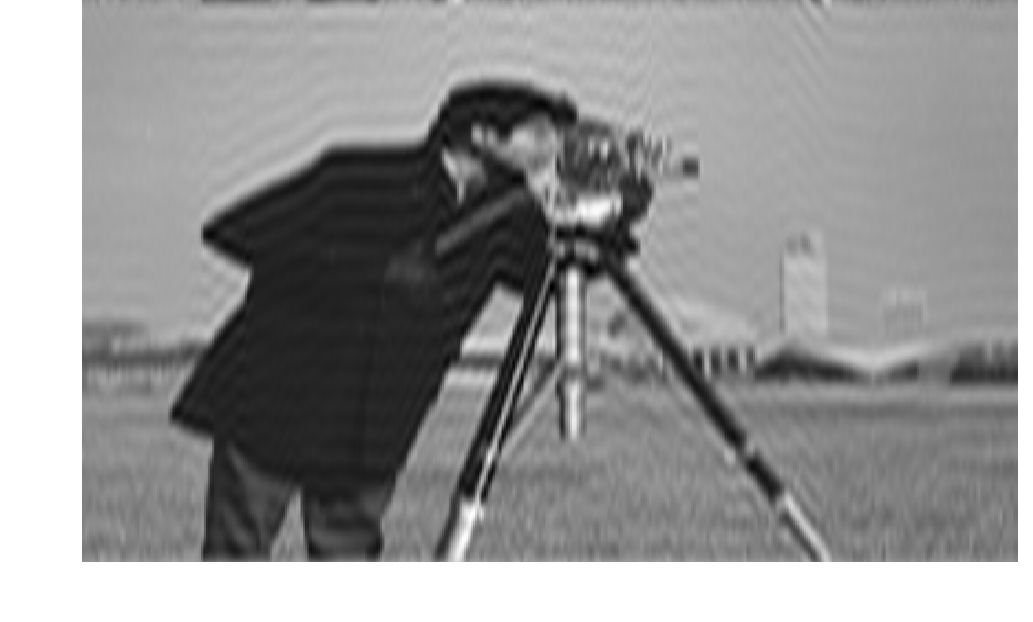} 
	\includegraphics[width=0.29\textwidth,height=0.29\textwidth]{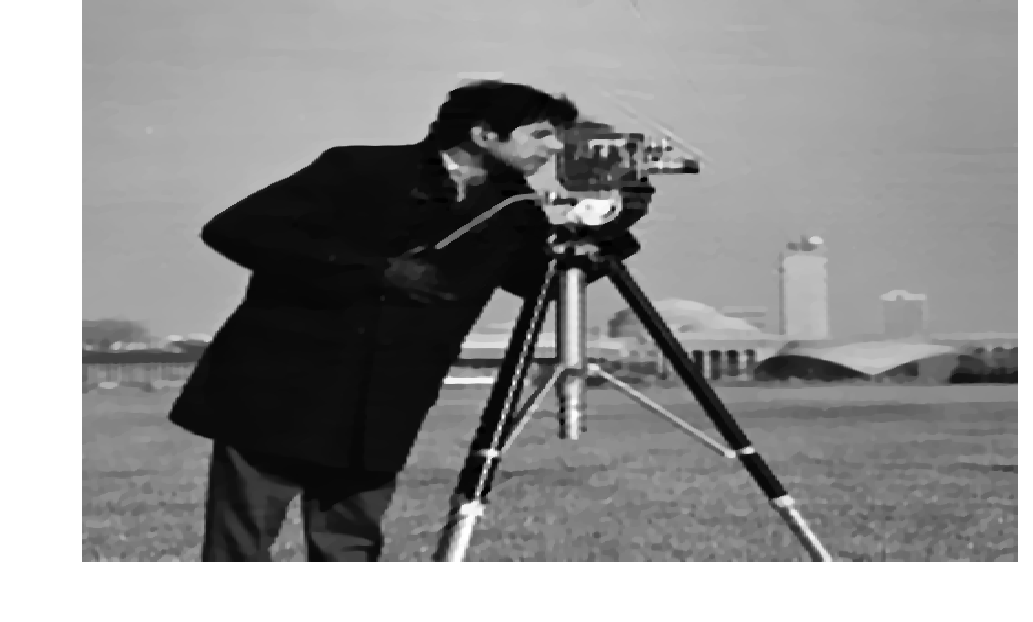}  %
	\includegraphics[width=0.29\textwidth,height=0.29\textwidth]{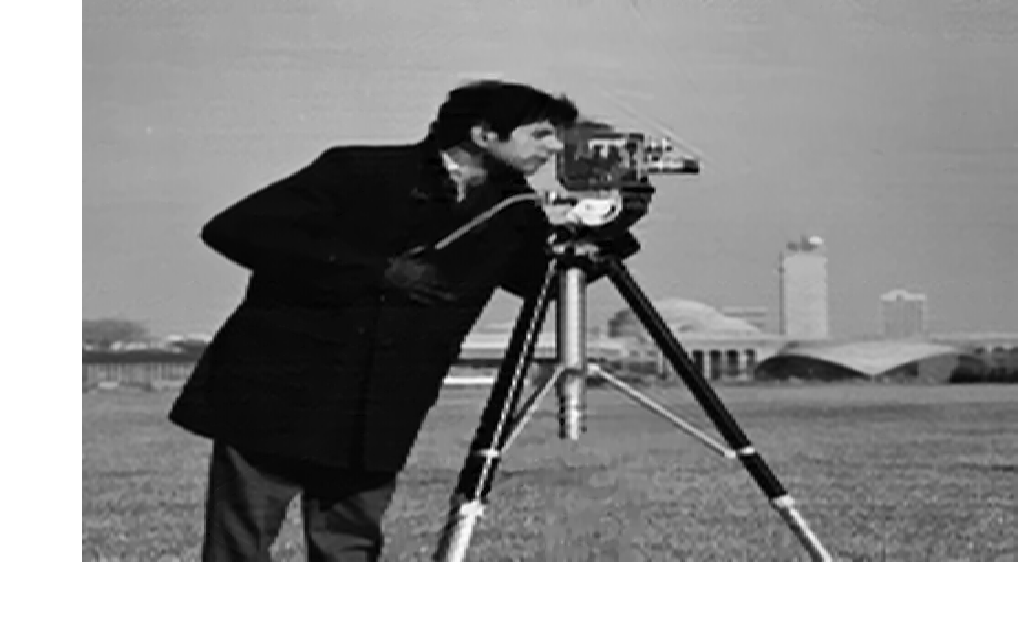}
	\end{center}
\caption{Original image and image reconstructions for  $r=6$ and $L=43$.
Top: Left: ''cameraman'' original image $512 \times 512$;
 Middle:  zero refilling with PSNR $27.6986$; 
 Right: GRAPPA method with PSNR $27.7064$;
Bottom: Left: low-pass reconstruction from  $L=43$ rows with PSNR 25.2711; 
Middle: TV minimization with PSNR 31.1364; Right: hybrid method 
 with PSNR 32.1167 (see Table  \ref{tab1})}.
\label{figure2}
\end{figure}

The reconstruction results for the $512 \times 512$ image ''cameraman'' are given in Table \ref{tab1}, see also Figure \ref{figure2} for the special case of reduction rate $r=6$, where $85$ rows of the 512 rows of $\hat{\mathbf A}$ are acquired. The interpolation  algorithm  GRAPPA uses  a local window of size $11 \times 11$ for the interpolation weights.
For Algorithm \ref{algTV} (TV-minimization) we  used the parameters $N_I=250$, 
$\tau=0.03$, $\theta=1.0$, $\sigma= 0.01+1/(8\tau)$ as well as $\lambda=100$ for $L=31, 35, 43, 55$, $\lambda=200$ for $L=63$, $\lambda=500$ for $L=83, 103$, and $\lambda=1000$ for $L=163, 183, 255$.
For the Hybrid Algorithm \ref{alghyp} we have taken the result of Algorithm \ref{algTV} as initial image, $N_I=10$, $N_s=3$,
$\mu = 1.6$, $\epsilon=0.05$, and window size $\gamma_1=\gamma_3=3$.

\begin{table}[htbp]
\scriptsize
\caption{Comparison of the reconstruction performance for incomplete Fourier data for the $512 \times 512$ ''boat'' 
image (PSNR values)}
\begin{center}
\begin{tabular}{llccccc}
\hline
reduction rate& low-pass width  &  zero refilling & only low pass & GRAPPA & TV-minimization  & hybrid\\
\hline
$r=2$ & $L=43$  &  27.5472  &  {{24.3435}} & 27.5607  & 30.4695  &  {\bf 30.9669} \\ 
$r=2$ &  $L=63$  &  29.2590 &{{26.1116}} & 29.2600 & 31.7578  & {\bf 32.4685} \\ 
$r=2$ &  $L=83$  &  30.6754 & {{27.6409}} & 30.6383 & 32.8004  & {\bf 33.7154} \\ 
$r=2$ &  $L=103$  &  32.1627  &{{29.1438}} & 32.1603 & 34.2960  & {\bf 34.9491}\\ 
$r=2$ &  $L=163$  &  35.5347 & {{32.9782}} & 35.5281 & 36.6132  & {\bf 37.6685} \\ 
$r=2$ &  $L=183$  &  36.4173  & {{34.1915}} & 36.4159 & 36.9317  & {\bf 38.2374} \\ 
$r=2$ &  $L=223$  &  37.8877  & {{36.5739}} & 37.8881 & 37.0482  & {\color{red}{\bf 38.8041}} \\ 
$r=2$ &  $L=255$  &   {38.2792}  & {{38.2014}} & {38.2529} & {36.9537}  & {{\bf 38.7321}} \\ 
\hline
$r=4$ & $L=43$  &  27.2689  & {24.3435}& 27.2808& 29.7853 &{\bf 30.4302}\\ 
$r=4$ &  $L=63$  &  28.7329  & {26.1116} & 28.7340 & 30.5864 &  {\bf 31.4227}\\ %
$r=4$ &  $L=83$  &  29.7577 & {27.6409}& 29.7333 & 30.8849 & {\bf 31.8436}\\ %
$r=4$ &  $L=103$  &  30.5436  & {29.1438}& 30.5414 &  {31.1424} &  {{\bf 31.9322}}\\ 
$r=4$ &  $L=127$ &  {30.7728}  & {30.6800}& {30.7369} &  {30.8902} &  {{\bf 31.6302}}\\ 
\hline
$r=6$ &  $L=43$  &  26.6263 & {24.3435}  & 26.6326 & 28.5838 & {\bf 29.1021} \\ %
$r=6$ &  $L=63$  &  27.5990  & {26.1116} & 27.6004 & 28.6371 & {\color{red}{\bf 29.1912}} \\ %
$r=6$ &  $L=83$  &  {27.8879}  & {27.6409} & {27.8875} & {28.3311} & {{\bf 28.8302}} \\ %
\hline
$r=8$ & $ L=35$  &  25.3020 &  {23.4108} & 25.2796 &  27.0459& {\bf 27.4014}\\  %
$r=8$ &  $L=43$  &  25.8638  & {24.3435} &  {25.8638} &  27.2016 & {\color{red}{\bf 27.5753}} \\ %
$r=8$ & $L=63$  &  {26.2281}  & {26.1116} & {26.1876} & {26.6439} & {{\bf 26.9781}} \\ %
\hline
\end{tabular}
\end{center}
\label{tab2}
\end{table}

The reconstruction results for the $512 \times 512$ image ''boat'' are given in Table \ref{tab2}. 
In  Figure \ref{figure3}  we present the obtained reconstructions for the special case of reduction rate $r=4$, where $128$ rows of the 512 rows of $\hat{\mathbf A}$ are acquired, with a low-pass area containing $L=103$ centered rows.
For Algorithm \ref{algTV} (TV-minimization) we  applied the parameters $N_I=250$, 
$\tau=0.03$, $\theta=1.0$, $\sigma= 0.01+1/(8\tau)$ and $\lambda=100$ for $r=4,6,8$.
We used $\lambda=100$ for $L\le83$,
$\lambda=200$ for $L=103, 127$, and $\lambda=500$ for $163 \le L \le 223$, and $L=1000$ for $L>223$.
For the Hybrid Algorithm \ref{alghyp} we took the result of Algorithm \ref{algTV} as  initial image, $N_I \le 10$, 
$\mu = 1.6$, $\epsilon=0.1$, window size $\gamma_1=\gamma_2=3$. Further, we applied two smoothing steps ($N_s=2$) for $r=4,6,8$ and $r=2$ with $L\le 83$ and only one smoothing step for $r=2$ and $L \ge 103$.
The GRAPPA algorithm uses a local window of size $11 \times 11$ to compute the interpolation weights form the low-pass area.

\begin{figure}[htbp]
\begin{center}
\hspace*{-5mm}	\includegraphics[width=0.29\textwidth,height=0.29\textwidth]{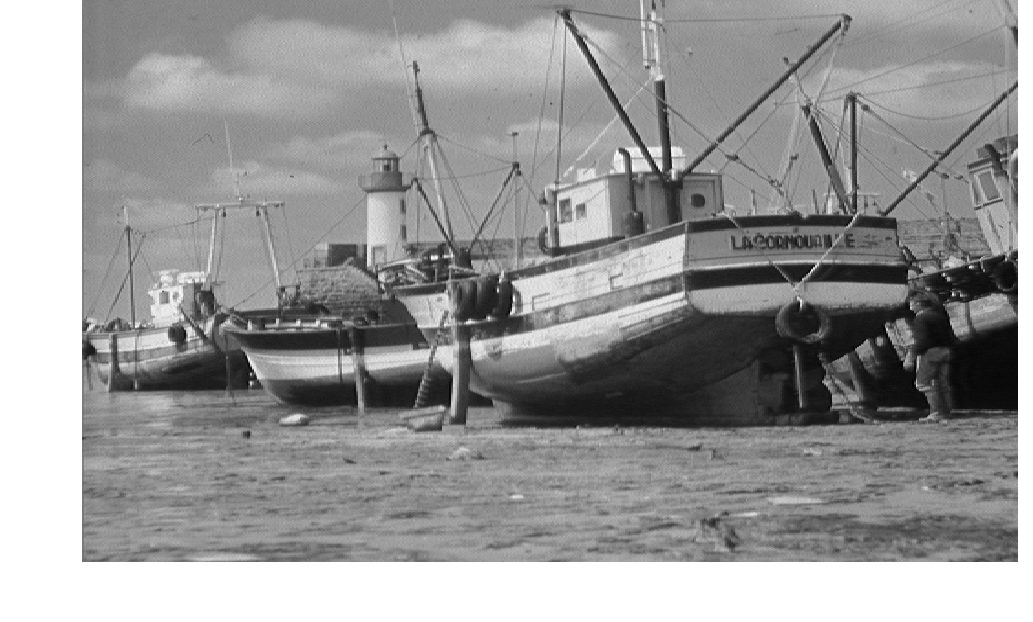}   
	\includegraphics[width=0.29\textwidth,height=0.29\textwidth]{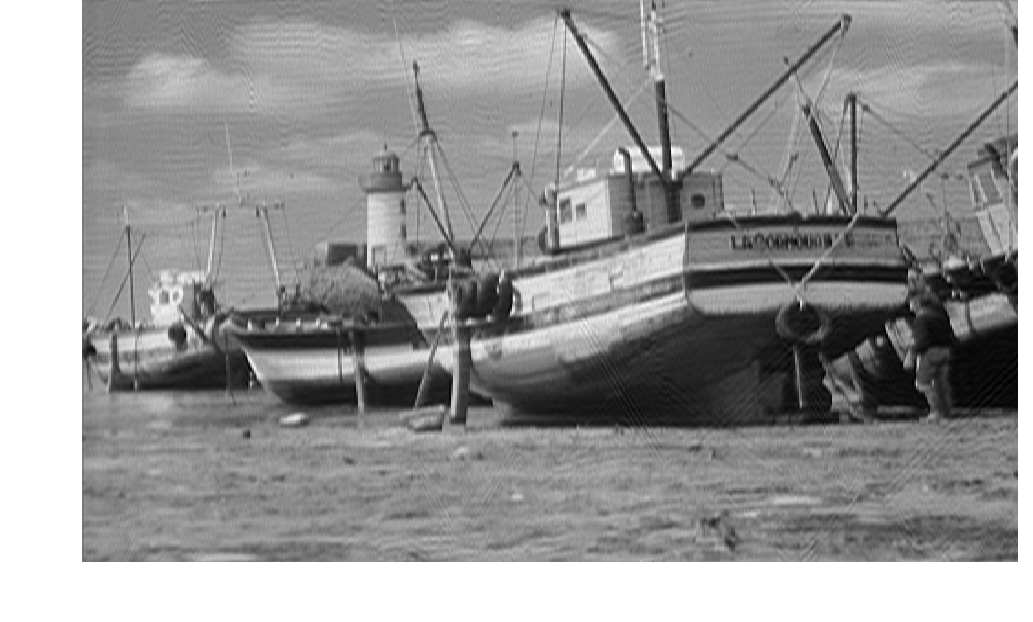} 
	\includegraphics[width=0.29\textwidth,height=0.29\textwidth]{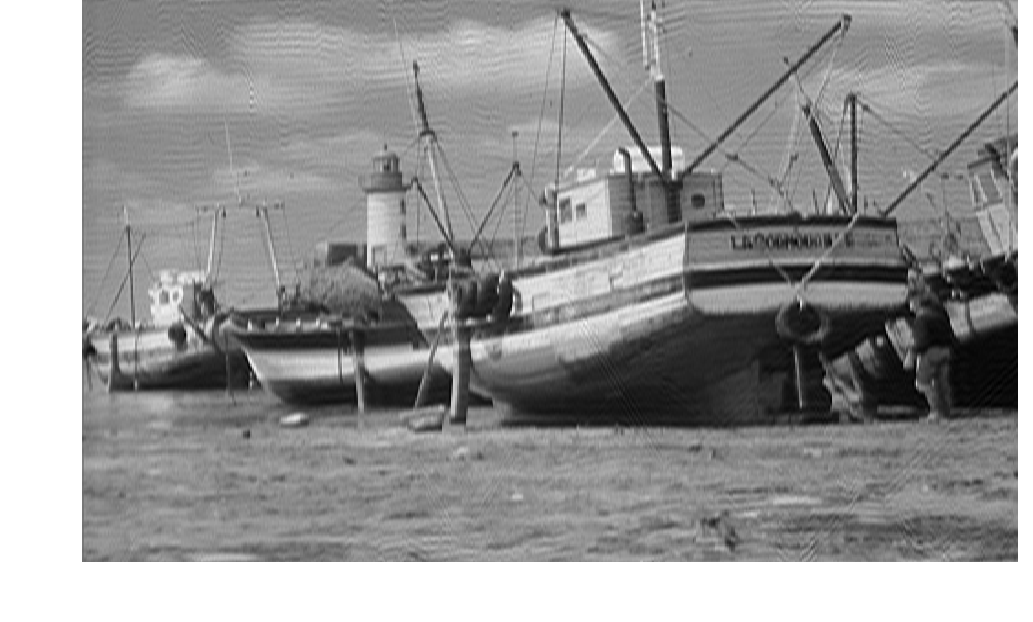} \\[1ex]
\hspace*{-5mm}	\includegraphics[width=0.29\textwidth,height=0.29\textwidth]{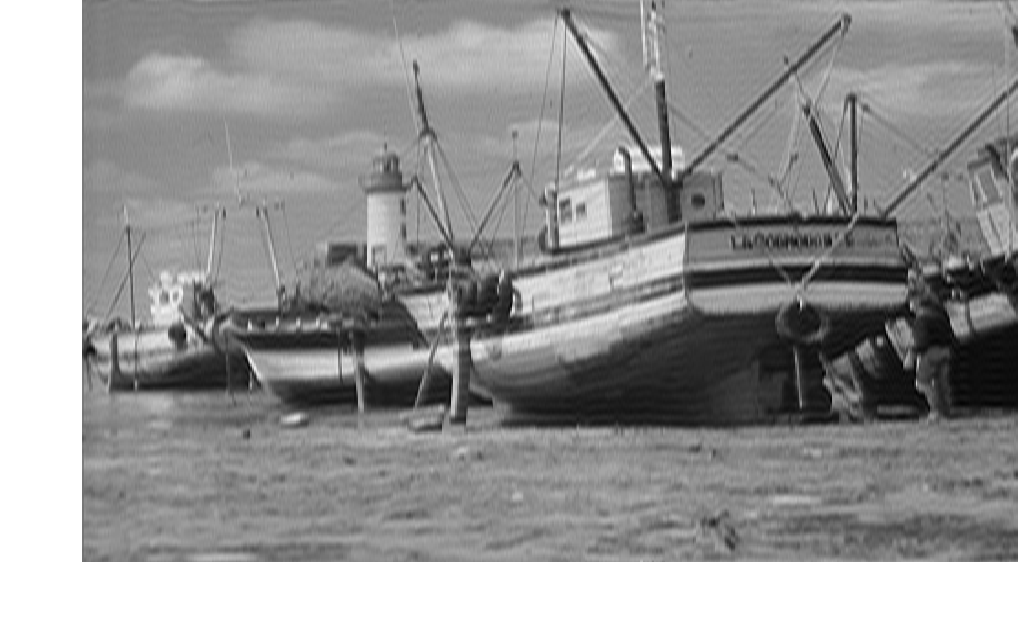}   
	\includegraphics[width=0.29\textwidth,height=0.29\textwidth]{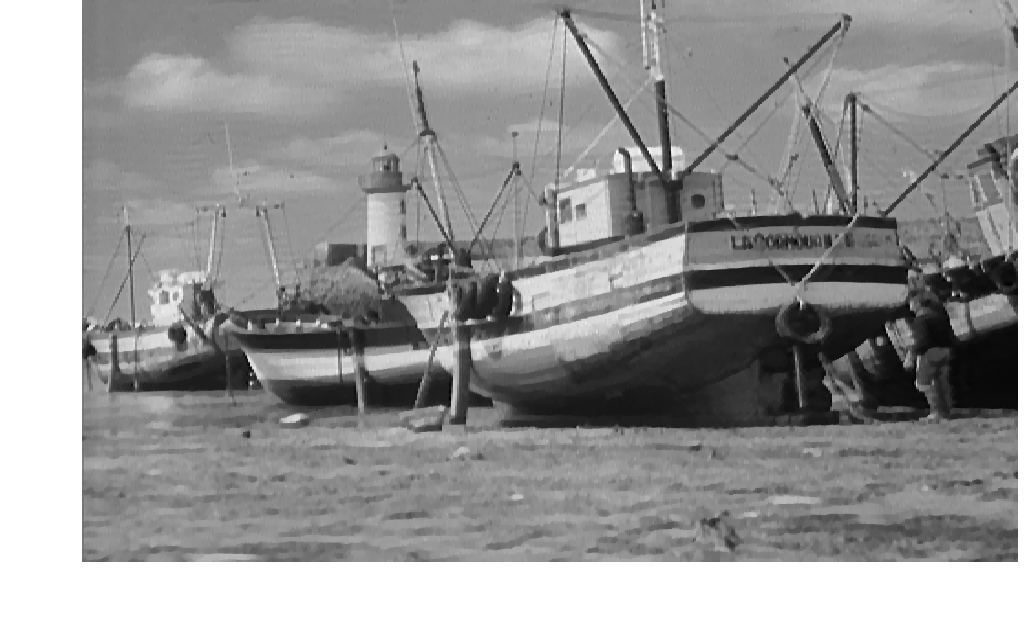}  
	\includegraphics[width=0.29\textwidth,height=0.29\textwidth]{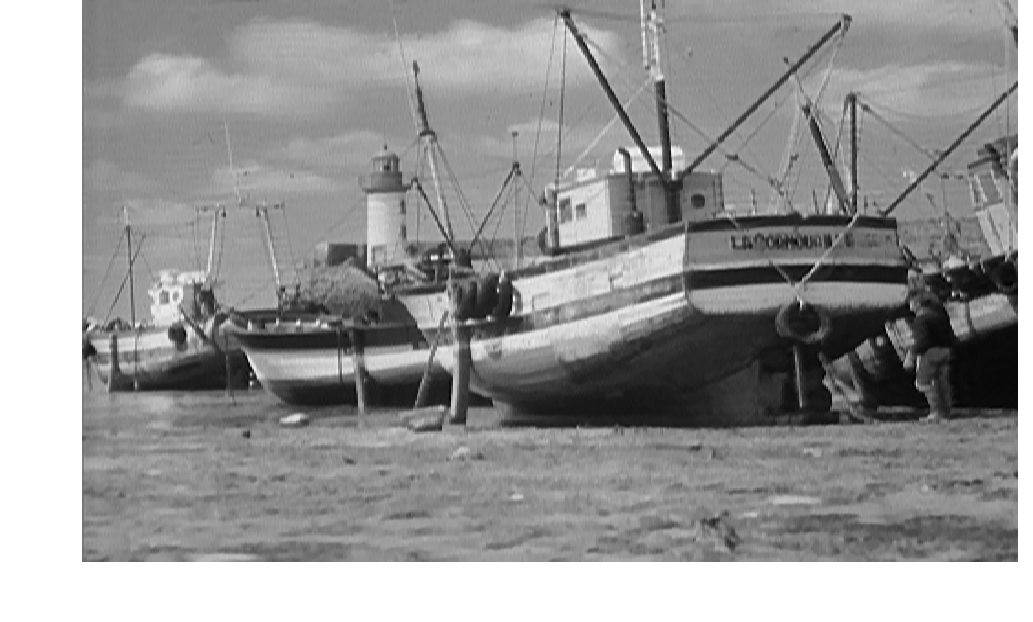}
	\end{center} 
\caption{Original image and image reconstructions for reduction rate $r=4$ and $L=103$.
Top: Left: ''boat'' original image $512 \times 512$;
 Middle:  zero refilling with $L=103$ and PSNR $30.5436$; 
 Right:  GRAPPA method with PSNR $30.5419$;
Bottom: Left: low-pass reconstruction from only $L=103$ rows with  PSNR $29.1438$; Middle:  TV minimization with  PSNR 31.1424; Right: hybrid method with PSNR 31.9459, see Table \ref{tab2}.
}
\label{figure3}
\end{figure}

\begin{table}[htbp]
\scriptsize
\caption{Comparison of the reconstruction performance for incomplete Fourier data for the $512 \times 512$ ``phantom''
image (PSNR values)}
\begin{center}
\begin{tabular}{llccccc}
\hline
reduction rate& low-pass width  &  zero refilling & only low pass & {GRAPPA} & TV-minimization  & hybrid\\
\hline
$r=2$ & $L=43$  & 28.0710 & {24.4681}  &  30.5464 &  41.3545 & {\bf 41.7740}\\ 
$r=2$ &  $L=63$  &  29.8508 & {26.4199}  & 32.6006 & 42.6828 & {\bf 43.2157} \\  
$r=2$ &  $L=83$  & 31.3967 & {28.0753}   &  34.0137& 43.1131& {\color{red} {\bf 43.6607}} \\ 
$r=2$ &  $L=103$  & 32.9290 &  {29.7686} & 35.2493 & 42.9847 & {\bf 43.5047} \\ 
$r=2$ &  $L=163$  &  35.8249 & {33.3771} & 37.1173 & 41.9584  & {\bf 42.3462}\\ 
$r=2$ &  $L=255$ &  {36.5454}  &  {36.5028}  & {36.5365} &   {39.5498}  & {{\bf 39.7747}} \\ 
\hline
$r=4$ & $L=35$  & 26.5838 & {23.4970} & 28.8051 & 37.1276 & {\bf 37.2101}\\ %
$r=4$ & $L=43$  & 27.6676& {24.4681} & 29.6379 & 37.2433 & {\bf 37.3264} \\ 
$r=4$ &  $L=63$  & 29.1792 & {26.4199} & 31.0389 & 37.3916 & {\color{red} {\bf 37.4674}}\\ 
$r=4$ &  $L=83$  & 30.3271  & {28.0753}  & 31.7063 & 37.1588 & {\bf 37.2312}\\ 
$r=4$ & $L=127$ &  {31.5192}  &  {31.4228}  & {31.5420} &   {36.1592}  & {{\bf 36.2218}} \\ 
\hline
$r=6$ & $L=19$  &  24.3204  &  21.2634  & 25.7926 &   34.7709  & {\bf 34.8233} \\ 
$r=6$ &  $L=27$ & 25.5429 & 22.5247 & 27.4372& 35.2243 & {\color{red} {\bf 35.2759}}\\
$r=6$ &  $L=43$  & 27.1459 & 24.4681  & 28.5582 &  34.9036  & {\bf 34.9483}         \\ 
$r=6$ &  $L=63$  &  28.0999  & 26.4199 & 28.9733 & 33.2123 & {\bf 33.2621} \\  
$r=6$ & $L=83$  &  {28.4304}  &  {28.0753}  & {28.5262} &   {31.5902}  & {{\bf 31.6129}} \\ 
\hline
$r=8$ & $ L=19$  &   24.0342 &  {21.2634} & 25.2564 &   32.2327 &  {\color{red} {\bf 32.2591}} \\ 
$r=8$ & $ L=27$  & 25.1295 &  {22.5247} &  26.5647 &    31.9117 & {\bf 31.9433} \\ 
$r=8$ & $ L=35$  &  25.6499  & {23.4970}   & 26.8477 & 31.0930 & {\bf 31.2566} \\  
$r=8$ &  $L=43$  &  26.2553  &  {24.4681}  & 26.9402 &   30.2271  & {\bf 30.4698} \\  
$r=8$ &  $L=63$  &  26.6015  &  26.4199  & {26.5920} &   {28.6672}  & {{\bf 28.8872}} \\  
\hline
\end{tabular}
\end{center}
\label{tab3}
\end{table}

Next, we consider the MRI ``phantom'' image of size $512 \times 512$, see Table \ref{tab3} and Figure \ref{figure4}.
 Because of the special cartoon-like structure of this image, the Hybrid Algorithm \ref{alghyp} does not add much improvement to the results achieved by the TV minimization. In contrast to the other images, for higher reduction rates we obtain better recovery results  when taking a  low-pass area with smaller width $L$, since it is more important here to catch the higher frequencies.
For Algorithm \ref{algTV} (TV-minimization) we  applied the parameters $N_I=250$, 
$\tau=0.03$, $\theta=1.0$, $\sigma= 0.01+1/(8\tau)$ and $\lambda=500$.
For the Hybrid Algorithm \ref{alghyp} we took the result of Algorithm \ref{algTV} as the initial image, applied one smoothing step $N_{s}=1$ for $r=8$, $L=35,\, 43$, and $N_{s}=0$ otherwise, $N_I\le 15$, 
$\mu = 1.6$, $\epsilon=0.1$, window size $\gamma_1=\gamma_3=3$.  The interpolation algorithm (GRAPPA) uses a local window of size $11 \times 11$ to compute the interpolation weights form the low-pass area.
For GRAPPA we employed the implementation by M.~Lustig in the ESPIRiT toolbox for the special case of only one coil, which for this image provided better results than our implementation (in contrast to the other two images). Corresponding software can be found under \texttt{https://people.eecs.berkeley.edu/\textasciitilde mlustig/Software.html}, the applied GRAPPA algorithm is also contained in our software package.
For phantom image example, the best reconstruction results for different reduction rates are presented in Figure \ref{figure5}.

\begin{figure}[htbp]
\begin{center}
\hspace*{-5mm}	
        \includegraphics[width=0.29\textwidth,height=0.29\textwidth]{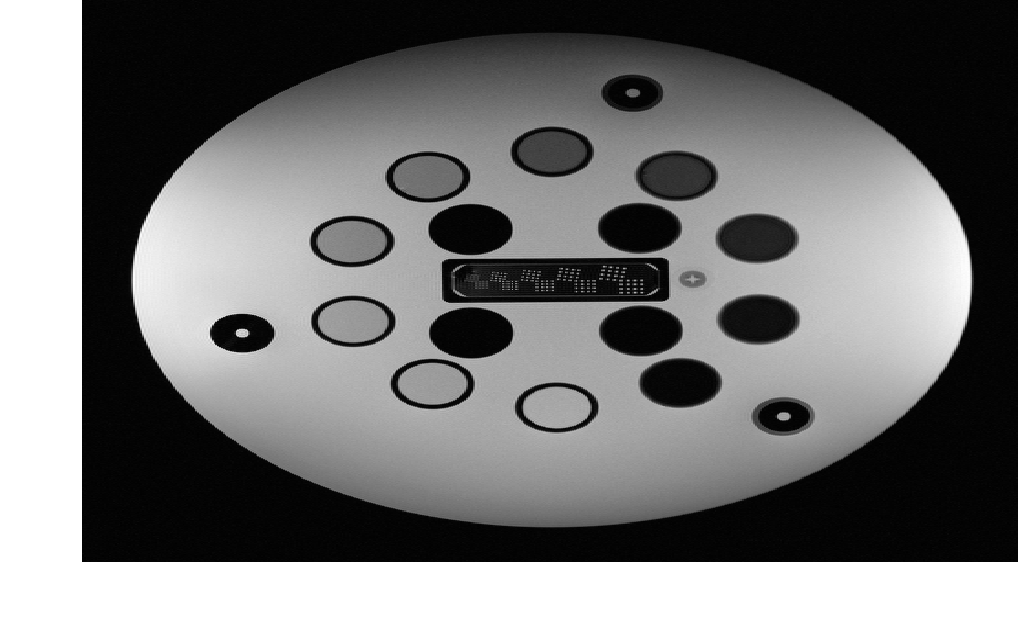}  
	\includegraphics[width=0.29\textwidth,height=0.29\textwidth]{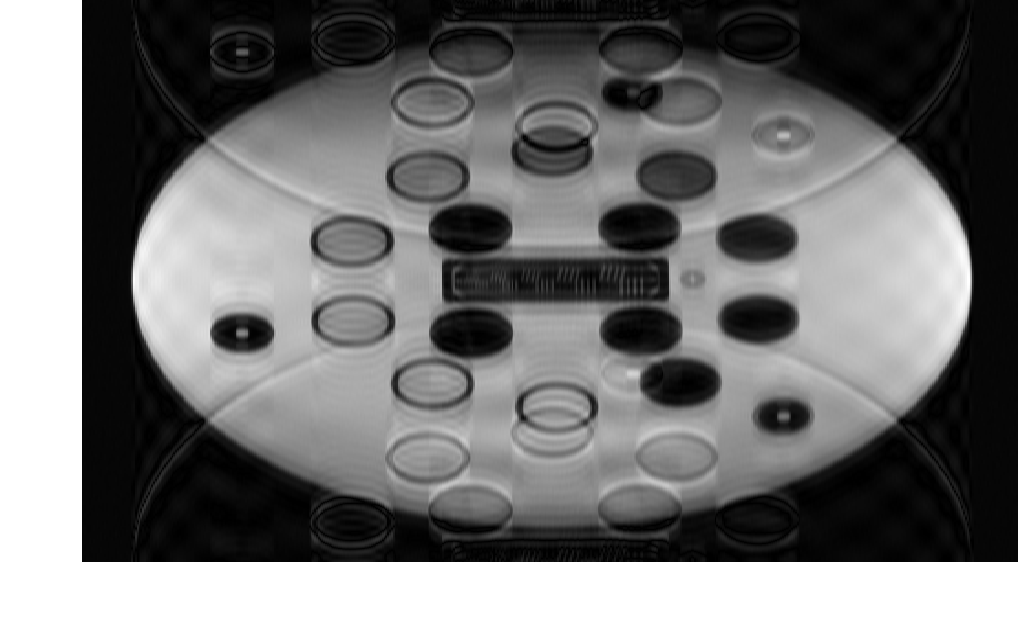} 
	\includegraphics[width=0.29\textwidth,height=0.29\textwidth]{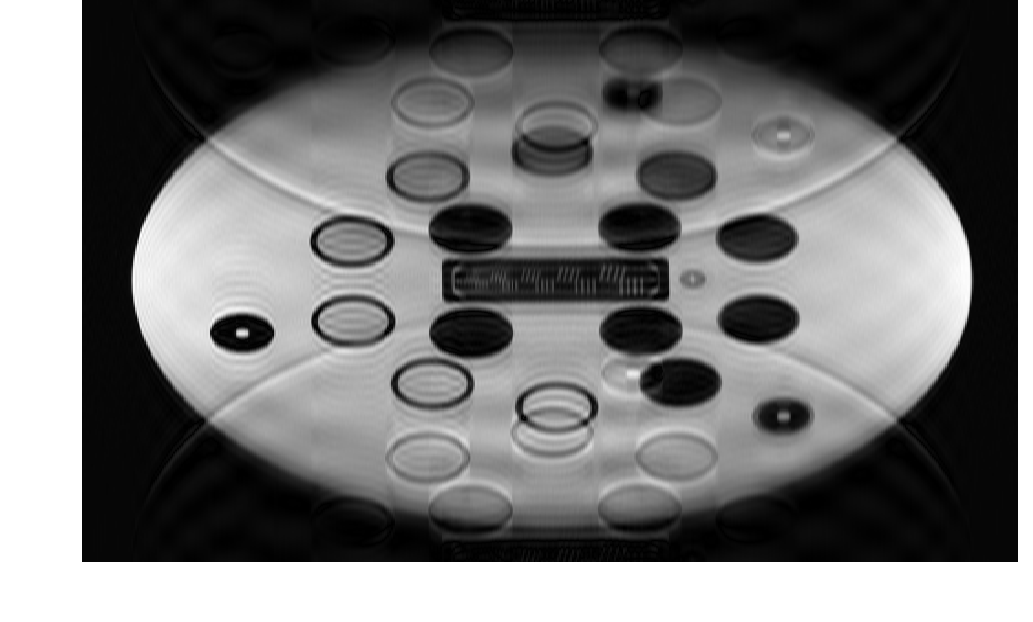} \\[1ex]
\hspace*{-5mm}
	\includegraphics[width=0.29\textwidth,height=0.29\textwidth]{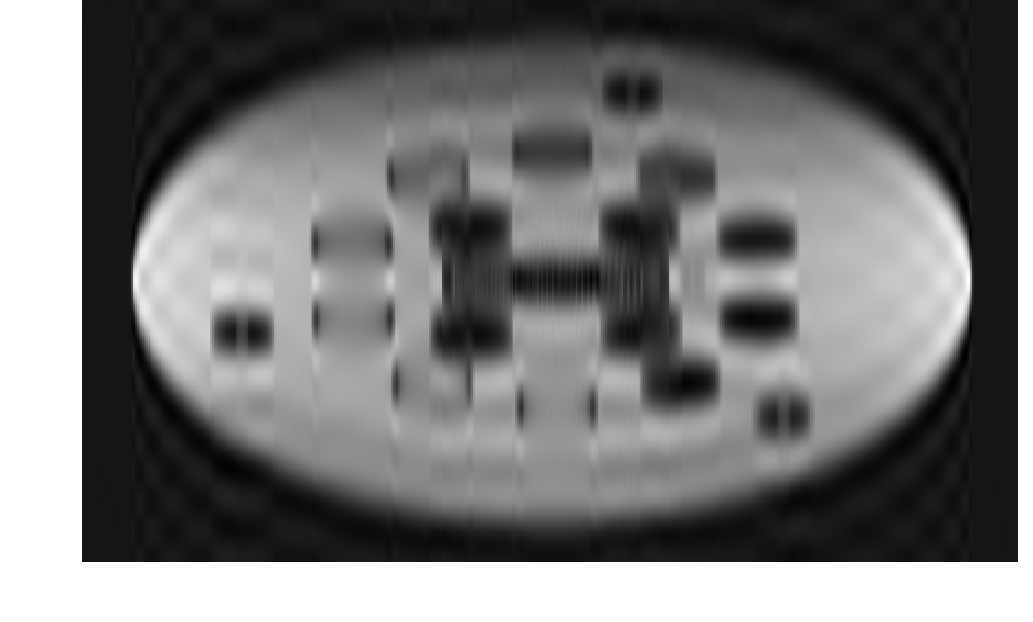}  
	\includegraphics[width=0.29\textwidth,height=0.29\textwidth]{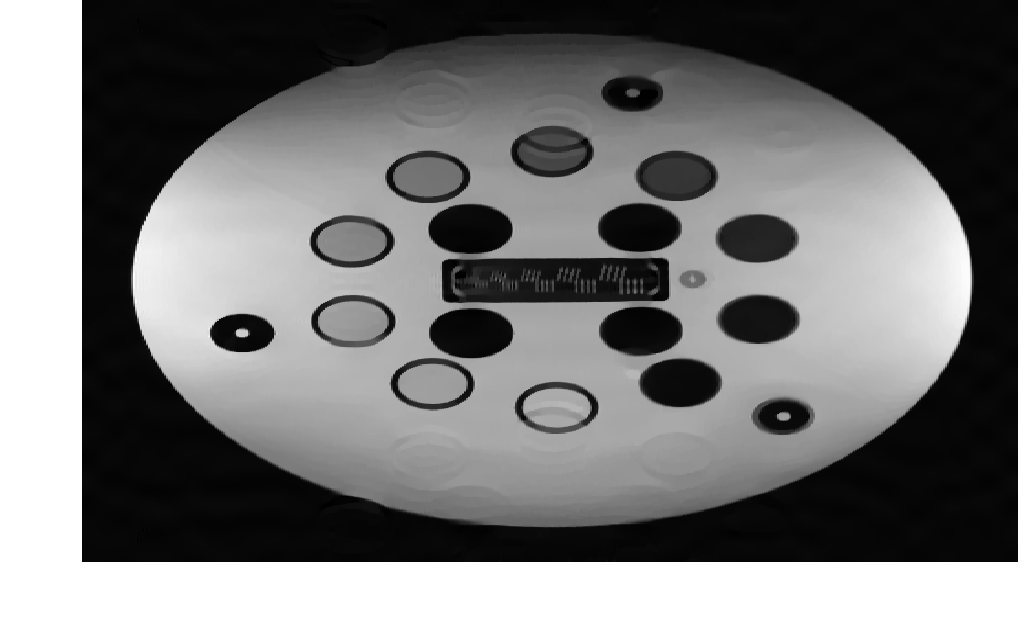}  
	\includegraphics[width=0.29\textwidth,height=0.29\textwidth]{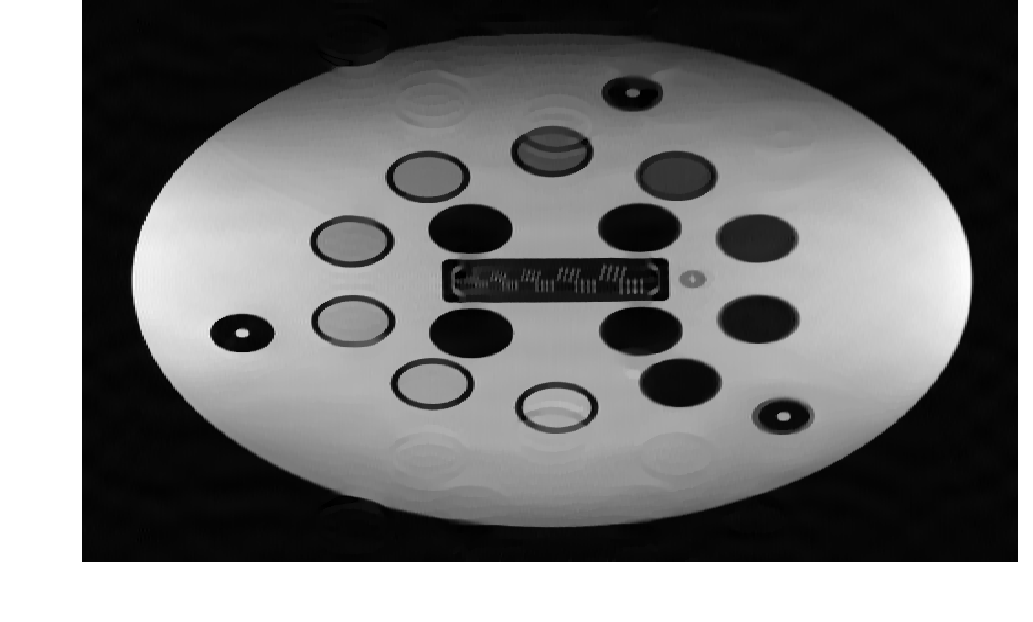}
	\end{center}
\caption{Original image and image reconstructions for reduction rate $r=8$ and $L=19$.
Top: Left: ``phantom'' original image $512 \times 512$;
 Middle:  zero refilling with PSNR $24.0342$; 
 Right: GRAPPA method with PSNR $25.2564$;
Bottom: Left: low-pass reconstruction from only $L=19$ rows with PSNR 21.2634; Middle: TV minimization with PSNR 32.2327; Right:  hybrid method
 with PSNR 32.2591, see Table \ref{tab3}.}
\label{figure4}
\end{figure}

\begin{figure}[htbp]
\begin{center}\hspace*{-5mm}
\includegraphics[width=0.21\textwidth,height=0.21\textwidth]{Figures1/phantom19_8hybrid.png}  
\includegraphics[width=0.21\textwidth,height=0.21\textwidth]{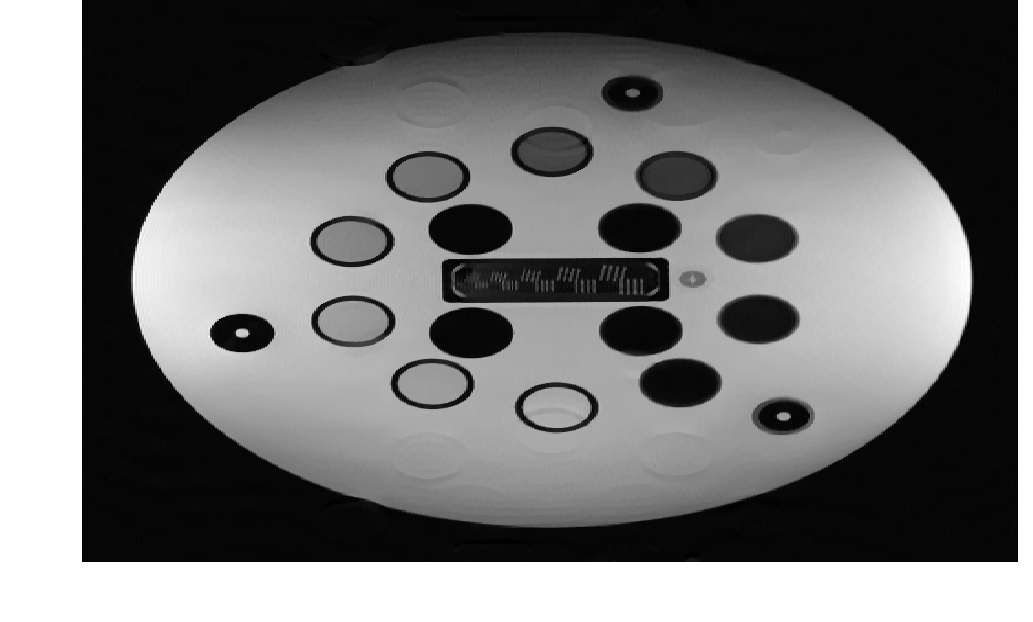} 
\includegraphics[width=0.21\textwidth,height=0.21\textwidth]{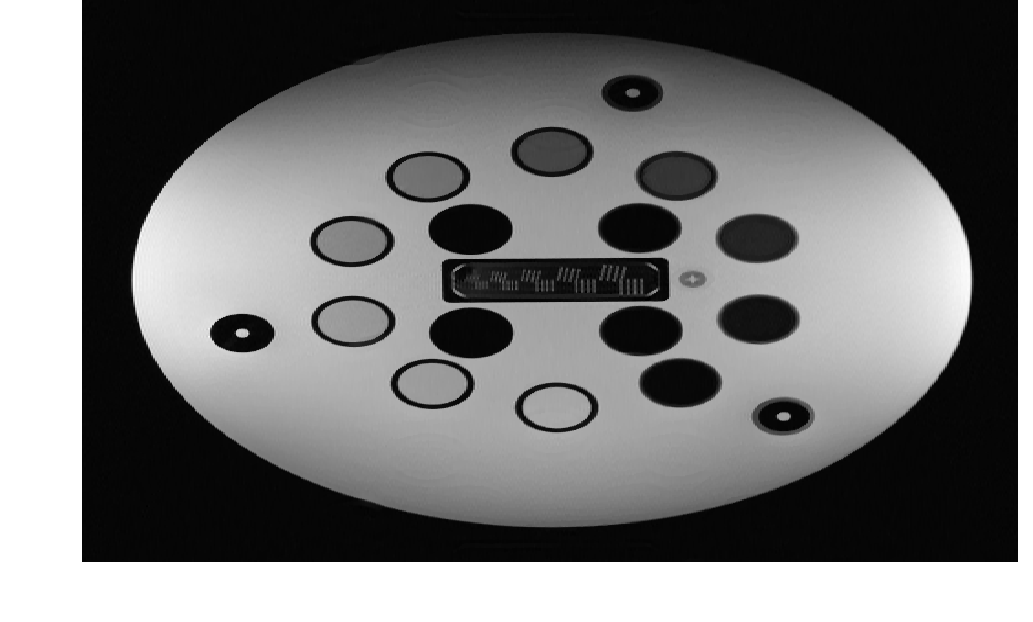} 
\includegraphics[width=0.21\textwidth,height=0.21\textwidth]{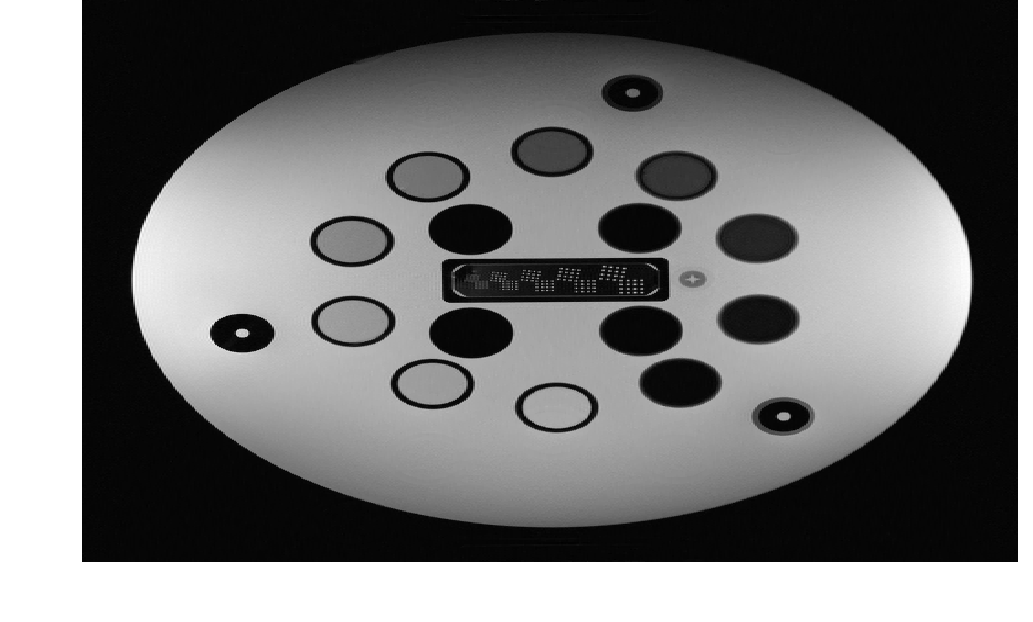}  
	\end{center} 
\caption{Best image reconstructions using the proposed hybrid method for the ``phantom'' image for different reduction rates.
From left to right: first: $r=8$, $L=19$;   second: $r=6$, $L=27$;  third: $r=4$, $L=63$; fourth: $r=2$, $L=83$.
}
\label{figure5}
\end{figure}

\paragraph{Generalizations of the sampling grid.}
Note that the TV-minimization Algorithm \ref{algTV} can be applied to every sampling pattern.
To show that the chosen sampling pattern (\ref{set}) is favourable, we compare our sampling set $\Lambda \times \Lambda_M$  with sampling sets $\Lambda^{(3)} \times \Lambda_M$ and $\Lambda^{(4)} \times \Lambda_M$, where beside the low-pass rows, only every third or only every fourth row of Fourier data is taken, until a total of $\lfloor N/r\rfloor$ of rows is reached, i.e., 
$$ \Lambda^{(3)} = \Lambda_K^{(3)} \cup \Lambda_L, \qquad  \Lambda^{(4)} = \Lambda_K^{(4)} \cup \Lambda_L, $$
where  $\Lambda_K^{(3)}= \{ -3\kappa+2 \ldots , -1, \, 2, \, 5 ,\,  8, \ldots , 3\kappa-1\}$ with $K=2\kappa$ indices,
and $\Lambda_K^{(4)}= \{ -4\kappa-1, \ldots , -1, \, 3, \, 7 , \, 11, \ldots , 4\kappa-1\}$ with  $K=2\kappa+1$ indices.

Algorithm  \ref{algTV} is applied here with $N_I=250$, 
$\tau=0.03$, $\theta=1.0$, $\sigma= 0.01+1/(8\tau)$ and $\lambda=500$. The results of this comparison, presented in Table \ref{tab4}, show that these other sampling strategies usually do not lead to better reconstruction results, compared to  $\Lambda \times \Lambda_M$ in (\ref{set}).


\begin{table}[htbp]
\scriptsize
\caption{Comparison of the reconstruction performance for incomplete Fourier data for the $512 \times 512$ ''cameraman'' 
image for different sampling sets (PSNR values)}
\begin{center}
\begin{tabular}{llccc}
\hline
reduction rate& low-pass width  &   $\Lambda \times \Lambda_M$ &  $\Lambda^{(3)} \times \Lambda_M$ &  $\Lambda^{(4)}\times \Lambda_M$\\
\hline
$r=4$ & $L=43$  & {\bf 33.6802}& 27.3641 & 30.0970  \\
$r=4$ &  $L=63$  & {\color{red}{\bf 34.6699}} & 29.3119 & 32.0558 \\
$r=4$ &  $L=83$  & {\bf 34.6659} & 33.4319 & 33.3220 \\
\hline
$r=6$ &  $L=35$ & {\bf 31.0799} & 29.5245 & 28.8486 \\
$r=6$ &  $L=43$  & {\color{red} {\bf 31.2923}} & 26.8227 & 29.9065 \\
$r=6$ &  $L=63$  &  30.8950 & 28.8150 & {\bf 31.1178} \\
\hline
$r=8$ & $ L=27$  & {\bf 29.2082} & 24.6907 & 27.6181 \\
$r=8$ & $ L=35$  & {\color{red} {\bf 29.3146}} & 28.8143 & 28.5050 \\
$r=8$ &  $L=43$  & 29.1544 & 26.7357 & {\bf 29.1909} \\
\hline
\end{tabular}
\end{center}
\label{tab4}
\end{table}

Finally, we remark that also  the Hybrid-Algorithm \ref{alghyp} can be generalized to other symmetric sampling schemes. 
Considering for example a symmetric sampling set defined by 
\begin{align}\label{set2}  (\Lambda_L \cup \Lambda_K^{(2)}) \times (\Lambda_L \cup \Lambda_K^{(2)}), 
\end{align}
where, beside the low-pass box of size $L \times L$, only every second data value from every second row and second column is taken.
Here $K$ is chosen such that the sampling set has (approximately) $\lfloor \frac{NM}{r} \rfloor$ elements for reduction rate $r$.  Generalizing  Algorithm \ref{alghyp}, we start with a symmetric smoothing filter in step 1.
Then we apply a symmetric scheme to compute the local total variation in step 2.
In step 3, we compute  two  weight matrices,  ${\mathbf W}_1$ as before by comparing the MTV values of the upper and the lower half of the image, and  ${\mathbf W}_2$ by comparing the MTV values of the left  and the right half of the image.  In step 4 we compute the two updates of the image obtained using ${\mathbf W}_1$ and ${\mathbf W}_2$ and take the average.

While this sampling pattern is of less interest in the MRI application, it can produce even better reconstruction results for suitable choices of the width of the low-pass box $L$, see Table \ref{tabneu}.
Algorithm  \ref{algTV} is applied here with $N_I=250$, 
$\tau=0.03$, $\theta=1.0$, $\sigma= 0.01+1/(8\tau)$ and $\lambda=1000$. The modified Algorithm  \ref{alghyp} is applied with one smoothing step (in both dimensions), $\mu=1.6$, $\epsilon=0.05$, and window size $\gamma_1=\gamma_3=3$.
The best reconstruction results are consistently obtained when only using the low-pass box, with 
Algorithm \ref{alghyp} providing the overall highest reconstruction quality.

\begin{table}[htbp]
\scriptsize
\caption{Reconstruction results for the  $512 \times 512$ ''cameraman'' image 
using incomplete Fourier data with the sampling set (\ref{set2}) (PSNR values)}
\begin{center}
\begin{tabular}{llcccc}
\hline
reduction rate& low-pass box width  &   low-pass  & zero refilling &   TV-minimization & hybrid\\
\hline
$r=4$ & $L=83$  &  25.1954 & 26.4749  & 30.7861 & \textbf{31.1797} \\
$r=4$ & $L=123$  & 28.4682 & 29.7242  & 33.3370 & \textbf{34.4250} \\
$r=4$ & $L=163$  & 31.8562 & 32.9807  & 34.9754 & \textbf{37.1528} \\
$r=4$ & $L=203$  & 35.1089 & 36.0386  & 35.9176 & \textbf{39.2978} \\
$r=4$ & $L=243$  & 38.0618 & 38.4462 & 36.2637 & {\color{red}\textbf{40.7206}} \\
\hline
$r=6$ & $L=83$  &  25.1954 & 26.4259  & 30.4327 & \textbf{31.0496} \\
$r=6$ & $L=123$ &   28.4682         &   29.6170     &  32.5612       &  \textbf{33.9776}\\
$r=6$ & $L=163$ &      31.8562             &  32.7321       &  33.6692    & \textbf{36.1276}\\
$r=6$ &  $L=203$ & 35.1089 & 35.3186 & 34.1665 & {\color{red}\textbf{37.2759}} \\
\hline
$r=8$ &  $L=83$  &   25.1954   & 26.3600 & 30.0377 &\textbf{30.7614} \\
$r=8$ &  $L=123$ & 28.4682  & 29.4712    &  31.7941   &\textbf{33.3062} \\
$r=8$ &  $L=179$  &   33.2193  & 33.2725  &  32.6592  &{\color{red}\textbf{35.1124}} \\
\hline
\end{tabular}
\end{center}
\label{tabneu}
\end{table}

\section{Conclusion}
Our study of image recovery from  structured  2D DFT data  and the numerical experiments provide several interesting insights.\\
The reconstruction results achieved with the presented methods strongly depend on the considered images.
Clearly, images containing many small details, which correspond to more information in  high Fourier frequencies can be reconstructed only with smaller accuracy. 
The newly  proposed  TV-Minimization Algorithm 
\ref{algTV} and the Hybrid Algorithm \ref{alghyp}  always lead to 
significantly  better reconstruction results than a low-pass reconstruction. \\
The width $L$ of the low-pass area in the sampling pattern (\ref{set})  plays an important role for  the reconstruction performance, where  the best choice of $L$  depends on the specific  image.
Cartoon like images (such as the ``phantom'' image)  can be better reconstructed taking a low-pass area with smaller width $L$, while the further band- and high-pass information provided by the  rows of Fourier data outside the low-pass area is very important.\\
 For the ''cameraman'' image, the Hybrid Algorithm \ref{alghyp} improves the PSNR result of the 
 TV minimization
significantly, while for the ``phantom'' image only slight improvements are obtained. 
The Hybrid Algorithm \ref{alghyp}, which can be seen as a post-processing method  of Algorithm \ref{algTV}, achieves very good reconstruction performance while using a rather low number of iteration steps for TV minimization. \\
The new insights  about suitable  sampling patterns  will be exploited to improve 
 current approaches for image recovery in parallel MRI, where undersampled $k$-space data of several coil images is available to reconstruct the desired magnetization image.

\subsection*{Acknowledgements}
We are grateful for several suggestions of the reviewers to improve this manuscript.
G. Plonka acknowledges funding from the European Union’s Horizon 2020 Research and Innovation Staff Exchange program under the MSCA grant agreement No.\ 101008231 (EXPOWER) and from German Research Foundation in the framework of the CRC 1456. A. Riahi acknowledges support from the German Research Foundation in the framework of the RTG 2088.

\medskip
\subsection*{Declarations}

\textbf{Data Availability.}  All used images are ``Standard''  test images and have been taken from the open source platform \texttt{https://www.imageprocessingplace.com/root\_files\_V3/ \break image\_databases.htm}.
The MATLAB software to reproduce the numerical results in this paper can be found at 
\texttt{https://na.math.uni-goettingen.de} under \texttt{software}.
\medskip

\noindent
\textbf{Conflict of Interest.} The authors declare no competing interests.

{\small
\bibliographystyle{plain}
\bibliography{references1}
}

\end{document}